%% file: main.tex
\documentclass[11pt]{article}
\usepackage[margin=1in]{geometry}
\usepackage{times}
\usepackage{natbib}

\input{math_commands.tex}

\usepackage[hidelinks]{hyperref}
\usepackage{url}
\usepackage{amssymb, amsthm, amsmath}
\usepackage{booktabs,tabularx,microtype,graphicx,float}

\title{Accelerated Algorithms for Stochastic Monotone Inclusions with Fixed Queries
}

\author{Sucheol Lee \\
AI Center \\
Samsung Electronics \\
Suwon, Republic of Korea \\
\texttt{sucheol0.lee@samsung.com} \\
\and
Donghwan Kim \\
Department of Mathematical Sciences \\
Korea Advanced Institute of Science and Technology \\
Daejeon, Republic of Korea \\
\texttt{donghwankim@kaist.ac.kr}
}

\date{}

\newtheorem{theorem}{Theorem}[section]
\newtheorem{lemma}[theorem]{Lemma}
\newtheorem{proposition}[theorem]{Proposition}
\newtheorem{corollary}[theorem]{Corollary}
\newtheorem{assumption}[theorem]{Assumption}
\theoremstyle{definition}

\theoremstyle{remark}
\newtheorem{remark}[theorem]{Remark}

\DeclareMathOperator{\dist}{dist}
\newcommand{\cH}{\mathcal H}

\newcommand{\Pp}{\mathbb P}

\newcommand{\barL}{\overline L}

\newcommand{\dkedit}[1]{{ #1}}
\newcommand{\scedit}[1]{{ #1}}

\begin{document}

\maketitle

\begin{abstract}
We study acceleration of stochastic first-order methods for monotone Lipschitz inclusions with a fixed number of oracle queries per iteration, measured by the expected squared residual.
Existing methods either have suboptimal oracle complexity, require an increasing number of oracle queries per iteration, or both.
%
\scedit{Under same-sample oracle access and mean-square Lipschitz stochastic noise,} we develop variance-reduced anchored forward-backward (VRAF),
an anytime composite method that
\dkedit{makes two oracle queries per iteration.} 
VRAF attains 
\scedit{the first log-free $O(1/\epsilon)$ complexity
under these oracle assumptions and hence the optimal $O(\sigma^2/\epsilon)$ variance dependence.}
This, however, leaves open
whether the optimal $O(1/\sqrt{\epsilon})$ deterministic term
can also be attained.
We establish 
an impossibility result
by extending the lower bound of \citet{foster2019complexity} to stochastic oracles permitting repeated queries to sampled stochastic operators, showing that $O(LD/\sqrt{\epsilon}+\sigma^2/\epsilon)$ oracle complexity is unattainable in general.
We therefore develop recentered regularized stochastic extragradient (RRSEG),
which attains the near-optimal oracle complexity previously achievable only with an increasing number of queries per iteration, while using a fixed number of queries.
\end{abstract}

\section{Introduction}\label{sec:introduction}
Finding equilibrium points and solving minimax problems are fundamental tasks in optimization and modern machine learning.
Representative examples arise in generative adversarial networks \citep{goodfellow2014generative, arjovsky2017wasserstein},
game theory \citep{rosen1965existence},
multi-agent learning \citep{littman1994markov, mazumdar2025tractable},
and recent approaches to LLM alignment \citep{munos2024nash, xu2025robust}.

In large-scale learning problems, the underlying operator is typically accessible only through a stochastic oracle,
motivating extensive efforts to develop efficient stochastic methods.
Yet our understanding remains limited even for basic monotone Lipschitz inclusions.
For the expected squared residual, a standard optimality measure in this setting,
the oracle complexity has a lower bound of $\Omega(\sigma^2/\epsilon)$ for achieving accuracy $\epsilon$  \citep{foster2019complexity},
where $\sigma^2$ bounds the stochastic oracle variance. However,
no stochastic method is known to attain the optimal 
dependence on $\sigma^2$.
E-Halpern and S-Dual-OHM 
have $O(\sigma^2\epsilon^{-3/2})$
stochastic terms,
while RAIN improves this to the near-optimal $\widetilde O(\sigma^2/\epsilon)$ term
\citep{cai2022stochastic,yoon2026direct,chen2024nearoptimal}.
These methods, however, require an increasing number of oracle queries per iteration as iterations proceed or as $\epsilon$ decreases.
This motivates our first question: \textit{Can the optimal $O(\sigma^2/\epsilon)$ 
stochastic term be attained with a fixed number of oracle queries per iteration?}

We answer this question affirmatively with variance-reduced anchored forward-backward (VRAF), an anytime fixed-query method for monotone Lipschitz inclusions.
VRAF makes exactly two oracle queries per iteration, 
both using the same sampled stochastic operator.
It achieves
$O (((L^2+L_\Delta^2)D^2+\sigma^2)/{\epsilon})$
oracle complexity,
\scedit{where $L$ is the Lipschitz constant of the operator,
$L_\Delta$ is the mean-square Lipschitz constant of the stochastic noise,}
and $D$ is the initial distance.
To our knowledge, this is the first \scedit{log-free} $O(1/\epsilon)$ oracle complexity for the expected squared residual of monotone Lipschitz inclusions
\scedit{under same-sample access and Lipschitz stochastic noise,} even among methods allowing an increasing number of oracle queries per iteration.

A worst-case example shows that the $O(L^2D^2/\epsilon)$ deterministic term of VRAF is order-tight. 
%
This leads to our second question: \textit{Can a different method retain the optimal 
dependence on $\sigma^2$
while also attaining the optimal $O(LD/\sqrt\epsilon)$ deterministic term?}
Such simultaneous optimality
is possible in stochastic convex minimization and, for gap functions, in stochastic variational inequalities \citep{lan2012optimal,juditsky2011solving}.
For the expected squared residual, the corresponding target is
$O (LD / \sqrt{\epsilon} + \sigma^2/\epsilon)$.
We establish 
an impossibility result
by extending the worst-case construction of \citet{foster2019complexity} to stochastic oracles that allow repeated queries to previously sampled stochastic operators, showing that this complexity is unattainable 
even for two-dimensional monotone Lipschitz equations satisfying $L_\Delta\le L$.

This impossibility
motivates our second method, recentered regularized stochastic extragradient (RRSEG).
RRSEG uses two independent oracle queries per iteration and achieves
$
\widetilde O (LD/\sqrt{\epsilon} + \sigma^2/\epsilon)
$
oracle complexity, 
with an optimal deterministic term and a near-optimal stochastic term.
\scedit{
More explicitly, RRSEG has an
$O(\sigma^2\log^3 N/N)$ stochastic term, matching the logarithmic
power in the near-optimal bound of RAIN, while using a fixed, rather than
increasing, number of oracle queries per iteration.
Corollary~\ref{lb:cor:fixed-query} rules out every logarithmic power
$p<1$ for fixed-query methods, leaving the range $1\le p<3$ open.
}

\section{Problem Setting}\label{sec:problem-setting}
Let $\cH$ be a real Hilbert space. 
We consider the monotone Lipschitz inclusion
\begin{align}
    0\in F(z_\star)+A(z_\star),
    \label{eq:problem-inclusion}
\end{align}
where $z_\star\in\cH$.
When $A=0$,~\eqref{eq:problem-inclusion} reduces to $F(z_\star)=0$,
which we call a monotone Lipschitz equation.

\begin{assumption}
\label{ass:problem}
For some $L>0$, the operator $F:\cH\to\cH$ is monotone and $L$-Lipschitz continuous, {\it i.e.,} for all $x,y\in\cH$,
\begin{align*}
    \langle F(x)-F(y),x-y\rangle&\ge0,
    &
    \|F(x)-F(y)\|&\le L\|x-y\|,
\end{align*}
and
$A:\cH\rightrightarrows\cH$ is maximally monotone. 
For a given initial point $z_0$,
there exists a solution $z_\star$ of \eqref{eq:problem-inclusion} such that
$\|z_0-z_\star\|\le D$ for some $D>0$.
\end{assumption}

We access $F$ 
only through its unbiased and bounded stochastic oracle $F_\xi$
and assume exact access to the resolvent $J_{\alpha A}:=(I+\alpha A)^{-1}$ 
for every $\alpha>0$.
\begin{assumption}[Unbiased and bounded stochastic oracle]\label{ass:stochastic-oracle}
For some $\sigma\ge0$ and every $z\in\cH$,
\begin{align*}
    F_\xi(z)&=F(z)+\delta_\xi(z),
    &
    \E[\delta_\xi(z)]&=0,
    &
    \E\|\delta_\xi(z)\|^2&\le\sigma^2.
\end{align*}
\end{assumption}

\scedit{
We also use the following additional oracle conditions where stated.

\begin{assumption}[Same-sample oracle access]\label{ass:same-sample}
For each sampled stochastic operator $F_\xi$, the algorithm may evaluate $F_\xi$ at multiple query points.
\end{assumption}
}

\begin{assumption}[Lipschitz stochastic noise]\label{ass:noise-lipschitz}
There exists $L_\Delta\ge0$ such that, for all $x,y\in\cH$,
\begin{align*}
    \E\|\delta_\xi(x)-\delta_\xi(y)\|^2
    \le L_\Delta^2\|x-y\|^2.
\end{align*}
\end{assumption}

\scedit{
Same-sample oracle access and mean-square Lipschitz continuity of the stochastic oracle are commonly used in stochastic variance reduction~\cite{cai2022stochastic,pethick2023solving}.
Together with the Lipschitz continuity of $F$, Assumption~\ref{ass:noise-lipschitz} is equivalent up to constants to the latter condition.%
\footnote{Under Assumption~\ref{ass:stochastic-oracle}, $\E\|F_\xi(x)-F_\xi(y)\|^2=\|F(x)-F(y)\|^2+\E\|\delta_\xi(x)-\delta_\xi(y)\|^2$.}
Assumptions~\ref{ass:problem}, \ref{ass:stochastic-oracle}, and \ref{ass:noise-lipschitz} therefore imply
\begin{align*}
    \E\|F_\xi(x)-F_\xi(y)\|^2
    \le \barL^2\|x-y\|^2,
\end{align*}
where $\barL^2:=L^2+L_\Delta^2$.
}


Fresh samples are independent.
\scedit{The access in Assumption~\ref{ass:same-sample}
is available when $\xi$ identifies, for example, a retained data sample or a replayable random seed, but not for a fresh-sample-only black-box oracle.}
Each evaluation $F_\xi(z)$ counts as one oracle query; resolvent evaluations are not counted as oracle queries.

%
\scedit{We call a method \textit{fixed-query} if there exists a constant $r$, depending only on the method, such that every iteration uses at most $r$ oracle queries, uniformly over all admissible problem instances, iteration indices, the total number of iterations (horizon), and the target accuracy.}

Define the residual 
$\mathcal R(z):=\dist\left(0,F(z)+A(z)\right)$.
We use the expected squared residual as our 
optimality measure
\citep{cai2022stochastic,pethick2023solving}:
\begin{align*}
\E[\mathcal R(z)^2].
\end{align*}
In the deterministic setting, the corresponding criterion is the squared tangent residual used by \citet{cai2024accelerated}.
When $A=0$, $\mathcal R(z)=\|F(z)\|$.
The oracle complexity is the number of stochastic oracle queries required to produce an output $\widehat z$ satisfying
\begin{align*}
    \E[\mathcal R(\widehat z)^2]\le\epsilon.
\end{align*}

\section{Related Work}\label{sec:related-work}

\paragraph{Deterministic methods.}
For monotone Lipschitz equations, extragradient and Popov's method have $O(L^2D^2/\epsilon)$ oracle complexity for squared residual accuracy $\epsilon$ \citep{korpelevich1976extragradient,popov1980modification,gorbunov2022extragradient,gorbunov2022lastiterate}.
Extragradient-type 
methods extend to monotone Lipschitz inclusions through forward-backward-forward splitting~\citep{tseng2000modified}.
Anchoring-only methods attain the same order,
and P-AGD extends the anchored forward-backward update to monotone Lipschitz inclusions \citep{ryu2020ode,surina2026improved,cai2026lastiterate}.
In these anchoring-only methods, the stepsize is diminishing. By contrast, combining anchoring with an extragradient or Popov-style optimistic correction 
can maintain a nonvanishing stepsize and attain
the optimal $O(LD/\sqrt{\epsilon})$ complexity; 
representative examples include
EAG, FEG, and their extensions \citep{yoon2021accelerated,lee2021fast,cai2024accelerated,alcala2023moving,tran-dinh2021halperntype,cai2023doubly}.
This distinction is relevant here 
because VRAF builds on an anchoring-only forward-backward structure.

\paragraph{Stochastic methods.}
Stochastic extensions of accelerated anchored methods, such as S-FEG and moving-anchor EAG-V, preserve the accelerated deterministic term under prescribed variance decay
but yield a suboptimal stochastic term
\citep{lee2021fast,alcala2025stochastic}. 
More generally, no existing stochastic method
for the expected squared residual attains the optimal $O(\sigma^2/\epsilon)$ term in the oracle complexity.
E-Halpern has $O(\epsilon^{-3/2})$ oracle complexity but uses an increasing number of oracle queries per iteration \citep{cai2022stochastic}.
S-Dual-OHM achieves the same order with a number of oracle queries per iteration that increases as $\epsilon$ decreases, but requires cocoercivity in expectation \citep{yoon2026direct}.
RAIN attains the near-optimal $\widetilde O(LD/\sqrt{\epsilon}+\sigma^2/\epsilon)$ oracle complexity for monotone Lipschitz problems, but is not fixed-query \citep{chen2024nearoptimal}.
Among fixed-query methods, BC-(P)SEG+,
stochastic GOMA, and a single-call
stochastic Halpern method have $O(\epsilon^{-2})$ oracle complexity
\citep{pethick2023solving,sohrabi2026accelerated,kim2026improving}.
VRAF 
attains the optimal $O(\sigma^2/\epsilon)$ term
with a fixed number of oracle queries per iteration.
Table~\ref{tab:stochastic-methods} summarizes the main comparisons.

\input{tables/stochastic_methods}


\paragraph{Deterministic and stochastic optimality.} 
In stochastic convex minimization, the optimal deterministic and stochastic terms can be attained together for function-value accuracy. For the expected squared residual considered here, however, the lower-bound construction of \citet{foster2019complexity} yields an additional logarithmic factor. Section~\ref{sec:acceleration-barrier} extends this construction to stochastic oracles that allow repeated queries to previously sampled stochastic operators, showing that $O(LD/\sqrt{\epsilon}+\sigma^2/\epsilon)$ oracle complexity is unattainable in general.

\paragraph{Recursive regularization.}
To approach the accelerated deterministic rate 
while retaining a near-optimal stochastic rate, 
RAIN employs recursive regularization. 
This technique was
introduced for stochastic convex optimization by \citet{allen-zhu2018how} and was later adapted to 
stochastic minimax optimization
by \citet{chen2024nearoptimal}, who proposed RAIN, 
and to differentially private stochastic variational inequalities
by \citet{bassily2024private}.
RAIN 
solves each regularized subproblem with a stochastic subroutine,
which leads to an increasing number of oracle queries per iteration.
RAIN
also considers a single-loop 
variant 
in its numerical experiments
(Algorithm~9 in \citet{chen2024nearoptimal}), but does not provide a convergence-rate guarantee for this variant.
Building on this single-loop structure,
we develop
RRSEG with horizon-dependent parameters and prove an
$O(L^2D^2/N^2+\sigma^2\log^3N/N)$ 
bound on the
expected squared residual 
using two oracle queries per iteration.

\section{VRAF: Optimal 
Variance Dependence
for Composite Inclusions}\label{sec:vraf}

We now develop variance-reduced anchored forward-backward (VRAF), an anytime fixed-query method for monotone Lipschitz inclusions that attains the optimal $O(\sigma^2/\epsilon)$
dependence on the variance parameter.

As discussed in Section~\ref{sec:related-work},
deterministic acceleration of anchored methods relies on an extragradient or optimistic correction that permits a nonvanishing stepsize. In existing stochastic extensions, the resulting stochastic error terms accumulate, and prescribed variance decay is used to control this accumulation while preserving the accelerated deterministic rate
\citep{lee2021fast,alcala2025stochastic}.
%
%
%
To avoid this additional source of error, VRAF instead uses an anchoring-only
forward-backward update with a diminishing stepsize
\citep{ryu2020ode,surina2026improved,cai2026lastiterate}, 
at the cost of an $O(1/N)$ deterministic term.

VRAF combines this update with
a STORM-type variance-reduced
recursive estimator
\citep{cutkosky2019momentumbased}.
\scedit{Under Assumption~\ref{ass:same-sample}, the variance reduction evaluates}
the same sampled stochastic operator at $z_k$ and $z_{k+1}$;
\scedit{Assumption~\ref{ass:noise-lipschitz} then gives
$\E\|\delta_{\xi_{k+1}}(z_{k+1}) - \delta_{\xi_{k+1}}(z_k)\|^2\le L_\Delta^2\|z_{k+1}-z_k\|^2$.}
This gives the following recursion.

For $\alpha_k>0$ and $\beta_k,\gamma_{k+1}\in(0,1]$, initialize $v_0=F_{\xi_0}(z_0)$ and iterate
\begin{align}
\begin{aligned}
 z_{k+1}
 &= J_{\alpha_kA}\bigl((1-\beta_k)z_k + \beta_kz_0 - \alpha_kv_k\bigr),\\
 v_{k+1}
 &= F_{\xi_{k+1}}(z_{k+1}) + (1-\gamma_{k+1})\bigl(v_k-F_{\xi_{k+1}}(z_k)\bigr).
\end{aligned}
\tag{VRAF}
\label{eq:vraf}
\end{align}

\begin{theorem}\label{thm:inclusion}
Under Assumptions~\ref{ass:problem}--\ref{ass:noise-lipschitz}, VRAF with
 $\alpha_0 = \frac7{12\barL}$,
 $\alpha_{k+1} = \frac{2(k+3)}{2k+7}\alpha_k$,
 $\beta_k = \frac3{k+3}$,
 and
 $\gamma_{k+1} = \frac{4k+13}{4(k+3)(k+4)}$
satisfies, for every integer $N\ge1$,
\begin{align*}
 \E[\mathcal R(z_N)^2]
 \le  35 \frac{\barL^2D^2+\sigma^2}{N+2}.
\end{align*}
\end{theorem}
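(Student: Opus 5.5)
The argument combines a Lyapunov (potential) analysis for the anchored forward-backward step with a STORM-type bound on the estimator error $e_k:=v_k-F(z_k)$, and closes the two by choosing weights that cancel the cross terms.

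\textbf{Step 1: inexact anchored step.} Write the resolvent step as the implicit relation
\[
z_{k+1}=(1-\beta_k)z_k+\beta_kz_0-\alpha_k\bigl(v_k+a_{k+1}\bigr),\qquad a_{k+1}\in A(z_{k+1}),
\]
and set the tangent residual
\[
\tilde g_{k+1}:=F(z_{k+1})+a_{k+1},
\]
so that $\mathcal R(z_{k+1})\le\|\tilde g_{k+1}\|$. Then
\[
z_{k+1}=(1-\beta_k)z_k+\beta_kz_0-\alpha_k\tilde g_{k+1}+\alpha_k\bigl(F(z_{k+1})-F(z_k)\bigr)-\alpha_ke_k .
\]
This is the deterministic anchored forward-backward recursion of \citet{cai2026lastiterate}/P-AGD, perturbed by the single error term $\alpha_k e_k$. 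I will use a potential of the form
\[
V_k=a_k\|\tilde g_k\|^2+b_k\langle \tilde g_k,z_k-z_0\rangle,
\]
with $a_k\propto k^2$ and $b_k\propto k$. The plan is to show
\[
V_{k+1}\le V_k + c_k\|e_k\|^2-\rho_k\|z_{k+1}-z_k\|^2 .
\]
The tools are monotonicity of $F+A$ applied between $z_k$ and $z_{k+1}$, the $L$-Lipschitz bound on $F$, and Young's inequality on the $e_k$ cross term. The negative term $-\rho_k\|z_{k+1}-z_k\|^2$ comes from the diminishing stepsize: the recursion for $\alpha_k$ keeps $\alpha_k\barL<1$, and in fact $\alpha_k$ decays like $k^{-1/2}$ relative to the constant-step regime. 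The lower bound $V_N\gtrsim N^2\|\tilde g_N\|^2-O(D^2)$ follows from monotonicity with $z_\star$, via $\langle\tilde g,z-z_\star\rangle\ge0$ and Cauchy–Schwarz.

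\textbf{Step 2: STORM error recursion.} Let $\mathcal F_k$ be the $\sigma$-field generated up to $z_{k+1}$; note that $z_{k+1}$ depends on $v_k$ but not on $\xi_{k+1}$. From the update,
\[
e_{k+1}=(1-\gamma_{k+1})e_k+\gamma_{k+1}\delta_{\xi_{k+1}}(z_{k+1})+(1-\gamma_{k+1})\bigl(\delta_{\xi_{k+1}}(z_{k+1})-\delta_{\xi_{k+1}}(z_k)\bigr).
\]
The new noise terms are conditionally mean zero, which gives
\[
\E\|e_{k+1}\|^2\le(1-\gamma_{k+1})^2\E\|e_k\|^2+2\gamma_{k+1}^2\sigma^2+2L_\Delta^2\E\|z_{k+1}-z_k\|^2 ,
\]
where the last term uses Assumption~\ref{ass:noise-lipschitz} together with same-sample access. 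The initial error satisfies $\E\|e_0\|^2\le\sigma^2$.

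\textbf{Step 3: combine.} Form $\Phi_k=\E V_k+w_k\E\|e_k\|^2$ with $w_k\propto k^3$ (or whatever order matches $c_k$). Choose $w_k$ so that two cancellations happen. First, the contraction factor $(1-\gamma_{k+1})^2$, with $\gamma_{k+1}\approx 1/k$, must absorb both $c_k$ and the growth of $w_k$; the specific rational forms of $\gamma_{k+1}$ and of the $\alpha_k$ recursion are presumably what make $w_{k+1}(1-\gamma_{k+1})^2+c_k\le w_k$ hold exactly. Second, $2w_{k+1}L_\Delta^2\le\rho_k$, which is where $\alpha_0=7/(12\barL)$ enters. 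The remaining source term is $\sum_k w_{k+1}\gamma_{k+1}^2\sigma^2\approx\sum_k k\,\sigma^2=O(N^2\sigma^2)$. Telescoping then gives
\[
N^2\,\E\|\tilde g_N\|^2\lesssim \barL^2D^2+N\sigma^2+\ldots,
\]
which is weaker than the target. So the weights must be rebalanced: $a_k\propto k$ (not $k^2$), consistent with the $O(1/N)$ deterministic rate of anchoring-only methods, and $w_k\propto k^2$. With these, the $\sigma^2$ source sums to $O(N\sigma^2)$ and dividing by $a_N\sim N$ yields $O\bigl((\barL^2D^2+\sigma^2)/N\bigr)$.

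\textbf{Main obstacle.} The hard part is verifying the exact per-step inequality for these non-standard parameter sequences with explicit constants, giving $35/(N+2)$. In particular, one must control the anchor inner product term $b_k\langle e_k,z_{k+1}-z_0\rangle$, which requires bounding $\|z_k-z_0\|\le 2D$-type quantities in expectation. I would first try to absorb it via Young's inequality into $\|e_k\|^2$ weighted by $w_k$ and into a $D^2$ term per step whose total is $O(D^2)$, then solve for the coefficient inequalities symbolically.
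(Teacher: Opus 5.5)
\textbf{There is a genuine gap, and it sits at the step you flag as the main obstacle.}

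Keeping the estimator error in a separate term $w_k\E\|e_k\|^2$ forces you to bound its correlation with the iterate, such as $b_k\langle e_k,z_{k+1}-z_0\rangle$. Young's inequality cannot do this here, for three reasons.
\begin{enumerate}
\item The correlation does not vanish in expectation. $z_{k+1}$ depends on $e_k$ through $-\alpha_kv_k$, and $e_k$ carries $(1-\gamma_k)e_{k-1}$, which is correlated with $z_k$.
\item There is no a priori bound $\|z_k-z_0\|\lesssim D$ for the stochastic iterates. Such a bound would itself have to come out of the potential argument.
\item Even granting one, the split $\frac{b_k^2}{2\varepsilon_k}\|e_k\|^2+\frac{\varepsilon_k}{2}\|z_{k+1}-z_0\|^2$ with $\sum_k\varepsilon_k=O(1)$ fails quantitatively. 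For $b_k\propto k$ as you propose, or even $b_k\propto\sqrt k$, Cauchy--Schwarz shows the accumulated contribution against $\E\|e_k\|^2\approx\sigma^2/k$ is at least of order $N^2\sigma^2$ up to $\barL$ factors. The normalization $a_N\propto N$ cannot absorb this.
\end{enumerate}

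Your rate bookkeeping does not close either. With $a_k\propto k$ and $w_k\propto k^2$, the source $\sum_k w_{k+1}\gamma_{k+1}^2\sigma^2$ is $\Theta(N\sigma^2)$. Dividing by $a_N\propto N$ gives $O(\sigma^2)$, not $O(\sigma^2/N)$. With the weight $w_k\propto k$ that matches $T_k^2$, a telescoping inequality $\Phi_{k+1}\le\Phi_k+(\cdots)$ leaves $\sum_k\sigma^2/(\barL^2k)$. That is precisely the $\log N$ factor the theorem avoids.

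\textbf{The missing idea} is to place the scaled error \emph{inside} the squared norms of the potential, so that error--iterate correlations are tracked exactly rather than bounded. With $T_k=\alpha_k/\beta_k$, $u_k=T_k(v_k-F(z_k))$, $p_k=T_{k-1}(F(z_k)+h_k)$ and $d=z_0-z_\star$, the paper uses
\begin{equation*}
V_k=\frac12\E\|z_k-z_0+u_k+p_k\|^2+\frac12\E\|u_k-d\|^2 .
\end{equation*}
The unusual $\gamma_{k+1}$ is chosen so that $(1-\gamma_{k+1})T_{k+1}/T_k=T_{k-1}/T_k=:q$. This turns the STORM update into the exact recursion $u_{k+1}=qu_k+\omega_{k+1}$ with $\E_k\omega_{k+1}=0$, and $q$ is the same factor that relates the scalings of $p_k$ and $p_{k+1}$.

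The only surviving error cross term is then $\langle u_k,d\rangle$ with $d$ deterministic, which vanishes because $\E u_k=0$. Set $t=k+3$ and add the two monotonicity inequalities and the Lipschitz inequality with multipliers $1-\frac{8}{5t}$, $\frac1{5t}$, $\frac{29q^2}{20}$. The one-step difference becomes a quadratic form in $(u_k,s,g,d)$, certified nonpositive by completing squares.

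This yields a contraction rather than a telescoping bound:
\begin{equation*}
V_{k+1}\le\Bigl(1-\frac{2}{5(k+3)}\Bigr)V_k+\frac{2}{5(k+3)}\cdot\frac9{10}\Bigl(D^2+\frac{\sigma^2}{\barL^2}\Bigr).
\end{equation*}
Since $\frac12\|d\|^2$ is part of $V_k$, the quantity $D^2+\sigma^2/\barL^2$ acts as a fixed point, so $V_k$ stays uniformly bounded with no logarithm. The theorem then follows from $\E[\mathcal R(z_N)^2]\le 4V_N/T_{N-1}^2$ and the Wallis-product bound $T_{N-1}^2>\frac{18}{175\barL^2}(N+2)$.
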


\begin{remark}\label{rem:vraf-stepsize-decay}
The VRAF stepsize satisfies $\alpha_k=\Theta\big(\frac1{\barL\sqrt{k}}\big)$,
while
$\beta_k=\Theta\big(\frac1k\big)$.
Indeed, since
$
 \sqrt{\frac{k+5/2}{k+7/2}}
 \le \frac{\alpha_{k+1}}{\alpha_k}
 =\frac{k+3}{k+7/2}
 \le \sqrt{\frac{k+3}{k+4}},
$
multiplying from $j=0,\ldots,k-1$ gives
$
 \frac{7}{12\barL}\sqrt{\frac{5}{2k+5}}
 \le \alpha_k
 \le \frac{7}{12\barL}\sqrt{\frac{3}{k+3}}.
$
\end{remark}

Each VRAF iteration uses two oracle queries.
Hence, Theorem~\ref{thm:inclusion} gives $O((\barL^2D^2+\sigma^2)/\epsilon)$ oracle complexity to achieve 
$\E[\mathcal R(z)^2]\leq\epsilon$.
The parameters depend only on $k$ and $\barL$, so VRAF is anytime.
We provide a proof sketch below;
the complete proof is given in Appendix~\ref{app:vraf-proof}.


\begin{proof}[Proof sketch]
For $k\ge 1$, let $h_k\in A(z_k)$ be the element selected by the resolvent step of VRAF, so that
\begin{align*}
 z_k+\alpha_{k-1}h_k
 =(1-\beta_{k-1})z_{k-1}+\beta_{k-1}z_0-\alpha_{k-1}v_{k-1}.
\end{align*}
Define the potential
\begin{align*}
 V_k:=&\frac12\E\left\|
 z_k-z_0
 +\frac{\alpha_k}{\beta_k}\bigl(v_k-F(z_k)\bigr)
 +\frac{\alpha_{k-1}}{\beta_{k-1}}\bigl(F(z_k)+h_k\bigr)
 \right\|^2\\
 &+\frac12\E\left\|
 \frac{\alpha_k}{\beta_k}\bigl(v_k-F(z_k)\bigr)
 -(z_0-z_\star)
 \right\|^2.
\end{align*}
The 
proof establishes
that, for $k\ge 1$,
\begin{align*}
 \E[\mathcal R(z_k)^2]
 & \le 4\left(\frac{\beta_{k-1}}{\alpha_{k-1}}\right)^2 V_k, \\
 V_{k+1}
 &\le \left(1-\frac{2}{5(k+3)}\right)V_k +\frac{2}{5(k+3)}\frac9{10} \left(D^2+\frac{\sigma^2}{\barL^2}\right),\\
 V_1
 &\le \frac9{10}\left(D^2+\frac{\sigma^2}{\barL^2}\right).
\end{align*}
The last two inequalities imply by induction that $V_k\le \frac9{10}\big(D^2+\frac{\sigma^2}{\barL^2}\big)$ for all $k\ge 1$. Combining this bound with the first inequality gives
\begin{align*}
\E[\mathcal R(z_k)^2] \le 4\left(\frac{\beta_{k-1}}{\alpha_{k-1}}\right)^2 \frac9{10}\left(D^2+\frac{\sigma^2}{\barL^2}\right).
\end{align*}
Finally, the parameter choices satisfy
$
 \big(\frac{\beta_{k-1}}{\alpha_{k-1}}\big)^2
 \le \frac{175\barL^2}{18(k+2)}.
$
Setting $k=N$ proves the theorem.
\end{proof}

The $O(L^2D^2/N)$ deterministic term in 
Theorem~\ref{thm:inclusion}
is order-tight for the VRAF.
This already holds for the one-dimensional linear operator $F(z)=Lz$ with $A=0$;
see Appendix~\ref{app:vraf-deterministic-tightness}.
Hence, 
the VRAF cannot attain the optimal $O(L^2D^2/N^2)$ deterministic rate.

\section{
Impossibility of achieving optimal deterministic and stochastic rates simultaneously}
\label{sec:acceleration-barrier}
Having established the optimal $O(\sigma^2/\epsilon)$ 
variance dependence with VRAF, we next ask whether 
it can be attained together
with the optimal $O(LD/\sqrt{\epsilon})$ deterministic rate.
We answer this question negatively with the following lower bound theorem.

\begin{theorem}[Impossibility of simultaneous optimality]\label{lb:thm:main}
Under \scedit{Assumptions~\ref{ass:problem}, \ref{ass:stochastic-oracle}, and \ref{ass:noise-lipschitz},}
even when restricted to two-dimensional monotone Lipschitz equations with $A=0$ and $L_\Delta\le L$,
it is impossible to uniformly guarantee
$\E[\mathcal R(\widehat z)^2]\le\epsilon$
using
$O\big(\frac{LD}{\sqrt{\epsilon}} + \frac{\sigma^2}{\epsilon}\big)$
oracle queries.
This impossibility holds even when previously sampled stochastic operators may be queried repeatedly.
\end{theorem}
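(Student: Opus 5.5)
We prove this by reduction to a stochastic lower bound carrying an extra logarithmic factor, following \citet{foster2019complexity}. We will show that there are constants $c>0$ such that, for every algorithm making $T$ oracle queries (with repeated queries to previously sampled operators allowed) and every $\sigma,L,D$, some two-dimensional instance satisfying Assumptions~\ref{ass:problem}, \ref{ass:stochastic-oracle}, \ref{ass:noise-lipschitz} with $A=0$ and $L_\Delta\le L$ forces
\begin{align*}
\E[\mathcal R(\widehat z)^2]\ \ge\ c\,\frac{\sigma^2\log(LDT/\sigma)}{T}
\end{align*}
whenever $T\gtrsim LD/\sigma$ (or a similar regime). Given this, suppose a method guaranteed accuracy $\epsilon$ with $T\le C(LD/\sqrt\epsilon+\sigma^2/\epsilon)$ queries. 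Fix $\sigma,L$ and let $D\to\infty$ with $\epsilon=\sigma^2 (LD/\sigma)^{-1}$, or any scaling where $\sigma^2/\epsilon$ dominates and $LD\sqrt\epsilon/\sigma^2\to\infty$ slowly. Then $T=O(\sigma^2/\epsilon)$, while the lower bound requires $T\gtrsim \sigma^2\log(\cdot)/\epsilon$ with a diverging logarithm, which is a contradiction.

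For the construction we take a scalar/rotational family of affine monotone operators. The natural choice is the Foster et al.\ hard family of $\mu$-strongly convex quadratics, transferred to equations via $F(z)=\nabla f(z)$. An alternative is a two-dimensional operator $F_\theta(z)=M(z-\theta)$ with $M$ positive semidefinite-plus-skew and $\|M\|\le L$. The stochastic oracle is $F_\xi(z)=F(z)+\delta_\xi$ with additive noise $\delta_\xi$ independent of $z$, for example Gaussian or Bernoulli noise of variance $\sigma^2$ encoding information about the unknown $\theta$. Because the noise is constant in $z$, we have $L_\Delta=0\le L$. More importantly, repeated queries of the same sampled $F_\xi$ at different points reveal nothing beyond $\delta_\xi$ once $F$ is known up to $\theta$, so each sample carries at most one observation's worth of information. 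This is the step that extends Foster et al.\ to same-sample access: we argue that the whole transcript is a measurable function of $\delta_{\xi_1},\dots,\delta_{\xi_m}$ plus internal randomness, with $m\le T$.

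The logarithm comes from the Foster et al.\ multi-scale argument. The solution $z_\star=\theta$ is hidden along a direction where the curvature is $\mu\ll L$, so the residual $\|F(\widehat z)\|^2\approx \mu^2\|\widehat z-\theta\|^2$. Estimating $\theta$ from $m$ noisy samples costs error $\sigma^2/(\mu^2 m)$ in squared distance, giving residual $\sim\sigma^2/m$. Superposing scales $\mu_j=2^{-j}L$, $j=1,\dots,\log(LD/\ldots)$, each with its own hidden parameter, forces the algorithm to pay $\sigma^2/T$ roughly per scale. Equivalently, one uses their Bayesian/Fano argument on the combined instance, which yields the $\log$ factor. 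In two dimensions this is achieved by making the hidden component vary with the scale while the Lipschitz and $D$ constraints are kept.

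The main obstacle is reproducing the logarithmic factor while staying in two dimensions and with a single monotone operator. Foster et al.'s bound may use dimension growing with the number of scales. I would first check whether their construction is inherently one-dimensional across scales, via a prior over a scalar curvature $\mu$ drawn log-uniformly so that the algorithm does not know which scale is active. The information argument is then a two-point or Bayesian estimation lower bound averaged over $\log$-many curvature values, and the second dimension absorbs a skew part or noise direction. The cleanest route would be to invoke their lower bound directly for $\nabla f$ of a scalar quadratic and verify that the same-sample extension is trivial with additive noise.
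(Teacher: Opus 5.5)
Your proposal has a genuine gap: the construction you propose cannot produce the logarithmic factor. With state-independent additive noise $F_\xi(z)=F(z)+\delta_\xi$ and same-sample access, two queries of one sample give $F_\xi(z_1)-F_\xi(z_2)=F(z_1)-F(z_2)$ exactly. The algorithm therefore learns $F$ up to the constant $F(z_0)$ without noise. In particular, an unknown curvature $\mu$ or matrix $M$, whether drawn log-uniformly or otherwise, is revealed by a couple of queries, and what remains is pure mean estimation. For your family $F_\theta(z)=M(z-\theta)$, averaging $T$ samples estimates $M\theta$ with mean-square error at most $\sigma^2/T$. Solving $M\widehat z=\widehat{M\theta}$ on the range of $M$ then gives $\E\|F(\widehat z)\|^2\le\sigma^2/T$, i.e.\ $O(\sigma^2/\epsilon)$ queries with no logarithm, and superposing curvature scales does not change this.

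The logarithm in \citet{foster2019complexity} does not come from a multi-scale argument; that is Allen-Zhu's recursive-regularization upper bound. It comes from a reduction from noisy binary search, and that is the missing idea. You need a family with \emph{state-dependent}, locally revealing noise:
\begin{itemize}
\item Take a piecewise-linear $F_Z$ on $M$ grid cells whose value at $x$ depends only on two neighbouring independent signs $Z_i,Z_{i+1}$, with bias $\pm1/(8M)$ that flips at the hidden index $j$.
\item Each newly observed entry carries $O(1/M^2)$ nats about $j$. Re-querying a sampled operator only reveals further independent entries of the same row, so same-sample access gains nothing.
\item Fano over $M$ hypotheses then forces $\Omega(M^2\log M)$ queries for identification. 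Residual accuracy $1/(256M^4)$ implies identification via Markov's inequality.
\end{itemize}
This noise has $L_\Delta=O(1)$, while the mean operator is only $1/(2M)$-Lipschitz. The second dimension is needed exactly to add a dummy coordinate $3u$, which raises $L$ to $3\ge L_\Delta$ without changing the information structure.

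Second, your intermediate bound $\E[\mathcal R(\widehat z)^2]\ge c\,\sigma^2\log(LDT/\sigma)/T$ for fixed $L,D,\sigma$ and $T\to\infty$ is false under the theorem's own hypotheses. With $L_\Delta\le L$ we have $\barL^2\le 2L^2$. Theorem~\ref{thm:inclusion} (VRAF, which uses same-sample access) then gives $O((L^2D^2+\sigma^2)/T)$ with no logarithm. So no contradiction can come from sending $\epsilon\to0$ with the other parameters fixed. It must come from a joint scaling with $LD/\sigma\to\infty$ while $LD/\sqrt\epsilon=O(\sigma^2/\epsilon)$.

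Your suggested scalings do not achieve this. With $\epsilon=\sigma^3/(LD)$, the deterministic term $(LD/\sigma)^{3/2}$ already exceeds $(LD/\sigma)\log(LD/\sigma)$. Your alternative, ``$\sigma^2/\epsilon$ dominates while $LD\sqrt\epsilon/\sigma^2\to\infty$'', is self-contradictory. The paper instead takes $L=3$, $D=1$, $\sigma^2=1/M^2$, and $\epsilon_M=1/(256M^4)$. Then $LD/\sqrt{\epsilon_M}+\sigma^2/\epsilon_M=304M^2=o(M^2\log M)$.
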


We provide a proof sketch below; the complete proof is given in Appendix~\ref{app:barrier-proof}.


\begin{proof}[Proof sketch]
Let $F_Z$ be a rescaled version of the worst-case family of \citet{foster2019complexity}.
Appendix~\ref{app:barrier-proof} extends this family so that $L=L_\Delta=3$, $D=1$, and $\sigma^2=\frac1{M^2}$ for $M\ge 16$.
At $\epsilon_M=\frac1{256M^4}$, by the lower bound construction of \citet{foster2019complexity} and this extension, achieving $\E[\mathcal R(\widehat z)^2]\le\epsilon_M$ requires $\Omega(M^2\log M)$ oracle queries, while $LD/\sqrt{\epsilon_M}+\sigma^2/\epsilon_M=304M^2$.
\end{proof}




For fixed-query methods, the $\Omega(M^2\log M)$ lower bound readily translates into a corresponding lower bound on the iteration count $N$
as below.

\begin{corollary}[
Necessity of a logarithmic factor for fixed-query methods]
\label{lb:cor:fixed-query}
For every $0\le p<1$, no fixed-query method can uniformly guarantee, for all integers $N\ge2$,
\begin{align*}
 \E[\mathcal R(\widehat z_N)^2]
 =O\left(
 \frac{L^2D^2}{N^2}
 +\frac{\sigma^2\log^p N}{N}
 \right).
\end{align*}
\end{corollary}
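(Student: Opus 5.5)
The plan is to argue by contradiction and reduce to the hard family behind Theorem~\ref{lb:thm:main}, whose construction (sketched after that theorem and carried out in Appendix~\ref{app:barrier-proof}) I take as given. Suppose some fixed-query method with per-iteration query bound $r$ and some $0\le p<1$ satisfy, for a constant $C$ and all $N\ge2$,
\begin{align*}
 \E[\mathcal R(\widehat z_N)^2]\le C\left(\frac{L^2D^2}{N^2}+\frac{\sigma^2\log^p N}{N}\right).
\end{align*}
Since running $N$ iterations costs at most $rN$ oracle queries, it suffices to exhibit, on the family with $L=L_\Delta=3$, $D=1$, $\sigma^2=1/M^2$, a choice of $N=N(M)$ that drives the right-hand side below $\epsilon_M=\frac1{256M^4}$ while using $rN=o(M^2\log M)$ queries. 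This would contradict the $\Omega(M^2\log M)$ query lower bound at accuracy $\epsilon_M$.

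To find such an $N$, I substitute the parameters into the bound. The deterministic term is $9C/N^2$, and the stochastic term is $C\log^p N/(M^2N)$. For the deterministic term to be at most $\epsilon_M/2$, I need $N\ge c_1M^2$. For the stochastic term, I need $N/\log^pN\ge 512\,C M^2$. I will take $N=\lceil K M^2\log^p M\rceil$ with $K$ a large constant. Then $\log N=O(\log M)$, so
\begin{align*}
 \frac{N}{\log^p N}\ \ge\ \frac{K M^2\log^pM}{(c\log M)^p}\ =\ \frac{K}{c^p}\,M^2,
\end{align*}
which beats $512CM^2$ once $K$ is chosen large enough. The deterministic condition also holds for $M\ge16$ and large $K$. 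With this choice the method reaches $\E[\mathcal R(\widehat z_N)^2]\le\epsilon_M$ using $rN=O(M^2\log^pM)$ queries. Because $p<1$, this is $o(M^2\log M)$, and for $M$ large enough it falls below the lower bound, which is the contradiction.

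The main subtlety is making sure the lower bound really applies to the output $\widehat z_N$ of a fixed-query method. Here I have to check two things. First, the $\Omega(M^2\log M)$ bound must hold for any algorithm, including randomized ones and ones that reuse samples, as long as the total number of queries is bounded; Theorem~\ref{lb:thm:main} explicitly permits repeated queries, so fixed-query methods qualify. Second, the hidden constants in that bound must not depend on the algorithm, so that the big-$O$ constant $C$ can be absorbed. Both points should follow from the appendix construction.
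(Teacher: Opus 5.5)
Your proposal is correct and follows the paper's proof essentially step for step. You assume the rate with constant $C$ and query bound $r$, apply it to the lifted hard family ($L=3$, $D=1$, $\sigma^2=M^{-2}$, $\epsilon_M=(256M^4)^{-1}$) with $N_M=\lceil KM^2(\log M)^p\rceil$, and use $\log N_M=O(\log M)$ to push the bound below $\epsilon_M$; the contradiction with the $\Omega(M^2\log M)$ lower bound then follows because $rN_M=o(M^2\log M)$. One small point should be stated: the constant $c$ in $\log N\le c\log M$ may depend on $K$. This is harmless, since $c=O(\log K)$ gives $K/c^p\to\infty$, or alternatively one takes $M$ large enough that $\log N_M\le 3\log M$.
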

The proof is given in Appendix~\ref{app:barrier-proof}.
\scedit{
Corollary~\ref{lb:cor:fixed-query} leaves the range $1\le p<3$ open:
it rules out every logarithmic power $p<1$, whereas the near-optimal
bounds of both RAIN and RRSEG have stochastic logarithmic power $p=3$;
RRSEG achieves this dependence with a fixed number of oracle queries per iteration.
}

\section{RRSEG: 
Optimal deterministic and near-optimal stochastic rates
}\label{sec:rrseg}
\input{sections/rrseg_fbf_main.tex}

\section{Experiments}\label{sec:experiments}
\input{sections/experiments_main.tex}

\section{Discussion}

\scedit{
The main open question left by Theorem~\ref{lb:thm:main} is the tradeoff between the deterministic and stochastic terms.
In particular, it remains open whether the exact $O(\sigma^2/N)$ stochastic term can be attained while improving the deterministic term  $O(1/N)$,
and whether
\scedit{the logarithmic power in the
$O(\sigma^2\log^3N/N)$ stochastic term 
can be reduced toward the lower-bound threshold while preserving the optimal
$O(1/N^2)$ deterministic term under fixed-query access.}

Our guarantees also rely on monotonicity.
Although deterministic acceleration extends to negative comonotonicity
\citep{lee2021fast,gorbunov2023convergence},
corresponding stochastic guarantees are unknown;
the RPS experiment therefore lies outside our theory.

Finally, residual convergence does not imply pointwise convergence.
Appendix~\ref{app:further-discussion} gives a counterexample for VRAF, and the RRSEG guarantees do not address pointwise convergence.



}

\section{Conclusion}

We studied fixed-query acceleration for stochastic monotone Lipschitz inclusions under the expected squared residual criterion. 
VRAF is the first method to attain the optimal $O(\sigma^2/\epsilon)$ variance dependence,
even among methods allowing an increasing number of oracle queries per iteration, while itself using only a fixed number of queries. 
We also established a lower bound showing that the optimal deterministic and stochastic rates cannot, in general, be attained simultaneously, even when $L_\Delta\le L$ and
repeated queries to previously sampled stochastic operators are allowed.
RRSEG attains the near-optimal $\widetilde O(LD/\sqrt{\epsilon}+\sigma^2/\epsilon)$ oracle complexity previously achieved with increasing 
oracle queries, while using only a fixed number of oracle queries per iteration.

\newpage

\subsection*{AI use statement}
In this work, we used generative AI tools to assist with algorithmic exploration,
mathematical analysis and proof development, numerical experiments,
literature search, code development, translation, and manuscript editing.
The research objectives and major methodological choices were determined by the authors.
All AI-assisted work was reviewed and independently checked by the authors,
who take responsibility for the final content of this work.

\subsection*{Reproducibility statement}
The appendix contains complete proofs of the VRAF, RRSEG,
and pointwise results, together with the lower-bound transfer.
For the experiments, the appendix specifies the exact problems, schedules, tuning procedures, and evaluation protocol.



\bibliography{references}
\bibliographystyle{plainnat}

\appendix

\section{Further related work}\label{app:further-related}
\input{appendices/further_related_work.tex}

\subsection{RAIN under the expected squared residual}\label{app:rain-squared-residual}
\input{appendices/rain.tex}


\section{Complete VRAF Proof}\label{app:vraf-proof}
\input{appendices/vraf_full_proof.tex}

\section{Proofs for Impossibility of Simultaneous Optimality}\label{app:barrier-proof}
\input{appendices/acceleration_barrier_proof.tex}

\section{Complete RRSEG proof}\label{app:rrseg-proof}
\input{appendices/rrseg_fbf_proof.tex}

\section{Experimental details}\label{app:experiments}
\input{appendices/experiments_full.tex}

\section{Further discussion}\label{app:further-discussion}
\input{appendices/discussion_full.tex}

\end{document}

%% file: math_commands.tex
\usepackage{amsmath,amsfonts,bm}

\def\eqref#1{equation~\ref{#1}}

\def\1{\bm{1}}

\DeclareMathAlphabet{\mathsfit}{\encodingdefault}{\sfdefault}{m}{sl}
\SetMathAlphabet{\mathsfit}{bold}{\encodingdefault}{\sfdefault}{bx}{n}

\newcommand{\E}{\mathbb{E}}

\newcommand{\R}{\mathbb{R}}



%% file: tables/stochastic_methods.tex
\begin{table}[b!]
\caption{
Oracle complexity for $\E[\mathcal R(z)^2]\le\epsilon$.
The Composite column indicates whether the guarantee applies to
$A\neq0$; 
$\triangle$ denotes constrained problems only.
In the Output column,
Terminal denotes a nonrandomized terminal output; 
for RRSEG, this is the corrected output $\widehat{z}_N$ in Theorem~\ref{rr:thm:main}.
The notation $\widetilde O$ hides logarithmic factors.
${}^\ast$
Appendix~\ref{app:rain-squared-residual} derives the displayed expected-squared-residual complexity from the proof of \citet{chen2024nearoptimal},
whose theorem is stated for the expected residual.
${}^\dagger$The prescribed variance decay is translated into oracle complexity using independent increasing batches under Assumption~\ref{ass:stochastic-oracle}.
\scedit{VRAF requires Assumption~\ref{ass:same-sample} and~\ref{ass:noise-lipschitz};}
RRSEG 
requires neither.
}
\label{tab:stochastic-methods}
\begin{center}
\begin{tabular}{lcccc}
\toprule
Method & Fixed-query & Output & Oracle complexity & Composite \\
\midrule
Moving-anchor EAG-V & & Terminal & $O\left(\frac{LD}{\sqrt{\epsilon}}+\frac{\sigma^2L^3D^3}{\epsilon^{5/2}}\right){}^\dagger$ & \\
S-FEG & & Terminal & $O\left(\frac{LD}{\sqrt{\epsilon}}+\frac{\sigma^2L^2D^2}{\epsilon^2}\right){}^\dagger$ & \\
BC-(P)SEG+ & $\checkmark$ & Randomized & $\widetilde O\left(\frac{\barL^4D^4+\sigma^4}{\epsilon^2}\right)$ & $\checkmark$ \\
Stochastic GOMA & $\checkmark$ & Terminal & $O\left(\frac{L^4D^4+\sigma^4}{\epsilon^2}\right)$ & \\
E-Halpern & & Terminal & $O\left(\frac{\barL^3D^3+\sigma^2\barL D}{\epsilon^{3/2}}\right)$ & $\triangle$ \\
RAIN & & Randomized & $\widetilde O\left(\frac{LD}{\sqrt{\epsilon}}+\frac{\sigma^2}{\epsilon}\right){}^\ast$ & \\
\midrule
VRAF (ours) & $\checkmark$ & Terminal & $O\left(\frac{\barL^2D^2+\sigma^2}{\epsilon}\right)$ & $\checkmark$ \\
RRSEG (ours) & $\checkmark$ & Terminal & $\widetilde O\left(\frac{LD}{\sqrt{\epsilon}}+\frac{\sigma^2}{\epsilon}\right)$ & \checkmark \\
\bottomrule
\end{tabular}
\begin{minipage}{0.98\linewidth}
\end{minipage}
\end{center}
\end{table}

%% file: sections/rrseg_fbf_main.tex
The impossibility result
shows that $O(LD/\sqrt{\epsilon}+\sigma^2/\epsilon)$ oracle complexity is unattainable in general.
We therefore ask whether a fixed-query method can retain the optimal deterministic rate 
while losing only logarithmic factors in the stochastic rate.

RAIN achieved the near-optimal rate using recursive regularization \citep{chen2024nearoptimal}.
Let $H_k(z):=F(z)+a_k(z-c_k)$ and let $r_k$ be the unique zero of $A+H_k$, with $c_0=z_0$.
Since $\mathcal R(r_0)\le a_0D$, choosing a small $a_0$ gives a small residual for the original inclusion but also a small strong-monotonicity constant.\footnote{An operator $H$ is $\mu$-strongly monotone if $\langle H(x)-H(y),x-y\rangle\ge \mu\|x-y\|^2$ for all $x,y$.}
If $r_k$ were available, adding $\lambda_k(z-r_k)$ to $H_k$ would increase the strong-monotonicity constant while preserving 
$r_k$ as the zero of the regularized inclusion.

However, RAIN implements
this recursive regularization by solving each regularized subproblem with a stochastic subroutine \citep{chen2024nearoptimal}.
\citet{chen2024nearoptimal} also consider a single-loop variant with one stochastic extragradient step per update, but without providing a convergence rate analysis.
Building on this single-loop structure, RRSEG
applies one stochastic forward-backward-forward step to each regularized inclusion and then recenters the added regularization at the resulting iterate. 
When $A=0$, this step reduces to stochastic extragradient. 
With horizon-dependent parameters, RRSEG
retains the optimal deterministic rate while 
incurring
only logarithmic factors in the stochastic rate.
Unlike VRAF, 
its
analysis uses only Assumptions~\ref{ass:problem} and \ref{ass:stochastic-oracle};
\scedit{in particular, it requires neither Assumption~\ref{ass:same-sample} nor Assumption~\ref{ass:noise-lipschitz}.}

Since $r_k$ is unavailable, RRSEG replaces it with the one-step approximation $z_{k+1}$:
\begin{align*}
 H_{k+1}(z)=H_k(z)+\lambda_k(z-z_{k+1}).
\end{align*}
This gives the following single-loop recursion.

For $a_0>0$, $c_0=z_0$, $\eta_k>0$, and $\lambda_k\ge0$, draw independent samples $\xi_k$ and $\zeta_k$ and iterate
\begin{align}
\begin{aligned}
 y_k&=J_{\eta_k A}\bigl(z_k-\eta_k\bigl(F_{\xi_k}(z_k)+a_k(z_k-c_k)\bigr)\bigr),\\
 z_{k+1}&=y_k-\eta_k\bigl(F_{\zeta_k}(y_k)+a_k(y_k-c_k)-F_{\xi_k}(z_k)-a_k(z_k-c_k)\bigr),\\
 a_{k+1}&=a_k+\lambda_k,\\
 c_{k+1}&=\frac{a_kc_k+\lambda_kz_{k+1}}{a_{k+1}}.
\end{aligned}
\tag{RRSEG}
\label{rr:eq:rrseg}
\end{align}
For $a_0\in(0,L)$ and $0<\gamma\le\frac13$, define
$Q_{N,\gamma}(a_0):=\frac{\frac{\gamma N}{3}-(L/a_0-1)}{\log(L/a_0)}$.
The theorem below gives the exact finite-horizon bound. In particular, the concrete parameter choices
in Corollary~\ref{rr:cor:explicit} yields
the simplified bound $O\bigl(\frac{L^2D^2}{N^2}+\frac{\sigma^2\log^3 N}{N}\bigr)$.

\begin{theorem}[RRSEG]\label{rr:thm:main}
Under Assumptions~\ref{ass:problem} and~\ref{ass:stochastic-oracle}, fix an integer $N\ge2$, $a_0\in(0,L)$, and $0<\gamma\le\frac13$ such that $Q_{N,\gamma}(a_0)\ge1$.
Choose the RRSEG parameters as
$\eta_k=\frac1{3(L+Q_{N,\gamma}(a_0)a_k)}$,
$a_{k+1}=\min\{L,\frac{a_k}{1-\gamma\eta_ka_k}\}$, and $\lambda_k:=a_{k+1}-a_k$.
Then $a_N=L$.
Let $m:=\lfloor\min\{Q_{N,\gamma}(a_0),N\}\rfloor$,
$\overline F_N:=\frac1m\sum_{k=N-m}^{N-1}F_{\xi_k}(z_k)$,
and define the output
$\widehat z_N:=J_{\frac1L A}\big(z_N-\frac1L\overline F_N\big)$.
Then RRSEG satisfies
\begin{align}
 \left(\E[\mathcal R(\widehat z_N)^2]\right)^{1/2}
 \le&\sqrt2LD\left[
 2\frac{a_0}{L}
 +\left(\sqrt2-1+\frac{77}{32}\right)
 \left(\frac{a_0}{L}\right)^2
 \right]
 +\frac{\sqrt2\sigma}{\sqrt m}\notag\\
 &+\sigma\sqrt{\frac{5}{2(2-3\gamma)(Q_{N,\gamma}(a_0)+1)}}
 \left(\log\frac{L}{a_0}+\sqrt2+\frac94\right).
 \label{rr:eq:rms-main}
\end{align}
\end{theorem}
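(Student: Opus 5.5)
We first verify that $a_N=L$. While $a_k<L$, the update gives $\frac{1}{a_{k+1}}=\frac1{a_k}-\gamma\eta_k$, so $\frac1{a_k}$ decreases by $\gamma\eta_k=\frac{\gamma}{3(L+Q a_k)}$ at each step, where $Q:=Q_{N,\gamma}(a_0)$. We split the run into dyadic phases $a_k\in[2^{-j-1}L,2^{-j}L)$. Within a phase, $1/a$ must drop by $O(2^{j}/L)$ and each step removes at least $\frac{\gamma}{3(L+Q 2^{-j}L)}$. Hence a phase lasts at most about $\frac{3}{\gamma}\bigl(2^j+Q\bigr)$ steps. Summing over the $\log_2(L/a_0)$ phases, or more sharply integrating $\frac{3}{\gamma}\bigl(L\,d(1/a)+Q\,da/a\bigr)$, the total is at most $\frac3\gamma\bigl(L/a_0-1+Q\log(L/a_0)\bigr)=N$ by the definition of $Q$. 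So the cap at $L$ is reached by step $N$.

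Next, we analyze a single step. Since $H_k(z)=F(z)+a_k(z-c_k)$ is $a_k$-strongly monotone and $(L+a_k)$-Lipschitz, and the stepsize satisfies $\eta_k\le\frac1{3(L+a_k)}$, the standard stochastic FBF (Tseng) analysis applied to the zero $r_k$ of $A+H_k$ gives
\[
\E\|z_{k+1}-r_k\|^2\le(1-c\,\eta_ka_k)\,\E\|z_k-r_k\|^2+C\eta_k^2\sigma^2.
\]
Recentering $H_{k+1}=H_k+\lambda_k(\cdot-z_{k+1})$ moves the zero by $\|r_{k+1}-r_k\|\le\frac{\lambda_k}{a_{k+1}}\|z_{k+1}-r_k\|$, which follows from resolvent nonexpansiveness and strong monotonicity. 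Because $\lambda_k/a_{k+1}=\gamma\eta_ka_k$, the distance $e_k:=\E\|z_k-r_k\|^2$ obeys
\[
e_{k+1}\le(1+\gamma\eta_ka_k)^2(1-c\eta_ka_k)e_k+C\eta_k^2\sigma^2 .
\]
With $\gamma\le\frac13$ the contraction survives, giving roughly $(1-(2-3\gamma)\eta_ka_k/\cdot)$, which is the source of the $2-3\gamma$ factor. Unrolling this recursion gives a deterministic part that is geometrically killed and a noise part of order $\sigma^2\sum\eta_k^2\prod(\cdot)$.

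To bound $\mathcal R(\widehat z_N)$, we use the identity $r_N=$ zero of $A+F+L(\cdot-c_N)$. For the bias, $c_N$ is a convex combination of $z_0$ and the iterates, and we compare it with the exact recursive regularization à la Allen-Zhu/RAIN: $\|r_0-z_\star\|\le D$ and $\mathcal R(r_0)\le a_0D$, which yields the $LD\,(a_0/L)$ and $(a_0/L)^2$ terms. For the noise, we expand $\mathcal R(\widehat z_N)$ through the forward-backward step with stepsize $1/L$. Its error consists of $L\|z_N-r_N\|$ plus the deviation of $\overline F_N$ from $F(z_N)$. The averaging over the last $m$ samples gives $\sigma/\sqrt m$ (the samples are independent), and the drift $\|F(z_k)-F(z_N)\|$ over the last $m\le Q$ steps is controlled by the per-step distance bounds, since $\eta_k\approx\frac1{3QL}$ near the end. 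The accumulated error $L^2e_N$ becomes $\sigma^2\cdot\frac{\log^2(L/a_0)}{Q}$ after summing the per-phase contributions: each dyadic phase contributes $O(\sigma^2/(QL^2))$ to $e_N$ relative to its scale, the $\lambda$-drift amplifies this across phases, and this produces the $(\log(L/a_0)+\text{const})$ factor inside the square root.

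The main obstacle is this noise accounting. Each recentering shifts the target $r_k$ by an amount proportional to the current error, so noise injected early, at small $a_k$ and large $\eta_k$, propagates through many recenterings. We would handle this by tracking instead the root-mean-square $\sqrt{e_k}$ weighted by $a_k$. We would try to prove $a_k\sqrt{e_k}\le\text{bias}_k+\sigma\sqrt{\tfrac{5}{2(2-3\gamma)(Q+1)}}\,(\log\tfrac{a_k}{a_0}+\text{const})$ by induction, using Minkowski's inequality across steps so that the errors add linearly in $\log a_k$ rather than quadratically. Matching the explicit constants $\sqrt2-1+\frac{77}{32}$ and $\frac94$ would then require careful bookkeeping at the final cap step and at the output step.
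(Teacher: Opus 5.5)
Your argument for $a_N=L$ is fine; it is the paper's traversal argument (the potential $\frac3\gamma\bigl(Q\log a-L/a\bigr)$ increases by at least one per unclipped step, via $-\log(1-x)\ge x$). The tracking step, however, has a genuine gap.

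You transport the distance through recentering by the triangle inequality, $\|z_{k+1}-r_{k+1}\|\le(1+\lambda_k/a_{k+1})\|z_{k+1}-r_k\|$, and this is too lossy. Before clipping, $a_k/a_{k+1}=1-\gamma x_k$ with $x_k:=\eta_ka_k$. So an envelope proportional to $a_k^{-p}$ shrinks by $(1-\gamma x_k)^p\approx 1-p\gamma x_k$ per step. Your recursion maintains it only if $(1+\gamma x_k)^2(1-\frac43x_k)\le(1-\gamma x_k)^p$, i.e.\ $(p+2)\gamma\le\frac43$ to first order.

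At $\gamma=\frac13$, which the theorem allows and the corollary uses, this forces $p\le2$. The quartic deterministic envelope is then lost; with $p=2$ the sum $\sum_k\lambda_k a_0/a_{k+1}$ puts a factor $\log(L/a_0)$ on the $a_0D$ term. For the noise envelope $E/a_k^2$, the first-order margin $(\frac43-4\gamma)x_k$ vanishes at $\gamma=\frac13$, so $E$ cannot carry the factor $1/(Q+1)$ at all. For $\gamma<\frac13$ you would get $1/(1-3\gamma)$, not $1/(2-3\gamma)$.

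The missing fact is that recentering toward $z_{k+1}$ never increases the distance. Subtracting the root equations gives $\lambda_k(z_{k+1}-r_k)=F(r_{k+1})+h_{k+1}-F(r_k)-h_k+a_{k+1}(r_{k+1}-r_k)$. Monotonicity then yields $a_{k+1}\|r_{k+1}-r_k\|^2\le\lambda_k\langle z_{k+1}-r_k,r_{k+1}-r_k\rangle$, and hence $\|z_{k+1}-r_{k+1}\|\le\|z_{k+1}-r_k\|$. With this, the FBF contraction $1-\frac43x_k$ only has to beat $1-p\gamma x_k$ (Bernoulli with $p\gamma\le\frac43$, so $p=4$ at $\gamma=\frac13$). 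The noise envelope closes because $a_k^2/a_{k+1}^2-(1-\frac43x_k)\ge(\frac43-2\gamma)x_k$; that, not root drift, is where $2-3\gamma$ comes from.

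The second gap is the residual of $r_N$ itself. You treat $\|F(r_N)+h_N\|=L\|r_N-c_N\|$ as a deterministic bias and put the logarithm into $e_N$. But $c_N$ is built from the noisy iterates, and in the correct accounting $e_N$ carries no logarithm: $L^2e_N\le a_0^4D^2/L^2+\frac{5\sigma^2}{4(2-3\gamma)(Q+1)}$.

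What you need is a transport inequality for residuals. With $g_k:=F(r_k)+h_k+a_0(r_k-z_0)$ one has $g_0=0$, $\|g_{k+1}-g_k\|\le\lambda_k\|z_{k+1}-r_k\|$, and $\langle g_N,r_N-r_0\rangle\ge a_0\|r_N-r_0\|^2$. Together these give $\|F(r_N)+h_N\|\le a_0D+\sum_k\lambda_k\|z_{k+1}-r_k\|$. Inserting the two envelopes and using $\sum_k\lambda_k/a_{k+1}^2\le\frac1{a_0}-\frac1L$ and $\sum_k\lambda_k/a_{k+1}\le\log\frac L{a_0}$ produces exactly the $2a_0$ coefficient and the $\log\frac L{a_0}$ factor.

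Minkowski is legitimate in this outer sum because each term already carries $1/\sqrt{Q+1}$. Your plan to apply it across steps inside the tracking recursion would instead add the per-step noise $\sqrt{5/2}\,\eta_k\sigma$ linearly. That gives $a_N\sqrt{e_N}\lesssim\sigma\sum_kx_k\approx\frac\sigma\gamma\log\frac L{a_0}$, with no decay in $Q$; conditionally centered noise must be accumulated in mean square.

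The same caution applies to the suffix drift in $\overline F_N$. Summing per-step displacements gives roughly $m\eta_k\sigma\approx\sigma/(3L)$. Comparing $z_k$ with $r_k$ and $r_k$ with $r_N$ (using $a_{N-m}>\frac89L$) gives the required $O(\sigma/(L\sqrt{Q}))$.
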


The average $\overline F_N$ uses oracle evaluations already generated by RRSEG, so computing $\widehat z_N$ requires no additional stochastic oracle queries.

We provide a proof sketch below; the complete proof is given in Appendix~\ref{app:rrseg-proof}.

\begin{proof}[Proof sketch]
For each $k$, let $r_k$ be the zero of $A+H_k$.
The proof establishes the following bounds on the distances from the iterates to these zeros:
\begin{align*}
 \E\|z_k-r_k\|^2
 &\le\left(\frac{a_0}{a_k}\right)^4D^2
 +\frac{5\sigma^2}{4(2-3\gamma)(Q_{N,\gamma}(a_0)+1)a_k^2},\\
 \E\|z_{k+1}-r_k\|^2
 &\le\left(\frac{a_0}{a_{k+1}}\right)^4D^2
 +\frac{5\sigma^2}{4(2-3\gamma)(Q_{N,\gamma}(a_0)+1)a_{k+1}^2}.
\end{align*}
With $p_N:=J_{\frac1L A}\left(z_N-\frac1L F(z_N)\right)$, the recentering relation and firm nonexpansiveness of the resolvent further give
\begin{align*}
 L\left(\E\|z_N-p_N\|^2\right)^{1/2}
 \le a_0D
 +\sum_{k=0}^{N-1}\lambda_k\left(\E\|z_{k+1}-r_k\|^2\right)^{1/2}
 +\sqrt2L\left(\E\|z_N-r_N\|^2\right)^{1/2}.
\end{align*}
The choice of $Q_{N,\gamma}(a_0)$ gives $a_N=L$.
Then, using $\sqrt{u+v}\le\sqrt u+\sqrt v$, the above inequalities give
\begin{align}
 &L\left(\E\|z_N-p_N\|^2\right)^{1/2}\notag\\
 &\le\left(a_0+a_0^2\sum_{k=0}^{N-1}\frac{\lambda_k}{a_{k+1}^2}+\sqrt2\frac{a_0^2}{L}\right)D
 +\sigma\sqrt{\frac{5}{4(2-3\gamma)(Q_{N,\gamma}(a_0)+1)}}
 \left(\sum_{k=0}^{N-1}\frac{\lambda_k}{a_{k+1}}+\sqrt2\right)\notag\\
 &\le\left(2a_0+(\sqrt2-1)\frac{a_0^2}{L}\right)D
 +\sigma\sqrt{\frac{5}{4(2-3\gamma)(Q_{N,\gamma}(a_0)+1)}}
 \left(\log\frac{L}{a_0}+\sqrt2\right), \label{rr:eq:pN-bound}
\end{align}
where the last inequality follows from
\begin{align*}
 \sum_{k=0}^{N-1}\frac{\lambda_k}{a_{k+1}^2}
 &\le\int_{a_0}^{L}\frac{da}{a^2}=\frac1{a_0}-\frac1L,
 &
 \sum_{k=0}^{N-1}\frac{\lambda_k}{a_{k+1}}
 &\le\int_{a_0}^{L}\frac{da}{a}=\log\frac{L}{a_0}.
\end{align*}
The proof also gives
\begin{align*}
 \left(\E\|\overline F_N-F(z_N)\|^2\right)^{1/2}
 \le\frac{77}{32}\frac{a_0^2}{L}D+\frac{\sigma}{\sqrt m}
 +\frac94\sigma\sqrt{\frac{5}{4(2-3\gamma)(Q_{N,\gamma}(a_0)+1)}}.
\end{align*}
Finally,
\begin{align*}
 \left(\E[\mathcal R(\widehat z_N)^2]\right)^{1/2}
 \le\sqrt2L\left(\E\|z_N-p_N\|^2\right)^{1/2}
 +\sqrt2\left(\E\|\overline F_N-F(z_N)\|^2\right)^{1/2}.
\end{align*}
Substituting the preceding bounds gives \eqref{rr:eq:rms-main}; the complete calculation is given in Appendix~\ref{app:rrseg-proof}.
\end{proof}

\begin{corollary}[RRSEG with concrete parameter choices]\label{rr:cor:explicit}
Under the assumptions of Theorem~\ref{rr:thm:main}, let $N\ge19$ and set
$\gamma=\frac13$, $a_0=\frac{18L}{N}$,
$Q_N=\frac{N/18+1}{\log(N/18)}$, and
$m_N=\lfloor\min\{Q_N,N\}\rfloor$.
Then $Q_N>1$, and RRSEG 
satisfies
\begin{align*}
 \left(\E[\mathcal R(\widehat z_N)^2]\right)^{1/2}
 \le&\frac{36\sqrt2LD}{N}
 +\frac{324\sqrt2LD}{N^2}\left(\sqrt2-1+\frac{77}{32}\right)
 +\frac{\sqrt2\sigma}{\sqrt{m_N}}\\
 &+\sigma\sqrt{\frac{5}{2(Q_N+1)}}
 \left(\log\frac{N}{18}+\sqrt2+\frac94\right).
\end{align*}
Consequently,
$\E[\mathcal R(\widehat z_N)^2]
=O\big(\frac{L^2D^2}{N^2}+\frac{\sigma^2\log^3N}{N}\big)$.
\end{corollary}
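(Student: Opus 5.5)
The corollary is obtained by substituting the concrete parameters into Theorem~\ref{rr:thm:main}. We verify its hypotheses, evaluate each term of \eqref{rr:eq:rms-main}, and then square and read off the asymptotic order.

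\textbf{Hypotheses.} Set $\gamma=\frac13$ and $a_0=\frac{18L}{N}$. Since $N\ge19$, we have $a_0\in(0,L)$, and $L/a_0=N/18>1$, so $\log(L/a_0)>0$. Substituting into the definition,
\begin{align*}
 Q_{N,1/3}(a_0)=\frac{\frac{N}{9}-\bigl(\frac{N}{18}-1\bigr)}{\log(N/18)}=\frac{N/18+1}{\log(N/18)}=Q_N .
\end{align*}
To show $Q_N>1$, put $t=N/18>1$. Then $Q_N>1$ is equivalent to $t+1>\log t$, which holds for all $t>0$. Hence every hypothesis of Theorem~\ref{rr:thm:main} holds, and $m=m_N$.

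\textbf{Substitution.} With $a_0/L=18/N$, the first bracket of \eqref{rr:eq:rms-main} becomes
\begin{align*}
 \sqrt2LD\Bigl[\tfrac{36}{N}+\bigl(\sqrt2-1+\tfrac{77}{32}\bigr)\tfrac{324}{N^2}\Bigr],
\end{align*}
which gives the first two displayed terms. The term $\sqrt2\sigma/\sqrt m$ is unchanged. In the last term, $2-3\gamma=1$, so it becomes $\sigma\sqrt{5/(2(Q_N+1))}\bigl(\log\frac N{18}+\sqrt2+\frac94\bigr)$. Together these give the displayed inequality.

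\textbf{Asymptotic order.} This is the only step requiring care, and it mainly concerns $m_N$. Since $\log(N/18)<N/18+1$, we have $Q_N\le N$ in the relevant range, so $m_N=\lfloor Q_N\rfloor\ge Q_N/2$ because $Q_N>1$. Also $Q_N\ge \frac{N}{18\log N}$ for $N\ge19$. Consequently
\begin{align*}
 \frac{\sigma^2}{m_N}=O\Bigl(\frac{\sigma^2\log N}{N}\Bigr),
 \qquad
 \frac{\sigma^2\log^2 N}{Q_N+1}=O\Bigl(\frac{\sigma^2\log^3 N}{N}\Bigr).
\end{align*}
Note that $\log(N/18)+\sqrt2+\frac94=O(\log N)$ for $N\ge19$, since $\log N\ge\log 19>0$. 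Applying $(\sum_{i=1}^4 u_i)^2\le 4\sum_{i=1}^4 u_i^2$ to the square of the displayed bound gives
\begin{align*}
 \E[\mathcal R(\widehat z_N)^2]=O\Bigl(\frac{L^2D^2}{N^2}+\frac{\sigma^2\log^3N}{N}\Bigr),
\end{align*}
because the $LD/N^2$ term is dominated by the $LD/N$ term.
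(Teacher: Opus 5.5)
Your proposal is correct and follows the paper's proof: you substitute $\gamma=\frac13$ and $a_0=\frac{18L}{N}$ into Theorem~\ref{rr:thm:main}, verify $Q_N>1$ from $\log t<t+1$ with $t=N/18>1$, and read off the order from $Q_N,m_N=\Theta(N/\log N)$. One small slip: $\log(N/18)<N/18+1$ does not imply $Q_N\le N$ (in fact $Q_{19}\approx 38>19$), but this is harmless, because when $Q_N>N$ you have $m_N=N$, so $m_N\ge\min\{Q_N/2,N\}=\Omega(N/\log N)$ in every case.
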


\begin{proof}
For $N\ge19$, the displayed choice satisfies $a_0\in(0,L)$ and $Q_N>1$; Appendix~\ref{app:rrseg-proof} gives the elementary verification of the latter inequality.
Substituting $\gamma=\frac13$ and $a_0=\frac{18L}{N}$ into Theorem~\ref{rr:thm:main} gives the upper bound and the stated order.
\end{proof}

Corollary~\ref{rr:cor:explicit} is consistent with Theorem~\ref{lb:thm:main}, since the stochastic term is optimal only up to logarithmic factors.
\scedit{
Repeating RRSEG from the same initial point with doubling horizons 
yields an anytime method without changing the asymptotic rate; see Appendix~\ref{app:rrseg-anytime}.}

When $A=0$, $p_N=z_N-\frac1L F(z_N)$ and hence $L(z_N-p_N) = F(z_N)$.
Therefore, \eqref{rr:eq:pN-bound} directly bounds $(\E\|F(z_N)\|^2)^{1/2}$,
so the last iterate $z_N$ attains the same oracle-complexity order without the final output correction.

Appendix~\ref{app:rrseg-schedule} illustrates the horizon-dependent schedule used in Corollary~\ref{rr:cor:explicit} and in the experiments.



%% file: sections/experiments_main.tex
Figure~\ref{fig:main-experiments} compares VRAF (Section~\ref{sec:vraf}), RRSEG (Section~\ref{sec:rrseg}), BC-(P)SEG+ \citep{pethick2023solving}, stochastic GOMA \citep{sohrabi2026accelerated}, RAIN-SL \citep{chen2024nearoptimal}, and E-Halpern \citep{cai2022stochastic}.
Problem~3 has $A\neq0$ but is not a constrained problem, so only VRAF, RRSEG, and BC-PSEG+ are evaluated.
Each uses at most
$10^7$ oracle queries. 
%
%
VRAF and RRSEG use their theoretical parameters
without empirical tuning.
VRAF (practical) 
replaces $\alpha_k$ by $4\alpha_k$;
it is heuristic, and Appendix~\ref{app:vraf-stepsize-sensitivity} compares multipliers $1,2,4,$ and $8$.
RAIN-SL is the tuned Algorithm~9 of \citet{chen2024nearoptimal}.
E-Halpern is calibrated through its target error $\epsilon$.
Appendix~\ref{app:experiments} gives the remaining details.

\begin{figure}[!ht]
 \centering
 \includegraphics[width=.9\linewidth]{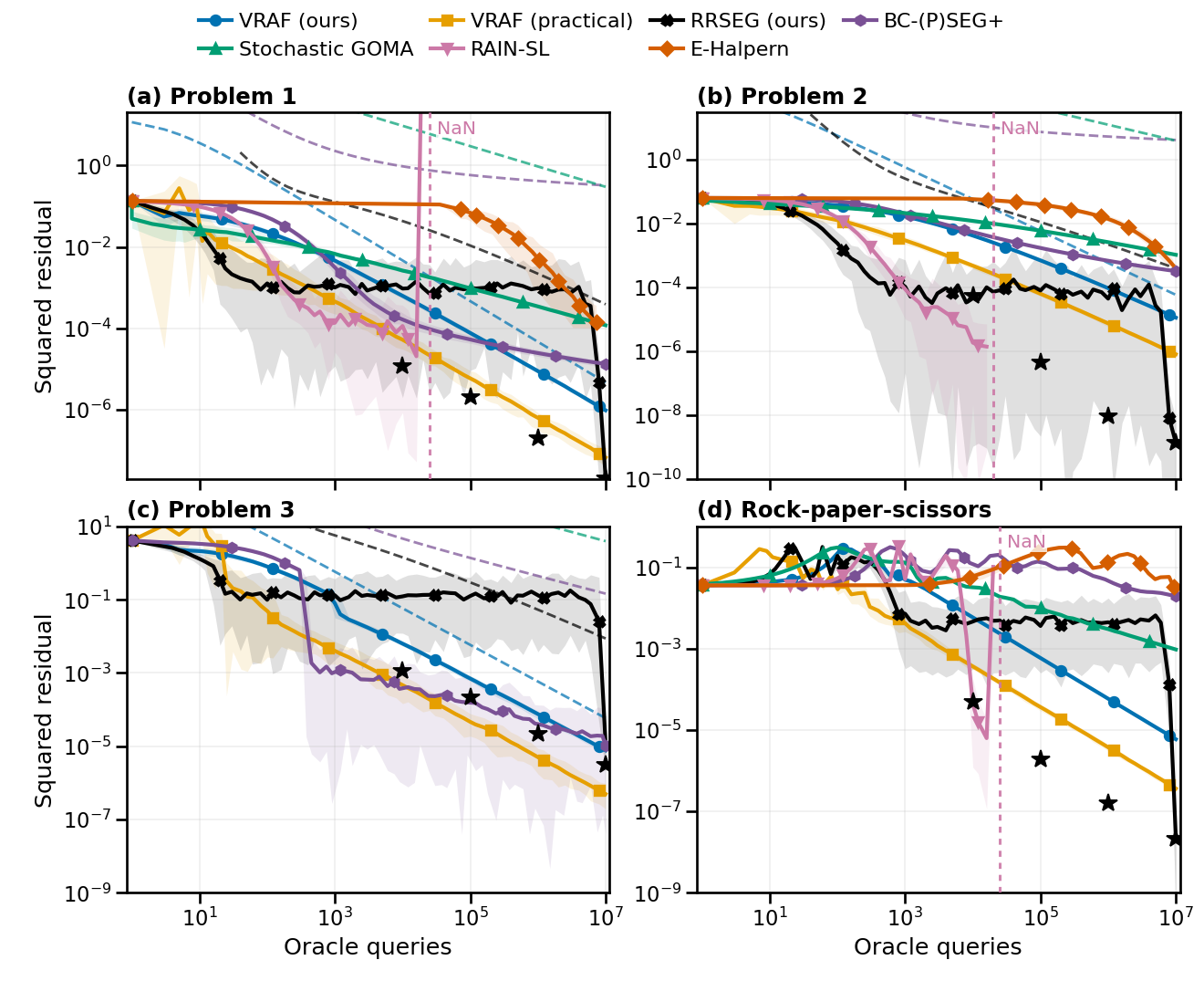}
 \caption{Squared residual versus the number of oracle queries.
 Solid curves show the mean over $50$ independent runs, and shaded regions show the pointwise $2.5\%$--$97.5\%$ quantiles.
 Dashed curves show the theoretical bounds described in the text.
 \scedit{RRSEG solid curves show intermediate outputs from a single horizon-dependent run, whereas stars show terminal outputs from separate horizon-matched runs.}}
 \label{fig:main-experiments}
\end{figure}

A single RRSEG run is horizon-dependent, and E-Halpern uses a prescribed target accuracy.
Their solid curves show the corresponding intermediate iterates.
For RRSEG, stars show terminal outputs from separate horizon-matched runs; on Problem~3, they show the corrected outputs $\widehat{z}_N$.
The RRSEG dashed curve is horizon-matched, using \eqref{rr:eq:equation-bound} on Problems~1 and~2 and Corollary~\ref{rr:cor:explicit} on Problem~3.
For RAIN-SL, NaN marks the first plotted query count at which at least one of the $50$ runs 
becomes nonfinite.

Problems~1-3 are monotone, with $A\neq0$ in Problem~3.
For rock-paper-scissors (RPS), we parameterize the mixed strategies by softmax logits.
The resulting operator
is nonmonotone, so we include this problem
to examine the behavior of the methods outside the monotone setting.
Appendix~\ref{app:experiment-problems} gives the exact problems.

The RRSEG curves decrease most rapidly near their terminal horizons.
RAIN-SL decreases rapidly at early query counts, but at least one of its $50$ runs becomes nonfinite on every problem.
For RPS, softmax saturation can make the residual in the logit variables small without making the saddle-point gap small.
Appendix~\ref{app:rps-gap} therefore also reports the mixed-strategy saddle-point gap,
which decreases together with the squared residual for RRSEG and both VRAF variants.

%% file: appendices/further_related_work.tex

\paragraph{Gap lower bounds.}
First-order lower complexity bounds are known for convex-concave bilinear saddle-point problems under saddle-point criteria \citep{ouyang2021lower}.
For smooth convex-concave saddle-point problems, \citet{golowich2020last} prove an $O(N^{-1/2})$ last-iterate rate for extragradient and matching lower bounds for stationary linear iterative methods, for criteria including the restricted primal-dual gap and the Hamiltonian.
These criteria differ from the tangent residual used in this paper, so these results do not imply the lower bound in Section~\ref{sec:acceleration-barrier}.

\paragraph{Beyond monotonicity.}
Several first-order methods retain convergence guarantees under assumptions weaker than monotonicity.
EG+ is analyzed for structured nonconvex-nonconcave problems under a weak Minty-type condition \citep{diakonikolas2021efficient}, and \citet{pethick2022escaping} extend extragradient-type methods to constrained and regularized weak-Minty problems.
FEG attains accelerated residual convergence under negative comonotonicity \citep{lee2021fast}, while proximal-point, extragradient, and optimistic methods have also been analyzed under negative comonotonicity \citep{gorbunov2023convergence}.
Optimistic dual extrapolation and semi-anchored schemes provide further guarantees for coherent nonmonotone variational inequalities and structured nonconvex-nonconcave composite minimax problems \citep{song2020optimistic,lee2022semianchored}.

\paragraph{Implicit and strong-convergence viewpoints.}
Anchoring is closely connected to Halpern iteration and proximal-point methods.
Halpern-based methods give near-optimal guarantees for monotone inclusions \citep{diakonikolas2020halpern}, while accelerated proximal-point and Halpern-type splitting methods give fast residual guarantees for maximally monotone operators and monotone inclusions \citep{kim2021accelerated,tran-dinh2021halperntype}.
For nonexpansive and contractive fixed-point problems, \citet{park2022exact} derive exact optimal accelerated methods.
Flexible anchoring has also been used to obtain strong convergence together with fast residual decay \citep{bot2026extragradient}, and \citet{yoon2025accelerated} identify common trajectory structure among several accelerated minimax and fixed-point methods.

\paragraph{Continuous-time and structural interpretations of anchoring.}
Continuous-time models have been used to explain optimism and anchoring in minimax dynamics \citep{ryu2020ode} and to characterize the rate produced by general anchor schedules \citep{suh2023continuoustime}.
Flexible anchored extragradient can be derived by discretizing a Tikhonov-regularized monotone flow \citep{bot2026extragradient}, while operator-side Tikhonov regularization gives a common construction for Halpern, forward, extragradient, and Popov-type methods \citep{chen2026unifying}.

\paragraph{Variance-reduction mechanisms.}
Recursive stochastic estimators have a broad literature outside monotone problems.
STORM and PAGE are representative recursive estimators for stochastic nonconvex optimization \citep{cutkosky2019momentumbased,li2021page}, and ROOT-SGD uses recursive averaging under a stochastic Lipschitz condition on the gradient noise \citep{li2022rootsgd}.
For variational inequalities and monotone inclusions, variance reduction has been developed directly for extragradient and splitting schemes \citep{alacaoglu2022stochastic} and for Halpern iteration \citep{cai2022stochastic}.

%% file: appendices/rain.tex
Table~\ref{tab:stochastic-methods} uses the criterion $\E\|F(z)\|^2\le\epsilon$, while the RAIN theorems of \citet{chen2024nearoptimal} are stated for $\E\|F(z)\|$. The required second-moment rate follows from the same proof.

\begin{proposition}\label{prop:rain-squared-residual}
For a monotone $L$-Lipschitz equation with $\|z_0-z_\star\|\le D$, RAIN attains
\begin{align*}
 \E\|F(z_S)\|^2\le\epsilon
\end{align*}
with $\widetilde O(LD/\sqrt\epsilon+\sigma^2/\epsilon)$ oracle queries.
In particular, the stochastic contribution in the oracle bound is
$O\left(
\frac{\sigma^2}{\epsilon}
\log^3\!\left(1+\frac{LD}{\sqrt{\epsilon}}\right)
\right)$.
\end{proposition}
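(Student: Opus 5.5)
To prove Proposition~\ref{prop:rain-squared-residual}, I will re-run the RAIN analysis of \citet{chen2024nearoptimal} and track second moments throughout. Their expected-residual bound is reached by taking expectations and square roots at the end. RAIN solves a sequence of regularized subproblems with zeros $r_s$ of $H_s(z)=F(z)+a_s(z-c_s)$, where $a_{s+1}=2a_s$ (roughly), $a_0\approx\sqrt{\epsilon}/D$, and $c_{s+1}$ is the recentered anchor. Each subproblem is approximately solved by a stochastic extragradient subroutine (Epoch-SEG) with a prescribed number of queries.

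The skeleton of the argument is a residual decomposition at the final output $z_S$:
\[
\|F(z_S)\|\le \|F(r_S)+a_S(r_S-c_S)\|+(L+a_S)\|z_S-r_S\|+a_S\|r_S-c_S\|,
\]
with $F(r_S)+a_S(r_S-c_S)=0$. The term $\|r_S-c_S\|$ is controlled by a telescoping identity across stages, $\sum_s \lambda_s(c_{s}-\hat z_s)$, exactly as in their proof. Squaring and using $(\sum_i u_i)^2\le (\sum_i w_i)\sum_i u_i^2/w_i$, or equivalently Minkowski's inequality in $L^2$ applied termwise, I obtain
\[
\big(\E\|F(z_S)\|^2\big)^{1/2}\le a_0D+\sum_s \lambda_s\big(\E\|\hat z_{s}-r_s\|^2\big)^{1/2}+C L\big(\E\|z_S-r_S\|^2\big)^{1/2}.
\]
This is the same structure as \eqref{rr:eq:pN-bound}. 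The key point is that the subroutine guarantees in \citet{chen2024nearoptimal}, namely the strongly monotone Epoch-SEG bounds, are already stated for $\E\|\hat z_s-r_s\|^2$. Minkowski's inequality in $L^2$ is just as valid as the triangle inequality in $L^1$, so their first-moment chain transfers verbatim to root-mean-square quantities.

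Next I plug in the per-stage guarantee. With $T_s$ queries at stage $s$ on an $a_s$-strongly monotone subproblem, this has the form
\[
\E\|\hat z_s-r_s\|^2\lesssim e^{-cT_sa_s/L}\,\E\|\hat z_{s-1}-r_s\|^2+\frac{\sigma^2}{a_s^2T_s}.
\]
I choose $T_s\asymp (L/a_s)\log(\cdot)+\sigma^2 S^2/(a_s^2\cdot\text{target})$ so that each $\lambda_s(\E\|\hat z_s-r_s\|^2)^{1/2}\lesssim\sqrt{\epsilon}/S$. The number of stages is $S=\log_2(L/a_0)\approx\log(LD/\sqrt\epsilon)$. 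Summing $T_s$ gives a geometric sum for the deterministic part, $O(LD/\sqrt\epsilon)$ up to logs. For the stochastic part, each stage needs $\sigma^2S^2/\epsilon$ queries, since $\lambda_s\asymp a_s$ cancels the $1/a_s$. Over $S$ stages this totals $\sigma^2S^3/\epsilon$, which is the claimed $\log^3(1+LD/\sqrt\epsilon)$ factor.

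The main obstacle is making sure that no step in their proof uses Jensen in the wrong direction. In particular, no bound of the form $\E\|\cdot\|\le\sqrt{\E\|\cdot\|^2}$ should be applied and then silently squared back, and the randomized output selection must be handled in mean square. I would first check the stage-coupling inequality, where the initialization error of stage $s$ depends on stage $s-1$. I would verify that it holds in $L^2$ by conditioning on the past and using the tower property, so that no cross terms remain.
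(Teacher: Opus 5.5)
Your proposal is correct and follows essentially the paper's argument. Both proofs use a pathwise bound of the residual by $a_0D+\sum_s\lambda_s\|e_s\|$. The paper gets this in two levels: Lemma~3.1 of \citet{chen2024nearoptimal} for $G$, then $\|F(z)\|\le2\|G(z)\|+\lambda D$. It then combines this with RAIN's per-stage mean-square guarantees $256\lambda_s^2S^2\E\|e_s\|^2\le\delta^2$ via Cauchy--Schwarz, where you use Minkowski in $L^2$, which works equally well. Your re-derived count ($S=\Theta(\log(LD/\sqrt\epsilon))$ stages at $O(\sigma^2S^2/\epsilon)$ stochastic queries each) gives the same $\log^3$ factor that the paper obtains by substituting $\delta=\Theta(\sqrt\epsilon)$ and $\lambda=\Theta(\sqrt\epsilon/D)$ into Theorems~4.1--4.2 there.
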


\begin{proof}
First consider the strongly monotone subproblem used by RAIN. Lemma~3.1 and equation~(17) in the proof of Theorem~4.1 of \citet{chen2024nearoptimal} give, for a target $\delta>0$,
\begin{align*}
 \|G(z_S)\|\le16\sum_{s=0}^{S-1}\lambda_s\|e_s\|,
 \qquad
 256\lambda_s^2S^2\E\|e_s\|^2\le\delta^2.
\end{align*}
Hence Cauchy-Schwarz gives $\E\|G(z_S)\|^2\le\delta^2$. For a general monotone equation, RAIN applies this result to $G(z)=F(z)+\lambda(z-z_0)$. Lemma~3.2 of \citet{chen2024nearoptimal} gives $\|F(z)\|\le2\|G(z)\|+\lambda D$. Choosing the internal tolerance and $\lambda D$ as constant multiples of $\sqrt\epsilon$ therefore yields $\E\|F(z_S)\|^2\le\epsilon$, with only constant changes to the parameters. Substituting $\delta=\Theta(\sqrt\epsilon)$ and $\lambda=\Theta(\sqrt\epsilon/D)$ into the oracle count in Theorems~4.1-4.2 of \citet{chen2024nearoptimal} gives the stated order.
\end{proof}

%% file: appendices/vraf_full_proof.tex
\subsection{Potential and residual extraction}

Let $\mathcal F_k:=\sigma(\xi_0,\ldots,\xi_k)$ and
$\E_k[\cdot]:=\E[\cdot\mid\mathcal F_k]$.
For $k\ge1$, let $h_k\in A(z_k)$ be the element selected by the resolvent at iteration $k-1$, so that
\begin{align*}
 z_k+\alpha_{k-1}h_k
 &= (1-\beta_{k-1})z_{k-1}+\beta_{k-1}z_0-\alpha_{k-1}v_{k-1}.
\end{align*}
Define
\begin{align*}
 T_k &:= \frac{\alpha_k}{\beta_k},
 &
 u_k &:= T_k\bigl(v_k-F(z_k)\bigr) \quad (k\ge0),\\
 p_k &:= T_{k-1}\bigl(F(z_k)+h_k\bigr) \quad (k\ge1),
 &
 d &:= z_0-z_\star,
\end{align*}
and, for $k\ge1$,
\begin{align*}
 V_k
 := \frac12\E\|z_k-z_0+u_k+p_k\|^2
 + \frac12\E\|u_k-d\|^2.
\end{align*}

Since $p_k/T_{k-1}=F(z_k)+h_k\in(F+A)(z_k)$ and
$0\in(F+A)(z_\star)$, monotonicity gives
\begin{align*}
 \|z_k-z_\star+p_k\|^2-\|p_k\|^2
 &= \|z_k-z_\star\|^2+2\langle p_k,z_k-z_\star\rangle\ge0.
\end{align*}
Hence
\begin{align}
 \E[\mathcal R(z_k)^2]
 &\le \frac1{T_{k-1}^2}\E\|p_k\|^2\notag\\
 &\le \frac1{T_{k-1}^2}\E\|z_k-z_\star+p_k\|^2\notag\\
 &= \frac1{T_{k-1}^2}\E\bigl\| (z_k-z_0+u_k+p_k)-(u_k-d)\bigr\|^2\notag\\
 &\le \frac{4V_k}{T_{k-1}^2}.
 \label{vh:eq:residual-extraction}
\end{align}
We first prove the uniform bound
\begin{align}
 V_k \le \frac9{10}\left(D^2+\frac{\sigma^2}{\barL^2}\right),
 \qquad k\ge1.
 \label{vh:eq:potential-target}
\end{align}

\subsection{One-step bound}
The exact algebraic identities and coefficient-positivity checks in this proof
are independently verified by the symbolic proof checker provided in the
supplementary material; all checks use exact arithmetic.
\begin{proposition}\label{vh:prop:one-step}
For every $k\ge1$,
\begin{align}
 V_{k+1}
 \le \left(1-\frac{2}{5(k+3)}\right)V_k
 + \frac{2}{5(k+3)}\frac9{10} \left(D^2+\frac{\sigma^2}{\barL^2}\right).
 \label{vh:eq:regular-drift}
\end{align}
\end{proposition}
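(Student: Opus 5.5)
The plan is to turn \eqref{vh:eq:regular-drift} into an exact recursion for the two vectors inside $V_k$, and then close it with monotonicity, Lipschitz continuity and the variance bounds. Write $w_k:=z_k-z_0+u_k+p_k$ and $e_k:=u_k-d$, so that $V_k=\frac12\E\|w_k\|^2+\frac12\E\|e_k\|^2$.

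First I would derive the noise recursion. From the VRAF update and Assumption~\ref{ass:stochastic-oracle},
\begin{align*}
 v_{k+1}-F(z_{k+1})=(1-\gamma_{k+1})\bigl(v_k-F(z_k)\bigr)+\varepsilon_{k+1},
 \qquad
 \varepsilon_{k+1}:=\bigl(\delta_{\xi_{k+1}}(z_{k+1})-\delta_{\xi_{k+1}}(z_k)\bigr)+\gamma_{k+1}\delta_{\xi_{k+1}}(z_k).
\end{align*}
Since $z_{k+1}$ is $\mathcal F_k$-measurable, $\E_k[\varepsilon_{k+1}]=0$. Assumptions~\ref{ass:same-sample} and~\ref{ass:noise-lipschitz} then give
\begin{align*}
 \E_k\|\varepsilon_{k+1}\|^2\le 2L_\Delta^2\|z_{k+1}-z_k\|^2+2\gamma_{k+1}^2\sigma^2 .
\end{align*}
Multiplying by $T_{k+1}$ gives
\begin{align*}
 u_{k+1}=\rho_k u_k+T_{k+1}\varepsilon_{k+1},
 \qquad
 \rho_k:=\frac{T_{k+1}(1-\gamma_{k+1})}{T_k}.
\end{align*}
I expect the schedules for $\alpha_k,\beta_k,\gamma_{k+1}$ to be chosen precisely so that $\rho_k$ is an explicit rational function of $k$ close to $1-\beta_k$. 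I would compute it exactly.

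Second, the resolvent identity $z_{k+1}-z_0=(1-\beta_k)(z_k-z_0)-\alpha_k v_k-\alpha_k h_{k+1}$, together with $\alpha_k v_k=\beta_k u_k+\alpha_k F(z_k)$ and $p_{k+1}=T_k(F(z_{k+1})+h_{k+1})$, gives an exact expression for $w_{k+1}$. It should take the form of a convex-type combination $(1-\beta_k)w_k$, plus a multiple of $e_k$ (the $d$ enters here through $\beta_k$), plus terms in $g_{k+1}:=F(z_{k+1})+h_{k+1}$ and $g_k$, plus the martingale term $T_{k+1}\varepsilon_{k+1}$. The key point is that $z_{k+1}-z_k$ is itself an explicit linear combination of $w_k$, $e_k$, $g_k$, $g_{k+1}$ and $d$.

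Third, I would expand $\frac12\E\|w_{k+1}\|^2+\frac12\E\|e_{k+1}\|^2$. Cross terms with $\varepsilon_{k+1}$ vanish after conditioning on $\mathcal F_k$, leaving $T_{k+1}^2\E\|\varepsilon_{k+1}\|^2$, which is bounded as above. The $\sigma^2$ part carries the factor $T_{k+1}^2\gamma_{k+1}^2=\Theta(\alpha_k^2)=\Theta(1/(\barL^2k))$, and this produces the $\frac{2}{5(k+3)}\cdot\frac{\sigma^2}{\barL^2}$ forcing term. The $L_\Delta^2\|z_{k+1}-z_k\|^2$ part must be absorbed.

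For the absorption I would use two tools. One is monotonicity of $F+A$ between $z_k$ and $z_{k+1}$, namely $\langle g_{k+1}-g_k,z_{k+1}-z_k\rangle\ge0$. The other is $L$-Lipschitzness of $F$, which controls $\|F(z_{k+1})-F(z_k)\|\le L\|z_{k+1}-z_k\|$. Both are valid because $\alpha_k\barL\le\frac7{12}$ by Remark~\ref{rem:vraf-stepsize-decay}. Subtracting $(1-\frac{2}{5(k+3)})V_k$, the leftover is a quadratic form in $(w_k,e_k,g_k,g_{k+1},d)$. The residual $d$-contribution should be bounded by $\frac{2}{5(k+3)}\frac9{10}D^2$ using $\|d\|\le D$.

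The main obstacle is the final step: showing that this quadratic form is nonpositive uniformly in $k\ge1$. I would first try to exhibit an explicit sum-of-squares certificate with multipliers rational in $k$, i.e., a specific weight on the monotonicity inequality and Young's-inequality splittings for the cross terms. I would then check positivity of the resulting coefficient polynomials in $k$ exactly, mirroring the symbolic verification mentioned in the text. Small-$k$ cases may need separate treatment if some coefficient only becomes positive for large $k$.
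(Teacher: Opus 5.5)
Your architecture is the paper's. It has the exact recursion $u_{k+1}=qu_k+\omega_{k+1}$, where your $\rho_k$ equals $q=\frac{2t-1}{2t}$ with $t=k+3$, and the expansion of $V_{k+1}-\rho V_k$ with $\rho=1-\frac2{5t}$. It closes with an explicit sum-of-squares certificate whose multipliers are rational in $t$. The genuine gap is that the inequalities you plan to use are not enough: you are missing monotonicity of $F+A$ between $z_{k+1}$ and the solution, $\langle F(z_{k+1})+h_{k+1},\,z_{k+1}-z_\star\rangle\ge0$.

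Treating $d$ only through $\|d\|\le D$ cannot work, as the following configuration shows.
\begin{itemize}
\item Take the noise-free case $u_k=0$, with $F(z_{k+1})=F(z_k)$ and $F(z_k)+h_k=F(z_{k+1})+h_{k+1}=c$ (e.g.\ $A=0$ and $F\equiv c$ near $z_k,z_{k+1}$).
\item Take $z_k-z_0=-T_{k-1}c$, so that $w_k=0$.
\item Monotonicity between $z_k$ and $z_{k+1}$ and Lipschitz continuity then hold trivially.
\item The update gives $w_{k+1}=(1-\beta_k)(T_k-T_{k-1})c$, hence $V_{k+1}-\rho V_k=\frac12(1-\beta_k)^2(T_k-T_{k-1})^2\|c\|^2+\frac{1-\rho}2\|d\|^2$.
\item Since $T_k>T_{k-1}$, this is unbounded in $c$ for fixed $d$.
\end{itemize}
Only the inequality against $z_\star$ excludes this, because it forces $((1-\beta_k)T_{k-1}+\alpha_k)\|c\|\le\|d\|$.

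The paper adds this inequality with weight $\frac1{5t}$. Completing the square in $p_{k+1}$ then produces $\frac1{20t}\|d-u_k+g+\frac{7(t-2)}{2t}s\|^2$, where $s=(z_k-z_{k+1})/\beta_k$ and $g=T_k(F(z_{k+1})-F(z_k))$. This is how $d$ gets coupled to $s$ and $u_k$. Relatedly, $z_{k+1}-z_k$ is not a combination of $w_k,e_k,g_k,g_{k+1},d$ alone. The resolvent pairs $h_{k+1}$ with $F(z_k)$, so $F(z_{k+1})-F(z_k)$ is an extra variable.

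Two further ingredients matter because the stated constants are tight.

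First, you need $\E u_k=0$, by induction from $\E u_0=0$ and $\E_k\omega_{k+1}=0$, hence $\E\langle u_k,d\rangle=0$. The paper uses this to drop the cross term $\frac1{10t}\langle u_k,d\rangle$ in $V_{k+1}-\rho V_k$, and the term $-\frac3{10t}\langle u_k,d\rangle$ in the final certificate. With the exact $\frac9{10}$ budget there is essentially no room to absorb these by Young's inequality instead. You only note that cross terms with the fresh noise vanish.

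Second, once the $z_k$, $z_{k+1}$ monotonicity inequality is added with weight close to $1$, the order-one part of the form is $-\frac12\|w_k-s\|^2$. This is degenerate, so only $O(1/t)$ negative mass on $\|s\|^2$ is available. After all completions, the paper's leftover coefficient of $\|s\|^2$ is only about $0.015/t$. This has two consequences.
\begin{itemize}
\item The bound $\alpha_k\barL\le\frac7{12}$ is useless for the absorption, since it gives an order-one coefficient. You need the sharp envelope $\barL^2\alpha_k^2\le\frac1{k+3}$, which the paper proves by induction from $\alpha_1=\frac1{2\barL}$.
\item Your split $\E_k\|\varepsilon_{k+1}\|^2\le2L_\Delta^2\|z_{k+1}-z_k\|^2+2\gamma_{k+1}^2\sigma^2$ puts roughly $2\alpha_k^2L_\Delta^2$ on $\|s\|^2$, against the paper's $\frac{29}{20}q^2\alpha_k^2L_\Delta^2$. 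That is too much for the certificate to close at these constants.
\end{itemize}
The Young parameter must therefore be tuned; the paper uses $\varepsilon=\frac9{20}$. It cannot be pushed much the other way either, because the $\sigma^2$ part must fit in $\frac9{10}(1-\rho)\sigma^2/\barL^2$. The paper's choice uses about $0.358/t$ of that $0.36/t$ budget.
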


\begin{proof}
Fix $k\ge1$ and set $t:=k+3\ge4$. Since $\beta_k=3/t$ and
$T_k=\alpha_k/\beta_k$, the schedule gives
\begin{align*}
 \frac{T_{k-1}}{T_k}
 &=\frac{\alpha_{k-1}}{\alpha_k}\frac{\beta_k}{\beta_{k-1}}
 = \frac{2t-1}{2(t-1)}\frac{t-1}{t}
 = \frac{2t-1}{2t},\\
 \frac{T_{k+1}}{T_k}
 &= \frac{\alpha_{k+1}}{\alpha_k}\frac{\beta_k}{\beta_{k+1}}
 = \frac{2t}{2t+1}\frac{t+1}{t}
 = \frac{2(t+1)}{2t+1},\\
 (1-\gamma_{k+1})\frac{T_{k+1}}{T_k}
 &= \left(1-\frac{4t+1}{4t(t+1)}\right) \frac{2(t+1)}{2t+1}
 = \frac{2t-1}{2t}.
\end{align*}
Set
\begin{align*}
 q := \frac{2t-1}{2t},
 \qquad
 \rho := 1-\frac{2}{5t}.
\end{align*}
We will also use
\begin{align}
 \barL^2\alpha_k^2 \le \frac1{k+3},
 \qquad k\ge1.
 \label{vh:eq:alpha-envelope}
\end{align}
Indeed, $\alpha_1=1/(2\barL)$, so the claim holds for $k=1$. If it holds at $k$, then
\begin{align*}
 \barL^2\alpha_{k+1}^2
 &\le \frac1t\left(\frac{2t}{2t+1}\right)^2
 = \frac{4t}{(2t+1)^2}
 < \frac1{t+1}.
\end{align*}

Set
\begin{align*}
 s := \frac{z_k-z_{k+1}}{\beta_k},
 \qquad
 g := T_k\bigl(F(z_{k+1})-F(z_k)\bigr),
 \qquad
 w := z_k-z_0+u_k+p_k.
\end{align*}
The resolvent relation at iteration $k$ gives
\begin{align*}
 z_{k+1}+\alpha_kh_{k+1}
 &= (1-\beta_k)z_k+\beta_kz_0-\alpha_kv_k,\\
 s
 &= z_k-z_0+T_k(v_k+h_{k+1})\\
 &= z_k-z_0+u_k+p_{k+1}-g.
\end{align*}

Subtracting $F(z_{k+1})$ from the definition of $v_{k+1}$ and multiplying by $T_{k+1}$ gives
\begin{align}
 u_{k+1}
 =& qu_k+qT_k\bigl(
 \delta_{\xi_{k+1}}(z_{k+1})-\delta_{\xi_{k+1}}(z_k)\bigr)\notag\\
 &\quad +(T_{k+1}-qT_k)\delta_{\xi_{k+1}}(z_{k+1})\notag\\
 =& qu_k+\omega_{k+1},
 \label{vh:eq:u-step}
\end{align}
where
\begin{align*}
 \omega_{k+1}
 :=& qT_k\bigl( \delta_{\xi_{k+1}}(z_{k+1})-\delta_{\xi_{k+1}}(z_k)\bigr)
 + (T_{k+1}-qT_k)\delta_{\xi_{k+1}}(z_{k+1}).
\end{align*}
Both $z_k$ and $z_{k+1}$ are $\mathcal F_k$-measurable, so unbiasedness gives
\begin{align*}
 \E_k\omega_{k+1}=0.
\end{align*}
Since $\E u_0=T_0\E\delta_{\xi_0}(z_0)=0$, \eqref{vh:eq:u-step} also gives
\begin{align}
 \E u_k=0,
 \qquad
 \E\langle u_k,d\rangle=0,
 \qquad k\ge0.
 \label{vh:eq:u-centered}
\end{align}
Young's inequality with $\varepsilon=9/20$, together with Assumptions~\ref{ass:stochastic-oracle} and~\ref{ass:noise-lipschitz}, gives
\begin{align}
 \E_k\|\omega_{k+1}\|^2
 &\le \frac{29}{20}q^2T_k^2L_\Delta^2 \|z_{k+1}-z_k\|^2
 + \frac{29}{9}(T_{k+1}-qT_k)^2\sigma^2\notag\\
 &= \frac{29}{20}q^2\alpha_k^2L_\Delta^2\|s\|^2
 + \frac{29}{9}(T_{k+1}-qT_k)^2\sigma^2.
 \label{vh:eq:noise-bound}
\end{align}

Using $z_{k+1}=z_k-\beta_ks$, \eqref{vh:eq:u-step}, and
$z_k-z_0+p_{k+1}=s-u_k+g$,
\begin{align*}
 z_{k+1}-z_0+u_{k+1}+p_{k+1}
 &= (1-\beta_k)s+(q-1)u_k+g+\omega_{k+1},\\
 u_{k+1}-d
 &= qu_k-d+\omega_{k+1}.
\end{align*}
Substituting these identities into $V_{k+1}$, using
$\E_k\omega_{k+1}=0$, and subtracting $\rho V_k$ gives
\begin{align}
 V_{k+1}-\rho V_k
 =& \frac12\E\|(1-\beta_k)s+(q-1)u_k+g\|^2
 -\frac\rho2\E\|w\|^2\notag\\
 & +\frac{q^2-\rho}{2}\E\|u_k\|^2
 + \frac{1-\rho}{2}\|d\|^2
 + \E\|\omega_{k+1}\|^2,
 \label{vh:eq:potential-difference}
\end{align}
where \eqref{vh:eq:u-centered} removes the cross term with $d$.

Monotonicity of $F+A$ between $z_k$ and $z_{k+1}$ gives
\begin{align*}
 0
 &\le \langle F(z_k)+h_k-F(z_{k+1})-h_{k+1},z_k-z_{k+1}\rangle\\
 &= \frac{\beta_k}{qT_k}\langle p_k-qp_{k+1},s\rangle.
\end{align*}
Since $\beta_k/(qT_k)>0$ and
$p_k-qp_{k+1}=w-s-g+(1-q)p_{k+1}$,
\begin{align}
 0
 &\le \langle w,s\rangle+(1-q)\langle p_{k+1},s\rangle
 -\|s\|^2-\langle g,s\rangle.
 \label{vh:eq:step-monotonicity-1}
\end{align}
Monotonicity between $z_{k+1}$ and $z_\star$ gives
\begin{align}
 0
 &\le T_k\langle F(z_{k+1})+h_{k+1},z_{k+1}-z_\star\rangle\notag\\
 &= \langle p_{k+1},d-u_k-p_{k+1}+g+(1-\beta_k)s\rangle.
 \label{vh:eq:step-monotonicity-2}
\end{align}
Finally, Lipschitz continuity of $F$ gives
\begin{align}
 0
 &\le \alpha_k^2L^2\|s\|^2-\|g\|^2.
 \label{vh:eq:step-lipschitz}
\end{align}

For $t\ge4$, the coefficients
$1-8/(5t)$, $1/(5t)$, and $29q^2/20$ are nonnegative. Add these multiples of
\eqref{vh:eq:step-monotonicity-1}, \eqref{vh:eq:step-monotonicity-2}, and
\eqref{vh:eq:step-lipschitz}, respectively, to the right-hand side of
\eqref{vh:eq:potential-difference}. Then \eqref{vh:eq:noise-bound} gives
\begin{align*}
 V_{k+1}-\rho V_k
 \le& \E\Bigg[
 \frac12\left\|\left(1-\frac3t\right)s+(q-1)u_k+g\right\|^2
 -\frac\rho2\|w\|^2
 +\frac{q^2-\rho}{2}\|u_k\|^2\\
 & +\left(1-\frac8{5t}\right)
 \bigl(\langle w,s\rangle+(1-q)\langle p_{k+1},s\rangle
       -\|s\|^2-\langle g,s\rangle\bigr)\\
 & +\frac1{5t}\langle p_{k+1},d-u_k-p_{k+1}+g
                  +(1-3/t)s\rangle\\
 & +\frac{29}{20}q^2\alpha_k^2\barL^2\|s\|^2
 -\frac{29}{20}q^2\|g\|^2\Bigg]\\
 & +\frac{1-\rho}{2}D^2
 +\frac{29}{9}(T_{k+1}-qT_k)^2\sigma^2.
\end{align*}
Using \eqref{vh:eq:alpha-envelope} and
\begin{align*}
 \left(1-\frac8{5t}\right)(1-q)
 + \frac{1-3/t}{5t}
 = \frac{7(t-2)}{10t^2},
\end{align*}
this becomes
\begin{align*}
 V_{k+1}-\rho V_k
 \le& \E\Bigg[
 \frac12\left\|\left(1-\frac3t\right)s+(q-1)u_k+g\right\|^2
 +\frac{q^2-\rho}{2}\|u_k\|^2\\
 & -\frac\rho2\|w\|^2
 +\left(1-\frac8{5t}\right)\langle w,s\rangle\\
 & -\frac1{5t}\|p_{k+1}\|^2
 +\frac1{5t}\left\langle p_{k+1},
 d-u_k+g+\frac{7(t-2)}{2t}s\right\rangle\\
 & +\left(\frac{29q^2}{20t}-1+\frac8{5t}\right)\|s\|^2
 -\left(1-\frac8{5t}\right)\langle g,s\rangle
 -\frac{29}{20}q^2\|g\|^2\Bigg]\\
 & +\frac{1-\rho}{2}D^2
 +\frac{29}{9}(T_{k+1}-qT_k)^2\sigma^2.
\end{align*}
The two terms containing $w$ satisfy
\begin{align*}
 -\frac\rho2\|w\|^2
 +\left(1-\frac8{5t}\right)\langle w,s\rangle
 &= -\frac\rho2\left\|w-
 \frac{1-8/(5t)}{\rho}s\right\|^2
 +\frac{(1-8/(5t))^2}{2\rho}\|s\|^2,
\end{align*}
and the two terms containing $p_{k+1}$ satisfy
\begin{align*}
 & -\frac1{5t}\|p_{k+1}\|^2
 +\frac1{5t}\left\langle p_{k+1},
 d-u_k+g+\frac{7(t-2)}{2t}s\right\rangle\\
 &\quad=-\frac1{5t}\left\|p_{k+1}-\frac12(d-u_k+g)
 -\frac{7(t-2)}{4t}s\right\|^2
 +\frac1{20t}\left\|d-u_k+g+\frac{7(t-2)}{2t}s\right\|^2.
\end{align*}
After discarding these two nonpositive squared norms, define
\begin{align*}
 \widehat Q_t(u,s,g,d)
 :=& \frac12\left\|\left(1-\frac3t\right)s+(q-1)u+g\right\|^2\\
 & +\left[
 \frac{(1-8/(5t))^2}{2\rho}-1+\frac8{5t}
 +\frac{29q^2}{20t}\right]\|s\|^2
 -\left(1-\frac8{5t}\right)\langle g,s\rangle\\
 & +\frac1{20t}\left\|d-u+g+\frac{7(t-2)}{2t}s\right\|^2
 -\frac{29}{20}q^2\|g\|^2
 +\frac{q^2-\rho}{2}\|u\|^2.
\end{align*}
Then
\begin{align}
 V_{k+1}-\rho V_k
 \le \E\widehat Q_t(u_k,s,g,d)
 +\frac{1-\rho}{2}D^2
 +\frac{29}{9}(T_{k+1}-qT_k)^2\sigma^2.
 \label{vh:eq:one-step-reduction}
\end{align}

Substituting $q=(2t-1)/(2t)$ and $\rho=1-2/(5t)$ gives
\begin{align*}
 & \frac25(1-\rho)\|d\|^2
 -\frac{3}{10t}\langle u,d\rangle
 -\widehat Q_t(u,s,g,d)\\
 & =\frac{t-1}{4t^2}\|u\|^2
 +\frac{295t^3-646t^2-1029t+450}
 {80t^3(5t-2)}\|s\|^2
 +\frac{76t^2-120t+29}{80t^2}\|g\|^2
 +\frac{11}{100t}\|d\|^2\\
 &\quad +\frac{17t-44}{20t^2}\langle u,s\rangle
 +\frac3{5t}\langle u,g\rangle
 -\frac1{5t}\langle u,d\rangle
 +\frac{7(3t+2)}{20t^2}\langle s,g\rangle\\
 &\quad -\frac{7(t-2)}{20t^2}\langle s,d\rangle
 -\frac1{10t}\langle g,d\rangle.
\end{align*}
Completing the terms involving $d$ gives
\begin{align*}
 & \frac{11}{100t}
 \left\|d-\frac{10}{11}u-\frac5{11}g
 -\frac{35(t-2)}{22t}s\right\|^2\\
 &\quad +\frac{7t-11}{44t^2}\|u\|^2
 +\frac{2020t^3-1716t^2-18179t+6910}
 {880t^3(5t-2)}\|s\|^2
 +\frac{836t^2-1340t+319}{880t^2}\|g\|^2\\
 &\quad +\frac{117t-344}{220t^2}\langle u,s\rangle
 +\frac{28}{55t}\langle u,g\rangle
 +\frac{7(7t+8)}{55t^2}\langle s,g\rangle.
\end{align*}
Completing the terms involving $u$ gives
\begin{align*}
 & \frac{11}{100t}
 \left\|d-\frac{10}{11}u-\frac5{11}g
 -\frac{35(t-2)}{22t}s\right\|^2\\
 &\quad +\frac{7t-11}{44t^2}
 \left\|u+\frac{(117t-344)s+112tg}{70t-110}\right\|^2\\
 &\quad +\frac{205t^4+23518t^3-117687t^2+134397t-34550}
 {400t^3(5t-2)(7t-11)}\|s\|^2\\
 &\quad +\frac{7(t^2+53t-40)}{25t^2(7t-11)}\langle s,g\rangle
 +\frac{2660t^3-9584t^2+7715t-1595}
 {400t^2(7t-11)}\|g\|^2.
\end{align*}
For $c>0$,
\begin{align*}
 a\|s\|^2+b\langle s,g\rangle+c\|g\|^2
 =c\left\|g+\frac{b}{2c}s\right\|^2
 +\left(a-\frac{b^2}{4c}\right)\|s\|^2.
\end{align*}
Set $m:=t-4\ge0$. For the last three terms above, the coefficient of
$\|g\|^2$ is
\begin{align*}
 \frac{2660m^3+22336m^2+58723m+46161}
 {400(m+4)^2(7m+17)}>0.
\end{align*}
After completing the square in $g$, the remaining coefficient of $\|s\|^2$ is
\begin{align*}
 \frac{
 \left(\begin{aligned}
 &15580m^6+2129188m^5+26216605m^4+125305406m^3\\
 &\qquad+267420385m^2+214441225m+2604726
 \end{aligned}\right)
 }
 {80(m+4)^3(5m+18)
  (2660m^3+22336m^2+58723m+46161)}>0.
\end{align*}
Therefore
\begin{align}
 \widehat Q_t(u,s,g,d)
 &\le \frac25(1-\rho)\|d\|^2
 -\frac{3}{10t}\langle u,d\rangle.
 \label{vh:eq:qhat-bound}
\end{align}
Taking expectation in \eqref{vh:eq:qhat-bound} and using \eqref{vh:eq:u-centered},
\begin{align}
 \E\widehat Q_t(u_k,s,g,d)
 \le \frac25(1-\rho)D^2.
 \label{vh:eq:qhat-expectation}
\end{align}

Finally, $qT_k=(1-\gamma_{k+1})T_{k+1}$, so
$T_{k+1}-qT_k=\gamma_{k+1}T_{k+1}$. By
\eqref{vh:eq:alpha-envelope} at $k+1$ and $\beta_{k+1}=3/(t+1)$,
\begin{align*}
 \barL^2T_{k+1}^2
 = \barL^2\alpha_{k+1}^2\frac{(t+1)^2}{9}
 \le\frac{t+1}{9}.
\end{align*}
Hence
\begin{align*}
 \frac{29}{9}\barL^2(T_{k+1}-qT_k)^2
 &\le \frac{29(4t+1)^2}{1296t^2(t+1)},\\
 \frac9{25t}
 -\frac{29(4t+1)^2}{1296t^2(t+1)}
 &= \frac{64t^2+5864t-725}{32400t^2(t+1)}>0.
\end{align*}
Since $9/(25t)=(9/10)(1-\rho)$,
\begin{align}
 \frac{29}{9}(T_{k+1}-qT_k)^2\sigma^2
 \le\frac9{10}(1-\rho)\frac{\sigma^2}{\barL^2}.
 \label{vh:eq:step-noise-final}
\end{align}
Combining \eqref{vh:eq:one-step-reduction},
\eqref{vh:eq:qhat-expectation}, and \eqref{vh:eq:step-noise-final} gives
\begin{align*}
 V_{k+1}-\rho V_k
 &\le \left(\frac12+\frac25\right)(1-\rho)D^2
 +\frac9{10}(1-\rho)\frac{\sigma^2}{\barL^2}\\
 &= \frac9{10}(1-\rho)
 \left(D^2+\frac{\sigma^2}{\barL^2}\right).
\end{align*}
Since $1-\rho=2/[5(k+3)]$, this is \eqref{vh:eq:regular-drift}.
\end{proof}

\subsection{Initialization and conclusion}

At $k=0$,
\begin{align*}
 \beta_0 &= 1,
 &
 T_0 &= \alpha_0=\frac7{12\barL},
 &
 \alpha_1 &= \frac1{2\barL},\\
 \beta_1 &= \frac34,
 &
 T_1 &= \frac2{3\barL},
 &
 \gamma_1 &= \frac{13}{48}.
\end{align*}
Set
\begin{align*}
 s_0 := z_0-z_1,
 \qquad
 g_0 := T_0\bigl(F(z_1)-F(z_0)\bigr),
 \qquad
 \ell := \frac{L^2}{\barL^2}\in[0,1].
\end{align*}
Since $\beta_0=1$, the resolvent relation gives
\begin{align*}
 s_0
 &= T_0(v_0+h_1)
 = u_0+T_0\bigl(F(z_0)+h_1\bigr)
 = u_0+p_1-g_0,\\
 p_1 &= s_0-u_0+g_0.
\end{align*}
Subtracting $F(z_1)$ from the definition of $v_1$ and multiplying by $T_1$ gives
\begin{align*}
 u_1
 =& \frac56u_0
 +\frac{35}{72\barL}\bigl(
 \delta_{\xi_1}(z_1)-\delta_{\xi_1}(z_0)\bigr)
 +\frac{13}{72\barL}\delta_{\xi_1}(z_1)\\
 =& \frac56u_0+\omega_1,
\end{align*}
where
\begin{align*}
 \omega_1
 := \frac{35}{72\barL}\bigl(
 \delta_{\xi_1}(z_1)-\delta_{\xi_1}(z_0)\bigr)
 +\frac{13}{72\barL}\delta_{\xi_1}(z_1),
 \qquad
 \E_0\omega_1=0.
\end{align*}
Young's inequality with $\varepsilon=3/10$ gives
\begin{align}
 \E_0\|\omega_1\|^2
 &\le \frac{3185}{10368}(1-\ell)\|s_0\|^2
 +\frac{2197}{15552}\frac{\sigma^2}{\barL^2}.
 \label{vh:eq:init-noise}
\end{align}
Since $z_1-z_0+p_1=-u_0+g_0$,
\begin{align*}
 z_1-z_0+u_1+p_1
 &= g_0-\frac16u_0+\omega_1,\\
 u_1-d
 &= \frac56u_0-d+\omega_1.
\end{align*}
Using $\E_0\omega_1=0$ and \eqref{vh:eq:init-noise} in the definition of $V_1$,
\begin{align}
 V_1
 \le& \E\left[
 \frac12\left\|g_0-\frac16u_0\right\|^2
 + \frac12\left\|\frac56u_0-d\right\|^2
 + \frac{3185}{10368}(1-\ell)\|s_0\|^2\right]
 + \frac{2197}{15552}\frac{\sigma^2}{\barL^2}.
 \label{vh:eq:init-prebound}
\end{align}

Monotonicity, Lipschitz continuity, and unbiasedness give
\begin{align}
 \langle s_0-u_0+g_0,d-s_0\rangle &\ge 0,
 \label{vh:eq:init-monotonicity}\\
 \frac{49\ell}{144}\|s_0\|^2-\|g_0\|^2 &\ge 0,
 \label{vh:eq:init-lipschitz}\\
 \E\langle u_0,d\rangle &= 0.
 \label{vh:eq:init-centered}
\end{align}
For any $g,u,d,s\in\mathcal H$,
\begin{align}
 & \frac9{10}\|d\|^2+\frac{21}{10}\|u\|^2
 -\frac12\left\|g-\frac16u\right\|^2
 -\frac12\left\|\frac56u-d\right\|^2
 -\frac{3185}{10368}(1-\ell)\|s\|^2\notag\\
 &\quad -\frac7{10}\langle s-u+g,d-s\rangle
 -\left(\frac{49\ell}{144}\|s\|^2-\|g\|^2\right)
 -\frac45\langle u,d\rangle\notag\\
 &= \frac12\left\|g+\frac16u-\frac7{10}d+\frac7{10}s\right\|^2
 +\frac{69}{40}\left\|u+\frac{17}{69}d-\frac{49}{207}s\right\|^2\notag\\
 &\quad +\frac{347}{6900}\left\|d-\frac{91}{1041}s\right\|^2
 +\frac{4417}{249840}\|s\|^2
 +(1-\ell)\frac{343}{10368}\|s\|^2.
 \label{vh:eq:init-factorization}
\end{align}
The right-hand side is nonnegative. Applying \eqref{vh:eq:init-factorization} to
$(g_0,u_0,d,s_0)$, taking expectation, and using
\eqref{vh:eq:init-monotonicity}--\eqref{vh:eq:init-centered} gives
\begin{align}
 & \E\left[
 \frac12\left\|g_0-\frac16u_0\right\|^2
 +\frac12\left\|\frac56u_0-d\right\|^2
 +\frac{3185}{10368}(1-\ell)\|s_0\|^2\right]\notag\\
 &\qquad \le\frac9{10}\|d\|^2+\frac{21}{10}\E\|u_0\|^2.
 \label{vh:eq:init-quadratic-bound}
\end{align}
Moreover,
\begin{align*}
 \E\|u_0\|^2
 =T_0^2\E\|\delta_{\xi_0}(z_0)\|^2
 \le \frac{49}{144}\frac{\sigma^2}{\barL^2}.
\end{align*}
Combining this bound with \eqref{vh:eq:init-prebound} and
\eqref{vh:eq:init-quadratic-bound},
\begin{align}
 V_1
 &\le \frac9{10}\|d\|^2
 +\frac{66551}{77760}\frac{\sigma^2}{\barL^2}\notag\\
 &\le \frac9{10}\left(D^2+\frac{\sigma^2}{\barL^2}\right).
 \label{vh:eq:init-bound}
\end{align}
By \eqref{vh:eq:regular-drift}, \eqref{vh:eq:init-bound} implies
\eqref{vh:eq:potential-target} by induction.

It remains to lower-bound $T_k$. The schedule gives
\begin{align*}
 \frac{T_{k+1}}{T_k}
 &= \frac{2(k+4)}{2k+7},\\
 \frac{T_{k+1}^2/(k+4)}{T_k^2/(k+3)}
 &= 1-\frac1{(2k+7)^2}.
\end{align*}
Since $\barL^2T_0^2/3=49/432$,
\begin{align*}
 \frac{\barL^2T_k^2}{k+3}
 &= \frac{49}{432}
 \prod_{j=0}^{k-1}\left(1-\frac1{(2j+7)^2}\right)\\
 &\ge \frac{49}{432}
 \prod_{j=0}^{\infty}\left(1-\frac1{(2j+7)^2}\right)\\
 &= \frac{49}{432}\frac{75\pi}{256}
 = \frac{1225\pi}{36864}
 > \frac{18}{175},
\end{align*}
where the equality uses the Wallis product and the last inequality follows from $\pi>31/10$. Thus
\begin{align}
 T_k^2
 &\ge \frac{1225\pi}{36864\barL^2}(k+3)
 > \frac{18}{175\barL^2}(k+3),
 \qquad k\ge0.
 \label{vh:eq:scale-lower}
\end{align}
Finally, \eqref{vh:eq:residual-extraction}, \eqref{vh:eq:potential-target}, and
\eqref{vh:eq:scale-lower} give, for every $N\ge1$,
\begin{align*}
 \E[\mathcal R(z_N)^2]
 &\le \frac{4V_N}{T_{N-1}^2}\\
 &\le 4\cdot\frac9{10}
 \left(D^2+\frac{\sigma^2}{\barL^2}\right)
 \frac{175\barL^2}{18(N+2)}\\
 &= 35\frac{\barL^2D^2+\sigma^2}{N+2}.
\end{align*}
This proves Theorem~\ref{thm:inclusion}.

\subsection{Tightness of the deterministic rate}\label{app:vraf-deterministic-tightness}
The $O(1/N)$ deterministic term in Theorem~\ref{thm:inclusion} is tight for the VRAF schedule.  The following argument applies more generally to the~\eqref{eq:vraf} whenever
$L\alpha_k=\Theta(k^{-1/2})$ and $\beta_k=\Theta(k^{-1})$.

\begin{lemma}\label{lem:vraf-deterministic-tightness}
Consider the deterministic iteration
\begin{align}
 z_{k+1}=z_k-\alpha_kF(z_k)+\beta_k(z_0-z_k),
 \label{eq:anchored-forward-tightness}
\end{align}
where $\alpha_k,\beta_k>0$, $L\alpha_k=\Theta(k^{-1/2})$, and
$\beta_k=\Theta(k^{-1})$.  For the one-dimensional monotone $L$-Lipschitz operator
$F(z)=Lz$ and $z_0=D>0$,
\begin{align*}
 \|F(z_k)\|^2=\Omega\left(\frac{L^2D^2}{k}\right).
\end{align*}
\end{lemma}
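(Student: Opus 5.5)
With $F(z)=Lz$ the iteration \eqref{eq:anchored-forward-tightness} is a scalar affine recursion, and I would solve it explicitly. Writing $c_k:=L\alpha_k$, it reads
\begin{align*}
 z_{k+1}=(1-c_k-\beta_k)z_k+\beta_kD .
\end{align*}
Put $\theta_k:=c_k+\beta_k$, so that $\theta_k=\Theta(k^{-1/2})$. The fixed point of the $k$-th affine map is $\bar z_k:=\beta_kD/\theta_k$. The key observation is that this frozen fixed point is already of order $\beta_k/c_k=\Theta(k^{-1/2})$ times $D$. This gives $F(\bar z_k)^2=\Theta(L^2D^2/k)$, exactly the claimed order. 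The plan is therefore to show that $z_k$ tracks $\bar z_k$ up to a relative error $o(1)$, or at least stays bounded below by a constant multiple of it.

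\textbf{Step 1 (positivity).} For $k\ge K_0$, the hypotheses give $\theta_k\le1$, so $1-\theta_k\ge0$. Early indices, where $1-\theta_k$ may be negative, only affect $z_{K_0}$; I handle them below. For all $k\ge K_0$ the recursion gives
\begin{align*}
 z_{k+1}\ge\beta_kD+(1-\theta_k)z_k ,
\end{align*}
so once some $z_j\ge0$ with $j\ge K_0$, every later iterate is positive. If $z_{K_0}<0$, the term $(1-\theta_k)z_k$ is a product of factors $\prod(1-\theta_j)\le\exp(-\sum\theta_j)$. Since $\sum_j\theta_j$ grows like $\sqrt k$, this negative contribution decays like $e^{-c\sqrt k}$, and it is eventually dominated by the positive forcing terms. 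This reduces the problem to the positive regime.

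\textbf{Step 2 (lower bound via the deviation).} Let $e_k:=z_k-\bar z_k$. Then
\begin{align*}
 e_{k+1}=(1-\theta_k)e_k+(\bar z_k-\bar z_{k+1}).
\end{align*}
Because $\beta_k/\theta_k$ is a ratio of regularly behaving $\Theta$-sequences, the drift term has size $|\bar z_k-\bar z_{k+1}|=O(D k^{-3/2})$. The contraction is $1-\theta_k$ with $\theta_k\ge c k^{-1/2}$. Unrolling the recursion,
\begin{align*}
 |e_k|\lesssim e^{-c'\sqrt k}|e_{K_0}|+\max_{j\le k}\frac{|\bar z_j-\bar z_{j+1}|}{\theta_j}\cdot(\text{const}) .
\end{align*}
The second term uses the standard bound on the sum $\sum_j\prod_{i>j}(1-\theta_i)\,x_j$ with $x_j/\theta_j=O(Dj^{-1})$. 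This yields $|e_k|=O(D/k)=o(D k^{-1/2})$. Hence
\begin{align*}
 z_k\ge\bar z_k-|e_k|\ge\tfrac12\bar z_k\ge cDk^{-1/2}
\end{align*}
for large $k$, and so $\|F(z_k)\|^2=L^2z_k^2=\Omega(L^2D^2/k)$.

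\textbf{Main obstacle.} The drift estimate is where the real work lies. The hypotheses give only $\Theta$-bounds, not monotonicity or smoothness of $\alpha_k,\beta_k$, so $\bar z_k-\bar z_{k+1}$ need not be small relative to $\bar z_k$.

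To avoid needing it, I would first try a more robust lower-bound argument that bypasses $\bar z_k$. The idea is to drop all terms except the last few forcing terms. For $k\ge K_0$ and nonnegative iterates,
\begin{align*}
 z_k\ge\sum_{j=k-m}^{k-1}\beta_jD\prod_{i=j+1}^{k-1}(1-\theta_i).
\end{align*}
Choose the window $m=\lfloor\sqrt k\rfloor$. Then each product over the window is at least $\exp(-2\sum_i\theta_i)\ge e^{-C}$, since $\theta_i=O(i^{-1/2})$ summed over $\sqrt k$ indices near $k$ is $O(1)$. Each $\beta_j\ge c/k$. Together these give
\begin{align*}
 z_k\ge \sqrt k\cdot\frac{c}{k}\cdot e^{-C}D=\Omega\!\left(\frac{D}{\sqrt k}\right),
\end{align*}
using only the $\Theta$ assumptions. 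This requires nonnegativity of $z_{k-m}$, which Step 1 supplies for $k$ large. Squaring and multiplying by $L^2$ gives the claim.
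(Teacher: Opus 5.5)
Your final argument, the window bound combined with Step~1, is correct. Step~2 should be dropped rather than kept as the main line: as you suspected, its drift estimate does not follow from $\Theta$-hypotheses. Take $L\alpha_k=k^{-1/2}$ and let $\beta_k$ alternate between $1/k$ and $2/k$. Then $\bar z_k$ alternates between roughly $D/\sqrt k$ and $2D/\sqrt k$, so $|\bar z_k-\bar z_{k+1}|=\Theta(Dk^{-1/2})$, and $e_k$ is of order $Dk^{-1/2}$ rather than $O(D/k)$.

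The window argument avoids all of this. Once $\theta_i\le\tfrac12$ (so $1-\theta_i\ge e^{-2\theta_i}$) and $z_{k-m}\ge0$, the bounds $\theta_i=O(i^{-1/2})$ and $\beta_j\ge b/(j+1)$ show that each of the $m=\lfloor\sqrt k\rfloor$ most recent forcing terms contributes at least $e^{-C}bD/k$. Step~1 closes because the single term $\beta_{k-1}D\ge bD/k$ already dominates the $e^{-c\sqrt k}$-decaying contribution of $z_{K_0}$.

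This is a genuinely different route from the paper's, which uses a comparison (sub-solution) argument. With $L\alpha_k\le a/\sqrt{k+1}$ and $\beta_k\ge b/(k+1)$, the paper takes $w_k=cD/\sqrt{k+1}$ with $c\le\tfrac12$, $ac<b/2$, and $w_K\le z_K$. It verifies the one-step inequality $(1-L\alpha_k-\beta_k)w_k+D\beta_k-w_{k+1}\ge\beta_k(D-w_k)-L\alpha_kw_k>0$ and propagates $z_k\ge w_k$ by induction, using nonnegativity of the coefficient. Eventual positivity comes from $z_{k+1}-z_k\ge D\beta_k$ whenever $z_k\le0$, together with $\sum_k\beta_k=\infty$.

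The barrier proof is shorter and needs only one-step inequalities, with no products or exponential estimates. Your unrolling instead makes the mechanism behind the rate explicit: the contraction $1-\theta_k$ has a memory of length about $\sqrt k$, over which the anchor injects about $\sqrt k$ increments of size $\Theta(D/k)$. It also yields, for all sufficiently large $k$, a constant depending only on the $\Theta$-constants. The paper's $c$, by contrast, must also be small enough that $w_K\le z_K$.

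Neither argument needs the lower bound on $L\alpha_k$. You use it only in Step~1, and the paper's $\sum_k\beta_k=\infty$ argument would serve there equally well.
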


\begin{proof}
For $F(z)=Lz$,
\begin{align}
 z_{k+1}=(1-L\alpha_k-\beta_k)z_k+D\beta_k.
 \label{eq:linear-tightness-recursion}
\end{align}
For all sufficiently large $k$,
$0\le 1-L\alpha_k-\beta_k\le 1$, and there are constants $a,b>0$ such that
\begin{align*}
 L\alpha_k\le \frac{a}{\sqrt{k+1}},
 \qquad
 \beta_k\ge \frac{b}{k+1}.
\end{align*}
Moreover, $z_k>0$ from some index onward.  Whenever $z_k\le0$,
\eqref{eq:linear-tightness-recursion} gives
\begin{align*}
 z_{k+1}-z_k
 =-(L\alpha_k+\beta_k)z_k+D\beta_k
 \ge D\beta_k.
\end{align*}
Since $\sum_k\beta_k=\infty$, the iterates cannot remain nonpositive; once
$z_k>0$, the nonnegative coefficient in
\eqref{eq:linear-tightness-recursion} keeps all subsequent iterates positive.

Choose an index $K$ from which these properties hold and choose $c>0$ sufficiently small that
\begin{align*}
 \frac{cD}{\sqrt{K+1}}\le z_K,
 \qquad
 c\le\frac12,
 \qquad
 ac<\frac b2.
\end{align*}
Set $w_k=cD/\sqrt{k+1}$.  For every $k\ge K$,
\begin{align*}
 (1-L\alpha_k-\beta_k)w_k+D\beta_k-w_{k+1}
 &=(w_k-w_{k+1})+\beta_k(D-w_k)-L\alpha_kw_k\\
 &\ge \frac{bD}{2(k+1)}-\frac{acD}{k+1}>0.
\end{align*}
Hence \eqref{eq:linear-tightness-recursion} and induction give
$z_k\ge w_k=cD/\sqrt{k+1}$ for every $k\ge K$.  Therefore
\begin{align*}
 \|F(z_k)\|^2=L^2z_k^2
 \ge \frac{c^2L^2D^2}{k+1},
\end{align*}
which proves the claim.
\end{proof}

In the deterministic case, VRAF has $v_k=F(z_k)$ and therefore reduces to
\eqref{eq:anchored-forward-tightness} when $A=0$.  Moreover,
Remark~\ref{rem:vraf-stepsize-decay} gives $L\alpha_k=\Theta(k^{-1/2})$,
and $\beta_k=3/(k+3)=\Theta(k^{-1})$.  Lemma~\ref{lem:vraf-deterministic-tightness}
therefore shows that the deterministic $O(L^2D^2/N)$ term in
Theorem~\ref{thm:inclusion} is order-tight for the VRAF schedule.

%% file: appendices/acceleration_barrier_proof.tex
We prove the lower bound for adaptive randomized algorithms. At each oracle query, the algorithm may choose the query point and may query either a fresh stochastic operator or any previously sampled stochastic operator, based on its internal randomness and all previous observations.

\subsection{Hard family}\label{lb:sec:family}

We use a rescaled version of the one-dimensional noisy binary search family in \citet[Theorem~11]{foster2019complexity}. Fix $M\ge16$ and a hidden index $j\in\{0,\ldots,M-1\}$. Let $Z=(Z_0,\ldots,Z_M)\in\{-1,1\}^{M+1}$ have independent coordinates with
\begin{align*}
 \Pp_j(Z_i=1)=
 \begin{cases}
 \frac12-\frac1{8M}, & i\le j,\\
 \frac12+\frac1{8M}, & i>j.
 \end{cases}
\end{align*}
Define $F_Z:\R\to\R$ by
\begin{align*}
 F_Z(x)=
 \begin{cases}
 \frac{Z_0}{M}, & x<0,\\
 \frac{(i+1-Mx)Z_i+(Mx-i)Z_{i+1}}{M},
 & x\in[i/M,(i+1)/M],\quad i=0,\ldots,M-1,\\
 \frac{Z_M}{M}, & x>1.
 \end{cases}
\end{align*}
Let $\E_j$ denote expectation under $\Pp_j$ and define $F_j:=\E_j[F_Z]$. For the monotone inclusion, set $A=0$ and $z_0=0$.

\begin{lemma}\label{lb:lem:parameters}
For every $M\ge16$ and $j\in\{0,\ldots,M-1\}$, $F_j$ is monotone and $1/(2M)$-Lipschitz, with unique zero $x_j^\star=(j+\tfrac12)/M$. The oracle $F_Z$ satisfies Assumptions~\ref{ass:stochastic-oracle} and~\ref{ass:noise-lipschitz}. Hence Assumptions~\ref{ass:problem}--\ref{ass:noise-lipschitz} hold with
\begin{align}
 D=1,
 \qquad
 L=\frac1{2M},
 \qquad
 \sigma^2=\frac1{M^2},
 \qquad
 L_\Delta=3.
 \label{lb:eq:hard-parameters}
\end{align}
\end{lemma}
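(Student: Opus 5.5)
The plan is a direct computation. Everything follows from two facts: $F_Z$ is the piecewise-linear interpolation through the nodes $(i/M, Z_i/M)$, extended by constants outside $[0,1]$, and this construction is linear in $Z$. Hence $F_j=\E_j[F_Z]$ is the same interpolation through the nodes $(i/M,\mu_i/M)$, where $\mu_i:=\E_jZ_i=2\Pp_j(Z_i=1)-1$. This gives
\begin{align*}
\mu_i=-\tfrac1{4M}\ (i\le j),\qquad \mu_i=\tfrac1{4M}\ (i>j).
\end{align*}

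\textbf{The mean operator.} Because only the consecutive pair of nodes $(j,j+1)$ carries different values, $F_j$ has the following shape:
\begin{itemize}
\item $F_j\equiv-\frac1{4M^2}$ on $(-\infty,j/M]$;
\item on $[j/M,(j+1)/M]$ it is linear, rising from $-\frac1{4M^2}$ to $\frac1{4M^2}$, with slope $M\cdot\frac{2}{4M^2}=\frac1{2M}$;
\item $F_j\equiv\frac1{4M^2}$ on $[(j+1)/M,\infty)$.
\end{itemize}
This function is continuous and nondecreasing, which is monotonicity on $\R$. All of its slopes lie in $\{0,\frac1{2M}\}$, so it is $\frac1{2M}$-Lipschitz. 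It is nonzero outside the middle interval and strictly increasing inside it, so it has a unique zero at the midpoint $x_j^\star=(j+\tfrac12)/M$. With $z_0=0$ we get $|z_0-x_j^\star|\le (M-\frac12)/M<1$, so $D=1$ is admissible. Together with $A=0$, this gives Assumption~\ref{ass:problem} with $L=\frac1{2M}$.

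\textbf{The oracle.} Unbiasedness, $\E_j\delta_Z(x)=0$ with $\delta_Z:=F_Z-F_j$, is immediate from the linearity in $Z$ noted above. For the variance, $F_Z(x)$ is a convex combination of two values $Z_i/M$ with $|Z_i|=1$, so $|F_Z(x)|\le 1/M$. Therefore
\begin{align*}
\E_j\|\delta_Z(x)\|^2\le\E_j F_Z(x)^2\le \frac1{M^2},
\end{align*}
which is Assumption~\ref{ass:stochastic-oracle} with $\sigma^2=1/M^2$.

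\textbf{The Lipschitz noise constant.} Here I will argue pathwise rather than in mean square. The noise $\delta_Z$ is itself a continuous piecewise-linear interpolation, through the nodes $(i/M,(Z_i-\mu_i)/M)$, with constant extension outside $[0,1]$. On the piece $[i/M,(i+1)/M]$ its slope is
\begin{align*}
(Z_{i+1}-Z_i)-(\mu_{i+1}-\mu_i).
\end{align*}
The first term has absolute value at most $2$ and the second at most $\frac1{2M}\le1$, so each slope has absolute value at most $3$; the constant pieces have slope $0$. A continuous piecewise-linear function with all slopes bounded by $3$ is $3$-Lipschitz. Thus $|\delta_Z(x)-\delta_Z(y)|\le3|x-y|$ for every realization of $Z$, and taking expectations gives Assumption~\ref{ass:noise-lipschitz} with $L_\Delta=3$.

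Assumption~\ref{ass:noise-lipschitz} only requires a mean-square bound, but the pathwise slope bound gives it immediately, so no variance computation is needed there. The one step to get right is keeping the slope bookkeeping for $\delta_Z$ separate from that of $F_j$. Beyond that, the only real content is identifying the single rising segment of $F_j$ in the first step.
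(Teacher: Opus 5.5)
Your proof is correct and follows essentially the same route as the paper: you compute the coordinate means to get the explicit three-piece form of $F_j$, use $|F_Z|\le 1/M$ for the variance bound, and use a pathwise slope bound of $2+\frac1{2M}\le 3$ for the noise Lipschitz constant. The only cosmetic difference is that you bound the slope of $\delta_Z=F_Z-F_j$ directly on each piece, whereas the paper bounds the Lipschitz constants of $F_Z$ (namely $2$) and $F_j$ (namely $\frac1{2M}$) separately and adds them.
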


\begin{proof}
The coordinate means are
\begin{align*}
 \E_j[Z_i]=
 \begin{cases}
 -\frac1{4M}, & i\le j,\\
 \frac1{4M}, & i>j.
 \end{cases}
\end{align*}
Therefore
\begin{align*}
 F_j(x)=
 \begin{cases}
 -\frac1{4M^2}, & x\le \frac jM,\\[2mm]
 \frac{x-(j+\tfrac12)/M}{2M},
 & \frac jM\le x\le\frac{j+1}{M},\\[3mm]
 \frac1{4M^2}, & x\ge\frac{j+1}{M}.
 \end{cases}
\end{align*}
Thus $F_j$ is monotone and $1/(2M)$-Lipschitz and vanishes only at $x_j^\star$. Since $x_j^\star\in(0,1)$ and $z_0=0$, $|z_0-x_j^\star|\le1$.

Unbiasedness follows from $F_j=\E_j[F_Z]$. Moreover, $|F_Z(x)|\le1/M$ for every $x$, so
\begin{align*}
 \E_j|F_Z(x)-F_j(x)|^2
 =\operatorname{Var}_{\Pp_j}(F_Z(x))
 \le\frac1{M^2}.
\end{align*}
Finally, adjacent grid values of $F_Z$ differ by at most $2/M$ over an interval of length $1/M$, so every $F_Z$ is $2$-Lipschitz. Since $F_j$ is $1/(2M)$-Lipschitz,
\begin{align*}
 |[F_Z(x)-F_j(x)]-[F_Z(y)-F_j(y)]|
 &\le \left(2+\frac1{2M}\right)|x-y|\\
 &\le 3|x-y|.
\end{align*}
This proves \eqref{lb:eq:hard-parameters}.
\end{proof}

\subsection{Reduction to noisy binary search}\label{lb:sec:reduction}

\begin{lemma}[Adaptive noisy binary search;
cf. {\citet[proof of Theorem~11]{foster2019complexity}}]\label{lb:lem:adaptive-nbs}
Let $J$ be uniform on $\{0,\ldots,M-1\}$ and, conditional on $J=j$, let the rows $Z^{(s)}=(Z_0^{(s)},\ldots,Z_M^{(s)})$, $s\ge1$, be independent with independent coordinates satisfying
\begin{align*}
 \Pp(Z_i^{(s)}=1\mid J=j)=
 \begin{cases}
  \frac12-\frac1{8M}, & i\le j,\\
  \frac12+\frac1{8M}, & i>j.
 \end{cases}
\end{align*}
An adaptive randomized algorithm may query arbitrary entries $(s,i)$, including multiple entries from the same row and repeated entries. If it identifies $J$ with probability at least $3/4$, then it requires $\Omega(M^2\log M)$ matrix-entry queries.
\end{lemma}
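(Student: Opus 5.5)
The plan is to reduce the adaptive matrix-query model to a model with \emph{fresh independent coin flips}. The bound then follows from a mutual-information (Fano) argument in which every informative query carries only $O(1/M^2)$ nats about $J$. This is the information-theoretic core of \citet[proof of Theorem~11]{foster2019complexity}. The only new point is that querying several entries of the same row, or the same entry twice, gives no extra power.

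\emph{Step 1 (reduction).} Conditional on $J=j$, all entries $Z_i^{(s)}$ over all $(s,i)$ are mutually independent. Rows are independent of each other, and coordinates within a row are independent. A query to an entry $(s,i)$ that was already queried returns a value that is a deterministic function of the transcript. A query to a new entry $(s,i)$ returns a $\pm1$ variable that, given $J$ and the transcript, is independent of the past and has law $\mathrm{Bern}(\tfrac12\mp\tfrac1{8M})$ according as $i\le J$ or $i>J$. The row index $s$ therefore plays no role beyond labeling. So any algorithm in the matrix model is simulated, with the same number of informative queries or fewer, by an algorithm that at each step picks a coordinate $i$ (possibly depending on its internal randomness $U$ and the past) and receives an independent biased coin for that coordinate. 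Repeats are answered from memory.

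\emph{Step 2 (per-query information).} Let $Y_1,\dots,Y_T$ be the answers, where $T$ is a deterministic upper bound on the number of queries. If a random or adaptive $T$ is needed, I will pass to a worst-case budget, or use Markov's inequality on the expected count and absorb the resulting constant into the success probability. Write $H_t$ for the history and $U$ for the internal randomness, which is independent of $J$. By the chain rule,
\begin{align*}
 I(J;U,Y_{1:T})=\sum_{t=1}^T I(J;Y_t\mid U,H_{t-1}).
\end{align*}
For a repeated entry the term is $0$. For a new entry at coordinate $i$, the conditional law of $Y_t$ given $J$ takes only the two values $P_\pm=\mathrm{Bern}(\tfrac12\pm\tfrac1{8M})$. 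Hence the term is at most $\max\mathrm{KL}(P_\pm\|P_\mp)$, which is at most the $\chi^2$-divergence, which is $O(1/M^2)$. Explicitly, $\chi^2\le (1/(4M))^2/(\tfrac14-\tfrac1{64M^2})\le c/M^2$. So $I(J;U,Y_{1:T})\le cT/M^2$.

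\emph{Step 3 (Fano).} $J$ is uniform on $M$ values, and the output is a function of $(U,Y_{1:T})$. Fano's inequality gives $\Pp(\text{error})\ge 1-\frac{I+\log 2}{\log M}$. Success probability at least $3/4$ forces $I\ge \tfrac34\log M-\log2\ge c'\log M$ for $M\ge16$. Combining with Step 2 gives $T\ge (c'/c)M^2\log M$.

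The main obstacle I expect is Step 1: making rigorous that the adaptive choice of \emph{which row} to query, including reuse of sampled rows, cannot correlate answers in an exploitable way. I would handle this by conditioning on $(J,U,H_{t-1})$ and using the product structure of the entry distribution, which makes the answer to a fresh entry independent of everything seen so far. The bound on the KL term in Step 2 then applies verbatim.
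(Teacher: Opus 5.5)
Your proposal is correct and follows essentially the same route as the paper: repeated entries add no information, each fresh entry contributes at most $\max\{\mathrm{KL}(P_+\|P_-),\mathrm{KL}(P_-\|P_+)\}=O(1/M^2)$ to the chain-rule sum, and Fano's inequality with $M\ge16$ then gives $q\gtrsim M^2\log M$. The only difference is cosmetic. You bound the KL divergence by the $\chi^2$-divergence, whereas the paper computes it directly as $2a\log\frac{1+2a}{1-2a}\le16a^2=\frac1{4M^2}$ with $a=\frac1{8M}$.
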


\begin{proof}
Let $a:=1/(8M)$ and let $\mathcal T_q$ be the transcript after $q$ queries, including the algorithm's internal randomness. The query location at step $t$ is determined by $\mathcal T_{t-1}$. A repeated query to an already observed entry gives no additional information. For a new entry, conditional on $\mathcal T_{t-1}$ its distribution is either $\operatorname{Ber}(\frac12-a)$ or $\operatorname{Ber}(\frac12+a)$, according to the queried coordinate and $J$. Hence
\begin{align*}
 I(J;Y_t\mid\mathcal T_{t-1})
 &\le \max\!\left\{
 D_{\rm KL}\!\left(\operatorname{Ber}(\tfrac12+a)\,\middle\|\,\operatorname{Ber}(\tfrac12-a)\right),
 D_{\rm KL}\!\left(\operatorname{Ber}(\tfrac12-a)\,\middle\|\,\operatorname{Ber}(\tfrac12+a)\right)
 \right\}\\
 &=2a\log\frac{1+2a}{1-2a}
 \le16a^2
 =\frac1{4M^2}.
\end{align*}
By the chain rule, $I(J;\mathcal T_q)\le q/(4M^2)$. If an estimator $\widehat J$ based on $\mathcal T_q$ has error probability at most $1/4$, Fano's inequality gives
\begin{align*}
 I(J;\mathcal T_q)
 \ge \frac34\log M-\log2
 \ge \frac12\log M,
\end{align*}
where the last inequality uses $M\ge16$. Therefore $q\ge2M^2\log M$, proving the claimed order.
\end{proof}

Each query to a sampled stochastic operator can be simulated using at most two matrix-entry queries. If $x\in[i/M,(i+1)/M]$, then
\begin{align*}
 F_{Z^{(s)}}(x)
 =\frac{(i+1-Mx)Z_i^{(s)}+(Mx-i)Z_{i+1}^{(s)}}{M},
\end{align*}
so only $Z_i^{(s)}$ and $Z_{i+1}^{(s)}$ are needed; outside $[0,1]$, only one boundary entry is needed. Repeatedly querying the same sampled stochastic operator corresponds to querying further entries of the same matrix row. Hence $Q$ stochastic-oracle queries reveal at most $2Q$ matrix entries, and Lemma~\ref{lb:lem:adaptive-nbs} implies that identification with probability at least $3/4$ requires
\begin{align}
 Q=\Omega(M^2\log M).
 \label{lb:eq:foster-lower-bound}
\end{align}

\subsection{Residual accuracy implies identification}\label{lb:sec:identify}

Define
\begin{align*}
 \tau_M:=\frac1{8M^2},
 \qquad
 I_j:=\left[\frac{j+\tfrac14}{M},\frac{j+\tfrac34}{M}\right].
\end{align*}
Since $A=0$, $\mathcal R(x)=|F_j(x)|$.

\begin{lemma}\label{lb:lem:hard-lower-bound}
For every $M\ge16$, any adaptive randomized algorithm satisfying
\begin{align}
 \sup_{0\le j<M}\E_j|F_j(\widehat X)|^2
 \le\frac1{256M^4}
 \label{lb:eq:expected-hard-target}
\end{align}
for the family in Subsection~\ref{lb:sec:family} requires $\Omega(M^2\log M)$ oracle queries.
\end{lemma}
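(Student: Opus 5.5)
The plan is to convert an algorithm satisfying \eqref{lb:eq:expected-hard-target} into an estimator of $J$ that succeeds with probability at least $3/4$. Then \eqref{lb:eq:foster-lower-bound} applies. The estimator uses the output $\widehat X$ and no additional oracle queries. Its construction is based on the intervals $I_j$ and the threshold $\tau_M$: we round $\widehat X$ to the unique index $\widehat J$ whose interval $I_{\widehat J}$ contains it, or declare failure if none does.

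\textbf{Step 1: small residual forces $\widehat X\in I_j$.} The explicit formula for $F_j$ in Lemma~\ref{lb:lem:parameters} shows that $|F_j(x)|\ge \frac1{4M^2}$ outside $[j/M,(j+1)/M]$. Inside this interval, $|F_j(x)|=\frac{|x-x_j^\star|}{2M}$. Hence $|F_j(x)|<\tau_M=\frac1{8M^2}$ forces $|x-x_j^\star|<\frac1{4M}$, that is, $x\in I_j$. So $\{|F_j(\widehat X)|<\tau_M\}\subseteq\{\widehat X\in I_j\}$.

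\textbf{Step 2: Markov's inequality.} Apply Markov to $|F_j(\widehat X)|^2$:
\begin{align*}
 \Pp_j\bigl(|F_j(\widehat X)|\ge\tau_M\bigr)
 \le \frac{\E_j|F_j(\widehat X)|^2}{\tau_M^2}
 \le \frac{1/(256M^4)}{1/(64M^4)}=\frac14 .
\end{align*}
Thus, under every $\Pp_j$, we have $\widehat X\in I_j$ with probability at least $3/4$. The intervals $I_0,\dots,I_{M-1}$ are pairwise disjoint, separated by gaps of length $\frac1{2M}$. Therefore the rounding rule satisfies $\widehat J=j$ on this event.

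\textbf{Step 3: transfer to noisy binary search.} The estimator is correct with probability at least $3/4$ for each $j$, and hence also when $J$ is uniform. This holds for the randomized adaptive algorithm run on oracle draws $F_{Z^{(s)}}$, with repeated access to previously sampled rows allowed. By the simulation argument preceding \eqref{lb:eq:foster-lower-bound}, $Q$ oracle queries cost at most $2Q$ matrix entries. Lemma~\ref{lb:lem:adaptive-nbs} then gives $Q=\Omega(M^2\log M)$.

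The only delicate point is the role of the expected number of queries. If the query count is random, we interpret ``requires'' as a worst-case bound on the number of queries, consistent with Lemma~\ref{lb:lem:adaptive-nbs}. If an expected-count version were needed, we would truncate at a constant multiple of the mean and lose a constant in the success probability. Everything else is the direct calculation above, and the constants $\frac1{256M^4}$ and $\tau_M$ are chosen exactly so that Markov's inequality yields $1/4$.
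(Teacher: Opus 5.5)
Your proposal is correct and follows essentially the same route as the paper: the threshold $\tau_M$ confines small-residual outputs to the pairwise disjoint intervals $I_j$, Markov's inequality applied to $|F_j(\widehat X)|^2$ bounds the failure probability by $1/4$, and the rounding estimator $\widehat J$ combined with the two-entries-per-query simulation yields the bound through Lemma~\ref{lb:lem:adaptive-nbs}. Your explicit passage from per-$j$ success to success under uniform $J$, and your remark on random query counts, are small refinements that the paper leaves implicit.
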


\begin{proof}
The expression for $F_j$ in the proof of Lemma~\ref{lb:lem:parameters} gives
\begin{align}
 |F_j(x)|\le\tau_M
 \quad\Longleftrightarrow\quad
 x\in I_j.
 \label{lb:eq:residual-identification}
\end{align}
The intervals $I_0,\ldots,I_{M-1}$ are pairwise disjoint. By Markov's inequality and \eqref{lb:eq:expected-hard-target},
\begin{align*}
 \Pp_j(\widehat X\notin I_j)
 &=\Pp_j\bigl(|F_j(\widehat X)|>\tau_M\bigr)\\
 &\le\frac{\E_j|F_j(\widehat X)|^2}{\tau_M^2}\\
 &\le\frac14.
\end{align*}
Define $\widehat J=\ell$ when $\widehat X\in I_\ell$, and assign any value when $\widehat X$ lies outside all $I_\ell$. Then
\begin{align*}
 \Pp_j(\widehat J=j)
 \ge \Pp_j(\widehat X\in I_j)
 \ge \frac34.
\end{align*}
Thus $\widehat J$ identifies $j$ with probability at least $3/4$, and \eqref{lb:eq:foster-lower-bound} gives the result.
\end{proof}

\subsection{Proof of Theorem~\ref{lb:thm:main}}

\begin{proof}
Fix $M\ge16$ and define on $\R^2$
$\widetilde F_Z(x,u):=(F_Z(x),3u)$ and
$\widetilde F_j(x,u):=(F_j(x),3u)$, with $\widetilde A=0$ and $\widetilde z_0=(0,0)$. By Lemma~\ref{lb:lem:parameters},
\begin{align*}
 \langle\widetilde F_j(x,u)-\widetilde F_j(y,v),(x-y,u-v)\rangle
 &=(F_j(x)-F_j(y))(x-y)+3|u-v|^2\ge0,\\
 \|\widetilde F_j(x,u)-\widetilde F_j(y,v)\|^2
 &\le\frac1{4M^2}|x-y|^2+9|u-v|^2
 \le9\|(x,u)-(y,v)\|^2.
\end{align*}
Thus $\widetilde F_j$ is monotone, and taking $x=y$ and $u\ne v$ shows that its Lipschitz modulus is exactly $L=3$. Its unique zero is $(x_j^\star,0)$, whose distance from $\widetilde z_0$ is at most $1$.

The lifted noise is $\widetilde\delta_Z(x,u)=(F_Z(x)-F_j(x),0)$, so
$\E_j\|\widetilde\delta_Z(x,u)\|^2\le1/M^2$ and
$\E_j\|\widetilde\delta_Z(x,u)-\widetilde\delta_Z(y,v)\|^2\le9\|(x,u)-(y,v)\|^2$.
Hence the lifted family satisfies the assumptions with
$D=1$, $L=3$, $\sigma^2=1/M^2$, and $L_\Delta=3$, and
$\mathcal R_j(x,u)^2=|F_j(x)|^2+9u^2$.

Each query to $\widetilde F_Z(x,u)$ is simulated by one query to $F_Z(x)$ followed by the deterministic coordinate $3u$, including repeated queries to a previously sampled operator. Therefore, at $\epsilon_M:=1/(256M^4)$, lifted residual accuracy implies the scalar criterion in Lemma~\ref{lb:lem:hard-lower-bound}, and hence requires $\Omega(M^2\log M)$ oracle queries. But
$LD/\sqrt{\epsilon_M}+\sigma^2/\epsilon_M=48M^2+256M^2=304M^2$,
so a uniform $O(LD/\sqrt\epsilon+\sigma^2/\epsilon)$ bound gives only $O(M^2)$ queries, a contradiction for sufficiently large $M$.
\end{proof}

\subsection{Proof of Corollary~\ref{lb:cor:fixed-query}}

\begin{proof}
Fix $0\le p<1$ and suppose that a fixed-query method satisfies, uniformly over the problem class, for some constant $C>0$ and every integer $N\ge2$,
\begin{align}
 \E[\mathcal R(\widehat z_N)^2]
 \le C\left(
 \frac{L^2D^2}{N^2}
 +\frac{\sigma^2\log^p N}{N}
 \right).
 \label{lb:eq:sublog-rate}
\end{align}
By the definition of a fixed-query method, there is a constant $r$ such that each iteration uses at most $r$ oracle queries.
Apply \eqref{lb:eq:sublog-rate} to the lifted hard family in the proof of Theorem~\ref{lb:thm:main}, for which
$L=3$, $D=1$, $\sigma^2=M^{-2}$, and $\epsilon_M=(256M^4)^{-1}$.
Choose
\begin{align*}
 N_M=\left\lceil K M^2(\log M)^p\right\rceil,
\end{align*}
where $K$ is a sufficiently large constant depending only on $C$ and $p$.
Since $\log N_M=O(\log M)$,
\begin{align*}
 C\frac{L^2D^2}{N_M^2}
 &=O\!\left(\frac{1}{M^4(\log M)^{2p}}\right),\\
 C\frac{\sigma^2\log^p N_M}{N_M}
 &=O\!\left(\frac{1}{K M^4}\right).
\end{align*}
Thus, after increasing $K$ if necessary, \eqref{lb:eq:sublog-rate} gives
$\E[\mathcal R(\widehat z_{N_M})^2]\le\epsilon_M$ for all sufficiently large $M$.
The method then uses at most
\begin{align*}
 rN_M=O\!\left(M^2(\log M)^p\right)
 =o(M^2\log M)
\end{align*}
oracle queries
 because $p<1$.
This contradicts the $\Omega(M^2\log M)$ lower bound in \eqref{lb:eq:foster-lower-bound} and Lemma~\ref{lb:lem:hard-lower-bound}.
\end{proof}

%% file: appendices/rrseg_fbf_proof.tex
\subsection{Preliminary bounds}

For each $k$, define $H_k(z):=F(z)+a_k(z-c_k)$ and let $r_k$ be the unique zero of $A+H_k$.
The operator $H_k$ is $a_k$-strongly monotone and $(L+a_k)$-Lipschitz.
We first derive bounds that do not depend on the particular horizon-dependent schedule.

\subsubsection{One-step stochastic forward-backward-forward bound}

\begin{lemma}[One-step stochastic forward-backward-forward contraction]
\label{rr:lem:fbf-clean}
Let $B:\cH\to\cH$ be $\mu$-strongly monotone and $M$-Lipschitz, let $A$ be maximally monotone, and let $r$ satisfy $0\in B(r)+A(r)$.
Assume $2\mu\le M$ and consider
\begin{align*}
 y&=J_{\eta A}\bigl(x-\eta(B(x)+\varepsilon)\bigr),\\
 x^+&=y-\eta\bigl(B(y)+\zeta-B(x)-\varepsilon\bigr),
\end{align*}
where $\E\varepsilon=0$, $\E\|\varepsilon\|^2\le\sigma^2$, and, conditional on $\varepsilon$,
\begin{align*}
 \E[\zeta\mid\varepsilon]=0,
 \qquad
 \E[\|\zeta\|^2\mid\varepsilon]\le\sigma^2.
\end{align*}
If $\eta M\le1/3$, then
\begin{align}
 \E\|x^+-r\|^2
 \le\left(1-\frac43\eta\mu\right)\|x-r\|^2
 +\frac52\eta^2\sigma^2.
 \label{rr:eq:fbf-clean}
\end{align}
\end{lemma}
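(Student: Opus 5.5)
The proof reduces the stochastic step to a deterministic-type Tseng inequality plus two noise contributions. Let $h\in A(y)$ be the element selected by the resolvent, so that $y+\eta h=x-\eta(B(x)+\varepsilon)$. Substituting this into the second line gives the compact form
\begin{align*}
 x^+-r=\bigl(x-r-\eta(B(y)+h)\bigr)-\eta\zeta .
\end{align*}
The points $y$ and $h$ are functions of $x$ and $\varepsilon$ only. Hence, conditional on $\varepsilon$, the cross term with $\zeta$ has zero mean, and $\E[\|\zeta\|^2\mid\varepsilon]\le\sigma^2$. This yields
\begin{align*}
 \E\|x^+-r\|^2\le \E\|x-r-\eta(B(y)+h)\|^2+\eta^2\sigma^2 .
\end{align*}
It therefore remains to show that $\E\|x-r-\eta(B(y)+h)\|^2\le(1-\frac43\eta\mu)\|x-r\|^2+\frac32\eta^2\sigma^2$.

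To do this, I expand the square and split $x-r=(x-y)+(y-r)$. Since $-B(r)\in A(r)$, monotonicity of $A$ and $\mu$-strong monotonicity of $B$ give
\begin{align*}
 \langle B(y)+h,y-r\rangle\ge\mu\|y-r\|^2 .
\end{align*}
For the other piece, I use $x-y=\eta(B(x)+\varepsilon+h)$ together with the polarization identity $-2\langle a,b\rangle+\|a\|^2=\|a-b\|^2-\|b\|^2$, applied with $a=\eta(B(y)+h)$ and $b=\eta(B(x)+\varepsilon+h)$. This produces the standard Tseng bound
\begin{align*}
 \|x-r-\eta(B(y)+h)\|^2\le\|x-r\|^2-2\eta\mu\|y-r\|^2-\|x-y\|^2+\eta^2\|B(y)-B(x)-\varepsilon\|^2 .
\end{align*}

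The error term must be split carefully to land on the constant $\frac52$; this is where the main care is needed. A plain $2/2$ Young split would give a noise coefficient of $3$ overall, which is too large. Instead I split with weights $3$ and $\frac32$:
\begin{align*}
 \|B(y)-B(x)-\varepsilon\|^2\le 3M^2\|x-y\|^2+\tfrac32\|\varepsilon\|^2 .
\end{align*}
Since $\eta M\le\frac13$, the term $3\eta^2M^2\|x-y\|^2$ is at most $\frac13\|x-y\|^2$, which leaves $-\frac23\|x-y\|^2$. The noise contributes $\frac32\eta^2\sigma^2$, and adding the $\eta^2\sigma^2$ from $\zeta$ gives exactly $\frac52\eta^2\sigma^2$.

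Finally, I convert $-2\eta\mu\|y-r\|^2-\frac23\|x-y\|^2$ into $-\frac43\eta\mu\|x-r\|^2$. I use
\begin{align*}
 \|x-r\|^2\le\tfrac32\|y-r\|^2+3\|x-y\|^2,
\end{align*}
which is Young's inequality with parameter $\frac12$. Multiplying by $\frac43\eta\mu$ gives $\frac43\eta\mu\|x-r\|^2\le 2\eta\mu\|y-r\|^2+4\eta\mu\|x-y\|^2$. The last term is at most $\frac23\|x-y\|^2$ because $\eta\mu\le\eta M/2\le\frac16$, using $2\mu\le M$; this is where that assumption enters. Taking the full expectation over $\varepsilon$ then yields \eqref{rr:eq:fbf-clean}.
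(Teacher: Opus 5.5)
Your proof is correct and follows essentially the same route as the paper. You use the same expansion, the same monotonicity and polarization identity to reach the Tseng-type inequality, and a Young split whose fixed weights $3$ and $\tfrac32$ are exactly the paper's $t$-dependent weights $1/t$ and $1/(1-t)$ at the worst case $t=\eta M=\tfrac13$; the only cosmetic difference is that you close the contraction with a Young inequality using $\eta\mu\le\tfrac16$, where the paper completes the square explicitly using $1-t-4\eta\mu\ge0$.
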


\begin{proof}
The resolvent step gives
\begin{align*}
 h_y:=\frac{x-y}{\eta}-B(x)-\varepsilon\in A(y).
\end{align*}
Choose $h_r=-B(r)\in A(r)$.
Then
\begin{align*}
 x^+=x-\eta\bigl(h_y+B(y)+\zeta\bigr).
\end{align*}
Conditioning on $\varepsilon$ and using the conditional mean of $\zeta$,
\begin{align*}
 \E[\|x^+-r\|^2\mid\varepsilon]
 =&\|x-r\|^2-2\eta\langle h_y+B(y),x-r\rangle\\
 &+\eta^2\|h_y+B(y)\|^2+\eta^2\E[\|\zeta\|^2\mid\varepsilon]\\
 \le&\|x-r\|^2-2\eta\mu\|y-r\|^2
 -2\eta\langle h_y+B(y),x-y\rangle\\
 &+\eta^2\|h_y+B(y)\|^2+\eta^2\sigma^2,
\end{align*}
where the inequality uses monotonicity of $A$ and strong monotonicity of $B$.
Moreover,
\begin{align*}
 &-2\eta\langle h_y+B(y),x-y\rangle
 +\eta^2\|h_y+B(y)\|^2\\
 &\qquad=-\|x-y\|^2+\eta^2\|\varepsilon+B(x)-B(y)\|^2.
\end{align*}
Let $t:=\eta M$.
Young's inequality and Lipschitz continuity of $B$ give
\begin{align*}
 \eta^2\|\varepsilon+B(x)-B(y)\|^2
 &\le\frac{\eta^2}{1-t}\|\varepsilon\|^2
 +\frac{\eta^2}{t}\|B(x)-B(y)\|^2\\
 &\le\frac{\eta^2}{1-t}\|\varepsilon\|^2+t\|x-y\|^2.
\end{align*}
Taking expectation over $\varepsilon$ yields
\begin{align}
 \E\|x^+-r\|^2
 \le&\|x-r\|^2-2\eta\mu\E\|y-r\|^2
 -(1-t)\E\|x-y\|^2\notag\\
 &+\frac{2-t}{1-t}\eta^2\sigma^2.
 \label{rr:eq:fbf-before-contraction}
\end{align}
Write $s:=\eta\mu$.
Since $2\mu\le M$, we have $2s\le t$.
Together with $t\le1/3$, this gives $1-t-4s\ge1-3t\ge0$.
Hence
\begin{align*}
 &2s\|y-r\|^2+(1-t)\|x-y\|^2-\frac43s\|x-r\|^2\\
 &\quad=(1-t+2s)
 \left\|x-y-\frac{2s}{1-t+2s}(x-r)\right\|^2\\
 &\qquad+\frac{2s(1-t-4s)}{3(1-t+2s)}\|x-r\|^2
 \ge0.
\end{align*}
Also,
\begin{align*}
 \frac52-\frac{2-t}{1-t}
 =\frac{1-3t}{2(1-t)}\ge0.
\end{align*}
Substituting the last two inequalities into \eqref{rr:eq:fbf-before-contraction} proves \eqref{rr:eq:fbf-clean}.
\end{proof}

For the schedules considered below, $a_k\le L$, so $2a_k\le L+a_k$.
Applying Lemma~\ref{rr:lem:fbf-clean} conditionally with
\begin{align*}
 B(z)=H_k(z),
 \qquad
 \mu=a_k,
 \qquad
 M=L+a_k,
\end{align*}
gives, whenever $\eta_k(L+a_k)\le1/3$,
\begin{align}
 \E\|z_{k+1}-r_k\|^2
 \le\left(1-\frac43\eta_ka_k\right)\E\|z_k-r_k\|^2
 +\frac52\eta_k^2\sigma^2.
 \label{rr:eq:fbf-tracking}
\end{align}

\subsubsection{Effect of recentering}

For each $k$, choose $h_k\in A(r_k)$ such that
\begin{align}
 F(r_k)+h_k+a_k(r_k-c_k)=0.
 \label{rr:eq:root-selection}
\end{align}
The recentering update gives
\begin{align}
 a_{k+1}(z-c_{k+1})
 =a_k(z-c_k)+\lambda_k(z-z_{k+1}).
 \label{rr:eq:recenter-identity}
\end{align}
Subtracting \eqref{rr:eq:root-selection} at $k$ and $k+1$ and using \eqref{rr:eq:recenter-identity},
\begin{align}
 \lambda_k(z_{k+1}-r_k)
 =&F(r_{k+1})+h_{k+1}-F(r_k)-h_k\notag\\
 &+a_{k+1}(r_{k+1}-r_k).
 \label{rr:eq:root-difference}
\end{align}

\begin{lemma}[Bounds from recentering]
\label{rr:lem:transport}
For every RRSEG update,
\begin{align}
 \|z_{k+1}-r_{k+1}\|
 &\le\|z_{k+1}-r_k\|,
 \label{rr:eq:distance-transport}\\
 \|r_{k+1}-r_k\|
 &\le\frac{\lambda_k}{a_{k+1}}\|z_{k+1}-r_k\|.
 \label{rr:eq:root-motion}
\end{align}
Moreover, if
\begin{align*}
 g_k:=F(r_k)+h_k+a_0(r_k-z_0),
\end{align*}
then $g_0=0$ and
\begin{align}
 \|g_{k+1}-g_k\|
 \le\lambda_k\|z_{k+1}-r_k\|.
 \label{rr:eq:residual-transport}
\end{align}
\end{lemma}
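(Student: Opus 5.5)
We prove the three bounds in Lemma~\ref{rr:lem:transport} from the root-difference identity \eqref{rr:eq:root-difference}, monotonicity of $F+A$, and strong monotonicity of $H_{k+1}$.

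\textbf{Root motion \eqref{rr:eq:root-motion}.} Take the inner product of \eqref{rr:eq:root-difference} with $r_{k+1}-r_k$. Since $F(r_{k+1})+h_{k+1}\in(F+A)(r_{k+1})$ and $F(r_k)+h_k\in(F+A)(r_k)$, monotonicity makes the first group of terms contribute a nonnegative amount. Hence
\begin{align*}
 \lambda_k\langle z_{k+1}-r_k,\,r_{k+1}-r_k\rangle\ge a_{k+1}\|r_{k+1}-r_k\|^2 .
\end{align*}
Cauchy--Schwarz then gives \eqref{rr:eq:root-motion}. If $\lambda_k=0$, this forces $r_{k+1}=r_k$, and \eqref{rr:eq:distance-transport} holds trivially, so assume $\lambda_k>0$ below.

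\textbf{Distance transport \eqref{rr:eq:distance-transport}.} Set $u:=r_{k+1}-r_k$ and $e:=z_{k+1}-r_k$. The same inequality reads $\lambda_k\langle e,u\rangle\ge a_{k+1}\|u\|^2$. We need to show $\|e-u\|^2\le\|e\|^2$, that is, $\|u\|^2\le 2\langle e,u\rangle$. Since $\lambda_k\le a_{k+1}$ because $a_{k+1}=a_k+\lambda_k$ with $a_k>0$, we get $\langle e,u\rangle\ge (a_{k+1}/\lambda_k)\|u\|^2\ge\|u\|^2$, which gives the claim with room to spare. Equivalently, $r_{k+1}=J(\cdot)$ is a firmly nonexpansive-type map of $z_{k+1}$ anchored at $r_k$.

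\textbf{Residual transport \eqref{rr:eq:residual-transport}.} First, $g_0=0$ by \eqref{rr:eq:root-selection} with $c_0=z_0$. For the increment, unrolling the recentering gives $a_k(z-c_k)=a_0(z-z_0)+\sum_{i<k}\lambda_i(z-z_{i+1})$. Substituting this into $g_k=-a_k(r_k-c_k)+a_0(r_k-z_0)$ yields
\begin{align*}
 g_k=-\sum_{i<k}\lambda_i(r_k-z_{i+1}).
\end{align*}
Therefore
\begin{align*}
 g_{k+1}-g_k=\lambda_k(z_{k+1}-r_{k+1})-(a_{k+1}-a_0)(r_{k+1}-r_k).
\end{align*}
Taking norms gives only $\lambda_k\|e\|+(a_{k+1}-a_0)\frac{\lambda_k}{a_{k+1}}\|e\|$, which is too weak.

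Instead, write $g_{k+1}-g_k=[F(r_{k+1})+h_{k+1}-F(r_k)-h_k]+a_0u$. By \eqref{rr:eq:root-difference}, this equals $\lambda_k e-(a_{k+1}-a_0)u$. Let $w:=F(r_{k+1})+h_{k+1}-F(r_k)-h_k$, so that $\lambda_k e=w+a_{k+1}u$ with $\langle w,u\rangle\ge0$. Then
\begin{align*}
 \|w+a_0u\|^2=\|w\|^2+2a_0\langle w,u\rangle+a_0^2\|u\|^2
 \le\|w\|^2+2a_{k+1}\langle w,u\rangle+a_{k+1}^2\|u\|^2=\lambda_k^2\|e\|^2 ,
\end{align*}
using $0\le a_0\le a_{k+1}$ and $\langle w,u\rangle\ge0$. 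This proves \eqref{rr:eq:residual-transport}.

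The main obstacle is this last step, namely avoiding the lossy triangle inequality by expanding the square and exploiting the monotone cross term.
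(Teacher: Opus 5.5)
Your proof is correct and follows essentially the paper's route. Root motion and distance transport both come from the same monotonicity inner product applied to \eqref{rr:eq:root-difference}. Residual transport uses the same square expansion with a nonnegative monotone cross term; you expand around the $F+A$ increment $w$, whereas the paper expands around $g_{k+1}-g_k$, which is equivalent. One small slip, harmless because you discard that line: the unrolled increment should read $\lambda_k(z_{k+1}-r_k)-(a_{k+1}-a_0)(r_{k+1}-r_k)$, or equivalently $\lambda_k(z_{k+1}-r_{k+1})-(a_k-a_0)(r_{k+1}-r_k)$, not the mixture you wrote.
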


\begin{proof}
The monotonicity of $F+A$ gives
\begin{align*}
 \left\langle F(r_{k+1})+h_{k+1}-F(r_k)-h_k,r_{k+1}-r_k\right\rangle\ge0.
\end{align*}
If $\lambda_k=0$, then \eqref{rr:eq:root-difference} and strong monotonicity give $r_{k+1}=r_k$, so all conclusions are immediate.
Suppose $\lambda_k>0$.
Taking the inner product of \eqref{rr:eq:root-difference} with $r_{k+1}-r_k$ gives
\begin{align*}
 a_{k+1}\|r_{k+1}-r_k\|^2
 \le\lambda_k\langle z_{k+1}-r_k,r_{k+1}-r_k\rangle,
\end{align*}
which proves \eqref{rr:eq:root-motion}.
Using the same inequality,
\begin{align*}
 &\|z_{k+1}-r_k\|^2-\|z_{k+1}-r_{k+1}\|^2\\
 &\quad=2\langle z_{k+1}-r_k,r_{k+1}-r_k\rangle-\|r_{k+1}-r_k\|^2\\
 &\quad\ge\left(\frac{2a_{k+1}}{\lambda_k}-1\right)\|r_{k+1}-r_k\|^2\ge0,
\end{align*}
which proves \eqref{rr:eq:distance-transport}.

The identity \eqref{rr:eq:root-difference} can also be written as
\begin{align*}
 \lambda_k(z_{k+1}-r_k)
 =(g_{k+1}-g_k)+(a_{k+1}-a_0)(r_{k+1}-r_k).
\end{align*}
Moreover,
\begin{align*}
 \langle g_{k+1}-g_k,r_{k+1}-r_k\rangle
 \ge a_0\|r_{k+1}-r_k\|^2\ge0.
\end{align*}
Therefore
\begin{align*}
 \lambda_k^2\|z_{k+1}-r_k\|^2
 \ge\|g_{k+1}-g_k\|^2,
\end{align*}
which proves \eqref{rr:eq:residual-transport}.
\end{proof}

\subsubsection{Schedule-free residual bounds}

\begin{proposition}[Residual bound at the regularized zero]
\label{rr:prop:schedule-free}
For every $m\ge1$,
\begin{align}
 \left(\E[\mathcal R(r_m)^2]\right)^{1/2}
 &\le\left(\E\|F(r_m)+h_m\|^2\right)^{1/2}\notag\\
 &\le a_0D+
 \sum_{k=0}^{m-1}\lambda_k
 \left(\E\|z_{k+1}-r_k\|^2\right)^{1/2}.
 \label{rr:eq:root-residual}
\end{align}
\end{proposition}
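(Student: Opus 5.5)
The plan uses the auxiliary sequence $g_k:=F(r_k)+h_k+a_0(r_k-z_0)$ from Lemma~\ref{rr:lem:transport}, together with a telescoping argument.

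\textbf{First inequality.} Since $h_m\in A(r_m)$, the vector $F(r_m)+h_m$ lies in $(F+A)(r_m)$. Hence $\mathcal R(r_m)\le\|F(r_m)+h_m\|$ pointwise, and squaring and taking expectations gives the first inequality.

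\textbf{Reduction to $g_m$.} By the triangle inequality,
\begin{align*}
\|F(r_m)+h_m\|\le\|g_m\|+a_0\|r_m-z_0\|.
\end{align*}
So it suffices to bound $\|g_m\|$ by the transport sum and to show $a_0\|r_m-z_0\|\le a_0D$.

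\textbf{Bounding $\|g_m\|$.} Lemma~\ref{rr:lem:transport} gives $g_0=0$. Telescoping and \eqref{rr:eq:residual-transport} then give, pathwise,
\begin{align*}
\|g_m\|\le\sum_{k=0}^{m-1}\lambda_k\|z_{k+1}-r_k\|.
\end{align*}
Apply Minkowski's inequality in $L^2(\Pp)$: the root-mean-square of a sum is at most the sum of the root-mean-squares. This yields the transport term $\sum_k\lambda_k(\E\|z_{k+1}-r_k\|^2)^{1/2}$.

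\textbf{The anchor term, a preliminary attempt.} One would like $\|r_m-z_0\|\le D$ pathwise, via the monotonicity identity
\begin{align*}
\langle g_m-a_0(r_m-z_0),\,r_m-z_\star\rangle\ge0,
\end{align*}
where $g_m-a_0(r_m-z_0)=F(r_m)+h_m\in(F+A)(r_m)$ and $0\in(F+A)(z_\star)$. This only gives
\begin{align*}
a_0\|r_m-z_\star\|^2\le a_0\langle z_0-z_\star,r_m-z_\star\rangle+\langle g_m,r_m-z_\star\rangle,
\end{align*}
and the $g_m$ term is not controlled.

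\textbf{The anchor term, the route I would take.} I would avoid bounding $\|r_m-z_0\|$ and instead use a sharper decomposition: $F(r_m)+h_m$ is the projection-type residual of a regularized problem with anchor $z_0$ and perturbation $g_m$. Consider $\bar r:=J_{\frac1{a_0}(F+A)}(z_0)$, the exact zero of the $a_0$-regularization. It satisfies $g(\bar r)=0$ and $\|F(\bar r)+\bar h\|=a_0\|\bar r-z_0\|\le a_0\|z_0-z_\star\|\le a_0D$, by firm nonexpansiveness of the resolvent with fixed point $z_\star$. Write $r_m=J_{\frac1{a_0}(F+A)}(z_0+g_m/a_0)$. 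Firm nonexpansiveness of this resolvent, i.e.\ monotonicity of $F+A$ between $r_m$ and $\bar r$, gives
\begin{align*}
\|(F(r_m)+h_m)-(F(\bar r)+\bar h)\|\le\|g_m\|,
\end{align*}
since the map $w\mapsto a_0(w-J(w))$ is nonexpansive. Therefore
\begin{align*}
\|F(r_m)+h_m\|\le a_0D+\|g_m\|.
\end{align*}
Combining this with the Minkowski step above proves \eqref{rr:eq:root-residual}.

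\textbf{Main obstacle.} The delicate step is the anchor term. The naive triangle inequality via $\|r_m-z_0\|$ fails, and the resolvent nonexpansiveness of $a_0(I-J)$ is the key to making it work.
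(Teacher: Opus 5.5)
Your proof is correct and is essentially the paper's argument: your $\bar r=J_{\frac1{a_0}(F+A)}(z_0)$ is just $r_0$, and the paper's inequality $\|g_m-a_0(r_m-r_0)\|\le\|g_m\|$, derived from $\langle g_m,r_m-r_0\rangle\ge a_0\|r_m-r_0\|^2$, is exactly your bound $\|(F(r_m)+h_m)-(F(r_0)+h_0)\|\le\|g_m\|$; it is then combined, as in your proposal, with $\|F(r_0)+h_0\|=a_0\|z_0-r_0\|\le a_0D$, the telescoped transport bound, and Minkowski. One small slip: $w\mapsto a_0(w-J(w))$ is $a_0$-Lipschitz rather than nonexpansive, and this is precisely what turns the input gap $\|g_m\|/a_0$ into the output bound $\|g_m\|$.
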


\begin{proof}
Let $z_\star$ solve the original inclusion and choose $h_\star\in A(z_\star)$ such that $F(z_\star)+h_\star=0$.
At $k=0$, \eqref{rr:eq:root-selection} gives
\begin{align*}
 F(r_0)+h_0=a_0(z_0-r_0).
\end{align*}
Monotonicity of $F+A$ gives
\begin{align*}
 0
 &\le\langle F(r_0)+h_0-F(z_\star)-h_\star,r_0-z_\star\rangle\\
 &=a_0\langle z_0-r_0,z_0-z_\star\rangle-a_0\|z_0-r_0\|^2,
\end{align*}
so
\begin{align}
 \|z_0-r_0\|\le D.
 \label{rr:eq:initial-root-distance}
\end{align}

By \eqref{rr:eq:residual-transport} and $g_0=0$,
\begin{align}
 \|g_m\|
 \le\sum_{k=0}^{m-1}\lambda_k\|z_{k+1}-r_k\|.
 \label{rr:eq:g-telescope}
\end{align}
Monotonicity of $F+A$ also gives
\begin{align*}
 \langle g_m,r_m-r_0\rangle
 \ge a_0\|r_m-r_0\|^2.
\end{align*}
Hence
\begin{align*}
 \|g_m-a_0(r_m-r_0)\|^2
 &\le\|g_m\|^2.
\end{align*}
Since
\begin{align*}
 F(r_m)+h_m
 =g_m-a_0(r_m-r_0)+a_0(z_0-r_0),
\end{align*}
we obtain
\begin{align*}
 \|F(r_m)+h_m\|
 \le a_0D+\|g_m\|.
\end{align*}
Combining this inequality with \eqref{rr:eq:g-telescope}, taking $L_2$ norms, and applying Minkowski's inequality proves \eqref{rr:eq:root-residual}.
\end{proof}

The final composite output will be obtained from the forward-backward displacement.

\begin{lemma}[Forward-backward displacement]
\label{rr:lem:fb-displacement}
Let
\begin{align*}
 p_m:=J_{\frac1L A}\left(z_m-\frac1L F(z_m)\right).
\end{align*}
Then
\begin{align}
 L\left(\E\|z_m-p_m\|^2\right)^{1/2}
 \le&\sqrt2L\left(\E\|z_m-r_m\|^2\right)^{1/2}
 +\left(\E\|F(r_m)+h_m\|^2\right)^{1/2}\notag\\
 \le&\sqrt2L\left(\E\|z_m-r_m\|^2\right)^{1/2}
 +a_0D
 +\sum_{k=0}^{m-1}\lambda_k
 \left(\E\|z_{k+1}-r_k\|^2\right)^{1/2}.
 \label{rr:eq:fb-residual}
\end{align}
\end{lemma}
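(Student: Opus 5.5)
We aim to prove the first inequality of \eqref{rr:eq:fb-residual} pathwise, then take $L_2$ norms and apply Minkowski's inequality. The second inequality follows directly from Proposition~\ref{rr:prop:schedule-free}.

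\textbf{Step 1: an auxiliary forward-backward point at $r_m$.} Define
\begin{align*}
 \tilde p_m:=J_{\frac1L A}\left(r_m-\frac1L F(r_m)\right).
\end{align*}
The map $T:=J_{\frac1L A}\circ(I-\frac1L F)$ is Lipschitz with a constant we will control. Since $h_m\in A(r_m)$, we have $r_m=J_{\frac1L A}\big(r_m+\frac1L h_m\big)$. Nonexpansiveness of the resolvent therefore gives
\begin{align*}
 \|r_m-\tilde p_m\|\le\frac1L\|F(r_m)+h_m\|.
\end{align*}

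\textbf{Step 2: comparing the displacements at $z_m$ and $r_m$.} By the triangle inequality,
\begin{align*}
 \|z_m-p_m\|\le\|(z_m-p_m)-(r_m-\tilde p_m)\|+\|r_m-\tilde p_m\|.
\end{align*}
Set $x:=z_m-\frac1LF(z_m)$ and $x':=r_m-\frac1LF(r_m)$. Then $z_m-p_m=(I-J)(x)+\frac1LF(z_m)$, and similarly for $r_m$. It is cleaner to write $\Delta:=z_m-r_m$ and use firm nonexpansiveness of $J:=J_{\frac1LA}$: both $J$ and $I-J$ are firmly nonexpansive, and
\begin{align*}
 \|J(x)-J(x')\|^2+\|(I-J)(x)-(I-J)(x')\|^2\le\|x-x'\|^2.
\end{align*}
We have
\begin{align*}
 (z_m-p_m)-(r_m-\tilde p_m)=\Delta-(J(x)-J(x')).
\end{align*}
Monotonicity and Lipschitz continuity of $F$ give
\begin{align*}
 \|x-x'\|^2=\|\Delta\|^2-\tfrac2L\langle F(z_m)-F(r_m),\Delta\rangle+\tfrac1{L^2}\|F(z_m)-F(r_m)\|^2\le 2\|\Delta\|^2.
\end{align*}
So $\|J(x)-J(x')\|\le\sqrt2\|\Delta\|$. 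A crude bound would give $(1+\sqrt2)\|\Delta\|$, so we need a sharper estimate.

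\textbf{Main obstacle: obtaining the constant $\sqrt2$.} We handle this by expanding
\begin{align*}
 \|\Delta-(J(x)-J(x'))\|^2=\|\Delta\|^2-2\langle\Delta,Jx-Jx'\rangle+\|Jx-Jx'\|^2.
\end{align*}
Write $\Delta=(x-x')+\frac1L(F(z_m)-F(r_m))$. Firm nonexpansiveness gives $\langle x-x',Jx-Jx'\rangle\ge\|Jx-Jx'\|^2$, so
\begin{align*}
 \|\Delta-(Jx-Jx')\|^2\le\|\Delta\|^2-\|Jx-Jx'\|^2-\tfrac2L\langle F(z_m)-F(r_m),Jx-Jx'\rangle.
\end{align*}
Applying Young's inequality to the last term gives
\begin{align*}
 -\tfrac2L\langle F(z_m)-F(r_m),Jx-Jx'\rangle\le\|Jx-Jx'\|^2+\tfrac1{L^2}\|F(z_m)-F(r_m)\|^2\le\|Jx-Jx'\|^2+\|\Delta\|^2.
\end{align*}
Combining the two bounds yields $\|\Delta-(Jx-Jx')\|^2\le2\|\Delta\|^2$, which is exactly the $\sqrt2$ constant.

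\textbf{Step 3: assembling the bound.} Multiplying by $L$ gives the pathwise bound
\begin{align*}
 L\|z_m-p_m\|\le\sqrt2L\|z_m-r_m\|+\|F(r_m)+h_m\|.
\end{align*}
Taking $L_2$ norms and applying Minkowski's inequality yields the first inequality of \eqref{rr:eq:fb-residual}. Inserting the middle bound of \eqref{rr:eq:root-residual} from Proposition~\ref{rr:prop:schedule-free} then yields the second inequality.
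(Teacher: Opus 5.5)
Your proof is correct and reaches the same constant as the paper, but through a different decomposition.

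\textbf{The paper's route.} The paper never introduces an auxiliary point like your $\tilde p_m$. It writes $r_m=J_{\frac1L A}\bigl(r_m-\frac1L F(r_m)+q\bigr)$ with $q=(F(r_m)+h_m)/L$, so the residual is already built into the resolvent input. It then applies firm nonexpansiveness once, to $p_m$ and $r_m$, obtaining $\|s\|^2\le\langle s,d-f-q\rangle$, where $s=p_m-r_m$, $d=z_m-r_m$, and $f=(F(z_m)-F(r_m))/L$. This places $s$ in the ball of radius $\frac12\|d-f-q\|$ centred at $\frac12(d-f-q)$. The parallelogram identity then gives $\|z_m-p_m\|\le\sqrt{\|d\|^2+\|f+q\|^2}$, which is finally relaxed to $\sqrt2\|d\|+\|q\|$.

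\textbf{Your route.} You split by the triangle inequality into two separately reusable facts. First, the forward-backward displacement map $z\mapsto z-J_{\frac1L A}\bigl(z-\frac1L F(z)\bigr)$ is $\sqrt2$-Lipschitz, by firm nonexpansiveness plus Young's inequality. Second, its value at $r_m$ is at most $\frac1L\|F(r_m)+h_m\|$, by plain nonexpansiveness.

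\textbf{What each buys.} Your version is more modular and avoids the ball/parallelogram geometry. The paper's version handles the residual inside a single inequality and keeps a marginally finer intermediate quantity before relaxing, since $\sqrt{\|d\|^2+\|f+q\|^2}\le\sqrt{\|d\|^2+\|f\|^2}+\|q\|$. Neither argument needs monotonicity of $F$ in the essential step. In particular, the estimate $\|x-x'\|^2\le2\|\Delta\|^2$ you record in Step~2 plays no role in your final chain, which uses only firm nonexpansiveness of the resolvent and $L$-Lipschitz continuity of $F$.
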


\begin{proof}
Write
\begin{align*}
 d&:=z_m-r_m,
 &
 f&:=\frac{F(z_m)-F(r_m)}{L},
 &
 q&:=\frac{F(r_m)+h_m}{L}.
\end{align*}
Since $h_m\in A(r_m)$,
\begin{align*}
 r_m
 =J_{\frac1L A}\left(r_m-\frac1L F(r_m)+q\right).
\end{align*}
Let $s:=p_m-r_m$.
Firm nonexpansiveness of the resolvent gives
\begin{align*}
 \|s\|^2\le\langle s,d-f-q\rangle,
\end{align*}
which is equivalent to
\begin{align*}
 \left\|s-\frac12(d-f-q)\right\|
 \le\frac12\|d-f-q\|.
\end{align*}
Therefore
\begin{align*}
 \|z_m-p_m\|
 &=\|d-s\|\\
 &\le\frac12\bigl(\|d+f+q\|+\|d-f-q\|\bigr)\\
 &\le\sqrt{\|d\|^2+\|f+q\|^2}\\
 &\le\sqrt{\|d\|^2+\bigl(\|d\|+\|q\|\bigr)^2}\\
 &\le\sqrt2\|d\|+\|q\|.
\end{align*}
Here the third line follows from the parallelogram identity, the fourth uses the $L$-Lipschitz continuity of $F$, and the last uses
$\sqrt{a^2+(a+b)^2}\le\sqrt2a+b$ for $a,b\ge0$.
Taking $L_2$ norms gives the first inequality in \eqref{rr:eq:fb-residual}; the second follows from Proposition~\ref{rr:prop:schedule-free}.
\end{proof}

\subsection{Tracking bound and schedule}

The schedule-free bounds contain the sum
$\sum_k\lambda_k(\E\|z_{k+1}-r_k\|^2)^{1/2}$.
For the deterministic part, consider the envelope $(a_0/a_k)^pD^2$.
When $a_N=L$,
\begin{align*}
 a_0^{p/2}\sum_{k=0}^{N-1}\frac{\lambda_k}{a_{k+1}^{p/2}}
 &\le a_0^{p/2}\int_{a_0}^{L}\frac{da}{a^{p/2}}\\
 &=\frac{2L}{p-2}\left[
 \frac{a_0}{L}-\left(\frac{a_0}{L}\right)^{p/2}
 \right]
\end{align*}
for $p>2$, so any $p>2$ keeps this contribution of order $a_0D$.
For the stochastic part, an envelope proportional to $\sigma^2/a_k^2$ makes the same sum proportional to
$\sum_k\lambda_k/a_{k+1}$ and hence logarithmic.
This motivates the two envelopes used below.

Let $Q\ge1$, $\gamma>0$, and define
\begin{align}
 \eta_k&=\frac1{3(L+Qa_k)},\notag\\
 a_{k+1}
 &=\min\left\{L,\frac{a_k}{1-\gamma\eta_ka_k}\right\},
 \qquad
 \lambda_k=a_{k+1}-a_k.
 \label{rr:eq:power-schedule}
\end{align}
Then
\begin{align}
 \eta_k(L+a_k)&\le\frac13,
 &
 \eta_ka_k&\le\frac1{3(Q+1)}.
 \label{rr:eq:stepsize-bounds}
\end{align}

\begin{lemma}[One-step envelope bounds]
\label{rr:lem:power-closure}
Let $p>2$, $0<\gamma<2/3$, and $p\gamma\le4/3$.
Under \eqref{rr:eq:power-schedule},
\begin{align}
 \left(1-\frac43\eta_ka_k\right)
 \left(\frac{a_0}{a_k}\right)^p
 &\le\left(\frac{a_0}{a_{k+1}}\right)^p,
 \label{rr:eq:power-det}\\
 \left(1-\frac43\eta_ka_k\right)
 \frac{5}{4(2-3\gamma)(Q+1)a_k^2}
 +\frac52\eta_k^2
 &\le\frac{5}{4(2-3\gamma)(Q+1)a_{k+1}^2}.
 \label{rr:eq:power-noise}
\end{align}
\end{lemma}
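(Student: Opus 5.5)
Both inequalities follow from a single lower bound on the ratio $a_k/a_{k+1}$ together with the stepsize bounds \eqref{rr:eq:stepsize-bounds}. Throughout, write $x:=\eta_ka_k$. By \eqref{rr:eq:stepsize-bounds} and $Q\ge1$, we have $0<x\le\frac1{3(Q+1)}\le\frac16$. Since $\gamma<2/3$, this gives $\gamma x<1$, so $\frac{a_k}{1-\gamma x}$ is a well-defined positive number.

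\textbf{Key ratio bound.} We first show that in both branches of the $\min$ in \eqref{rr:eq:power-schedule},
\begin{align*}
 \frac{a_k}{a_{k+1}}\ge 1-\gamma x .
\end{align*}
If $a_{k+1}=\frac{a_k}{1-\gamma x}$, this holds with equality. If instead the cap is active, then $a_{k+1}=L\le\frac{a_k}{1-\gamma x}$, which again gives $\frac{1}{a_{k+1}}\ge\frac{1-\gamma x}{a_k}$. We also use $a_k\le L$ along the schedule, which holds by induction from $a_0<L$. This ensures $a_{k+1}\ge a_k>0$, so all quantities are positive and $p$-th powers preserve the inequality.

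\textbf{Deterministic envelope \eqref{rr:eq:power-det}.} Raising the ratio bound to the power $p$ gives
\begin{align*}
 \left(\frac{a_0}{a_{k+1}}\right)^p\ge(1-\gamma x)^p\left(\frac{a_0}{a_k}\right)^p .
\end{align*}
Since $p>2\ge1$ and $\gamma x\in(0,1)$, Bernoulli's inequality yields $(1-\gamma x)^p\ge1-p\gamma x$. The assumption $p\gamma\le4/3$ then gives $1-p\gamma x\ge1-\frac43x$, which proves \eqref{rr:eq:power-det}.

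\textbf{Noise envelope \eqref{rr:eq:power-noise}.} Set $c:=\frac{5}{4(2-3\gamma)(Q+1)}$, which is positive because $\gamma<2/3$. Squaring the ratio bound and using $(1-\gamma x)^2\ge1-2\gamma x$ gives
\begin{align*}
 \frac{c}{a_{k+1}^2}\ge\frac{c(1-2\gamma x)}{a_k^2}.
\end{align*}
Multiplying the target inequality by $a_k^2$ and writing $\eta_k^2a_k^2=x^2$, it therefore suffices to show
\begin{align*}
 \frac52x^2\le c\left(\frac43-2\gamma\right)x=\frac{2c}{3}(2-3\gamma)\,x=\frac{5}{6(Q+1)}\,x .
\end{align*}
This is equivalent to $x\le\frac{1}{3(Q+1)}$, which is exactly the second bound in \eqref{rr:eq:stepsize-bounds}.

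The only delicate point is that the constant $\frac{5}{4(2-3\gamma)(Q+1)}$ makes this last inequality tight. One must therefore use only the first-order Bernoulli bound $(1-\gamma x)^2\ge1-2\gamma x$ and no cruder estimate; with that bound, the constant is precisely what closes the argument.
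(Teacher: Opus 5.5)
Your proposal is correct and follows essentially the same route as the paper. For \eqref{rr:eq:power-det}, both use the ratio bound $a_k/a_{k+1}\ge1-\gamma\eta_ka_k$ together with Bernoulli's inequality and $p\gamma\le4/3$; for \eqref{rr:eq:power-noise}, both use $(a_k/a_{k+1})^2\ge1-2\gamma\eta_ka_k$ and then $\eta_ka_k\le\frac1{3(Q+1)}$, where the paper writes the squared bound as $\frac43\eta_ka_k-2u_k+u_k^2\ge(\frac43-2\gamma)\eta_ka_k$ with $u_k=1-a_k/a_{k+1}$.
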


\begin{proof}
The update gives
\begin{align*}
 \frac{a_k}{a_{k+1}}\ge1-\gamma\eta_ka_k.
\end{align*}
Bernoulli's inequality and $p\gamma\le4/3$ therefore give
\begin{align*}
 \left(\frac{a_k}{a_{k+1}}\right)^p
 \ge(1-\gamma\eta_ka_k)^p
 \ge1-p\gamma\eta_ka_k
 \ge1-\frac43\eta_ka_k,
\end{align*}
which proves \eqref{rr:eq:power-det}.

For \eqref{rr:eq:power-noise}, let $u_k:=1-a_k/a_{k+1}$.
Then $0\le u_k\le\gamma\eta_ka_k$ and
\begin{align*}
 \frac{a_k^2}{a_{k+1}^2}
 -\left(1-\frac43\eta_ka_k\right)
 &=\frac43\eta_ka_k-2u_k+u_k^2\\
 &\ge\left(\frac43-2\gamma\right)\eta_ka_k.
\end{align*}
Hence
\begin{align*}
 &\frac{5}{4(2-3\gamma)(Q+1)a_{k+1}^2}
 -\left(1-\frac43\eta_ka_k\right)
 \frac{5}{4(2-3\gamma)(Q+1)a_k^2}\\
 &\quad\ge\frac{5\eta_k}{6(Q+1)a_k}
 \ge\frac52\eta_k^2,
\end{align*}
where the last inequality follows from \eqref{rr:eq:stepsize-bounds}.
\end{proof}

\begin{lemma}[Tracking bound]
\label{rr:lem:power-tracking}
Let $p>2$, $0<\gamma<2/3$, and $p\gamma\le4/3$.
Under \eqref{rr:eq:power-schedule},
\begin{align}
 \E\|z_k-r_k\|^2
 &\le\left(\frac{a_0}{a_k}\right)^pD^2
 +\frac{5\sigma^2}{4(2-3\gamma)(Q+1)a_k^2},
 \label{rr:eq:power-tracking}\\
 \E\|z_{k+1}-r_k\|^2
 &\le\left(\frac{a_0}{a_{k+1}}\right)^pD^2
 +\frac{5\sigma^2}{4(2-3\gamma)(Q+1)a_{k+1}^2}.
 \label{rr:eq:power-pretransport}
\end{align}
\end{lemma}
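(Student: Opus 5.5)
We prove both inequalities together by induction on $k$. The two displayed bounds are linked: the bound on $\E\|z_k-r_k\|^2$ produces the bound on $\E\|z_{k+1}-r_k\|^2$ through the one-step tracking inequality \eqref{rr:eq:fbf-tracking}. That bound in turn produces the bound on $\E\|z_{k+1}-r_{k+1}\|^2$ through the recentering estimate \eqref{rr:eq:distance-transport}.

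\emph{Base case.} At $k=0$ we have $z_0=c_0$, and \eqref{rr:eq:initial-root-distance} gives $\|z_0-r_0\|\le D$. Since $(a_0/a_0)^p=1$, the first bound \eqref{rr:eq:power-tracking} at $k=0$ holds, the noise term being nonnegative.

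\emph{Inductive step.} Suppose \eqref{rr:eq:power-tracking} holds at index $k$.
\begin{itemize}
\item[(i)] We check that \eqref{rr:eq:fbf-tracking} applies. By \eqref{rr:eq:stepsize-bounds}, $\eta_k(L+a_k)\le 1/3$. The schedule caps $a_k\le L$, so $2a_k\le L+a_k$. The conditional unbiasedness and variance conditions of Lemma~\ref{rr:lem:fbf-clean} hold because $\xi_k$ and $\zeta_k$ are fresh independent samples, independent of the past that determines $z_k$, $a_k$, $c_k$ and $r_k$. The schedule is deterministic, so the $a_k$ are not random.
\item[(ii)] Using \eqref{rr:eq:fbf-tracking} and then the induction hypothesis,
\[
\E\|z_{k+1}-r_k\|^2 \le \Bigl(1-\tfrac43\eta_ka_k\Bigr)\Bigl[\Bigl(\tfrac{a_0}{a_k}\Bigr)^pD^2+\tfrac{5\sigma^2}{4(2-3\gamma)(Q+1)a_k^2}\Bigr]+\tfrac52\eta_k^2\sigma^2 .
\]
Here $1-\tfrac43\eta_ka_k\ge 0$ because $\eta_ka_k\le 1/6$.
\item[(iii)] Apply \eqref{rr:eq:power-det} to the $D^2$ term. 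Apply \eqref{rr:eq:power-noise}, multiplied by $\sigma^2$, to the remaining noise terms. Together these give \eqref{rr:eq:power-pretransport} at index $k$.
\item[(iv)] By \eqref{rr:eq:distance-transport}, $\|z_{k+1}-r_{k+1}\|\le\|z_{k+1}-r_k\|$ holds pathwise. Taking expectations turns \eqref{rr:eq:power-pretransport} at $k$ into \eqref{rr:eq:power-tracking} at $k+1$, which closes the induction.
\end{itemize}

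\emph{Main obstacle.} The delicate point is the conditioning argument in step (i). Lemma~\ref{rr:lem:fbf-clean} is stated for a deterministic starting point $x$. We apply it conditionally on $\mathcal F_{k-1}$, the $\sigma$-algebra generated by all samples before iteration $k$, with $\varepsilon=\delta_{\xi_k}(z_k)$ and $\zeta=\delta_{\zeta_k}(y_k)$. Conditional on $\varepsilon$ and the past, $\zeta$ has mean zero and variance at most $\sigma^2$, because $\zeta_k$ is independent and $y_k$ is measurable with respect to the past together with $\xi_k$. Taking total expectation then gives \eqref{rr:eq:fbf-tracking}.

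Everything else uses only the lemmas already established.
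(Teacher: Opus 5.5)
Your proof is correct and follows the paper's argument: induction with the base case from \eqref{rr:eq:initial-root-distance}, then \eqref{rr:eq:fbf-tracking} combined with \eqref{rr:eq:power-det} and \eqref{rr:eq:power-noise} to obtain \eqref{rr:eq:power-pretransport}, and finally \eqref{rr:eq:distance-transport} to pass from $r_k$ to $r_{k+1}$. Your extra checks, namely the nonnegativity of $1-\frac43\eta_ka_k$ and the conditional application of Lemma~\ref{rr:lem:fbf-clean}, only make explicit what the paper leaves implicit; like the paper, you tacitly need $a_0\le L$ to have $a_k\le L$ at $k=0$.
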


\begin{proof}
At $k=0$, \eqref{rr:eq:initial-root-distance} gives $\|z_0-r_0\|\le D$.
Assume \eqref{rr:eq:power-tracking} holds at iteration $k$.
Using \eqref{rr:eq:fbf-tracking}, \eqref{rr:eq:power-det}, and \eqref{rr:eq:power-noise},
\begin{align*}
 \E\|z_{k+1}-r_k\|^2
 &\le\left(1-\frac43\eta_ka_k\right)\E\|z_k-r_k\|^2
 +\frac52\eta_k^2\sigma^2\\
 &\le\left(\frac{a_0}{a_{k+1}}\right)^pD^2
 +\frac{5\sigma^2}{4(2-3\gamma)(Q+1)a_{k+1}^2}.
\end{align*}
Thus \eqref{rr:eq:power-pretransport} holds.
Then \eqref{rr:eq:distance-transport} gives \eqref{rr:eq:power-tracking} at iteration $k+1$.
\end{proof}

\subsection{Reaching \texorpdfstring{$a_N=L$}{a\_N=L} and the core finite-horizon bound}

\begin{lemma}[Condition for $a_N=L$]
\label{rr:lem:traversal}
Suppose \eqref{rr:eq:power-schedule} is used for $N$ iterations and
\begin{align}
 \frac3\gamma\left(
 \frac{L}{a_0}-1+Q\log\frac{L}{a_0}
 \right)\le N.
 \label{rr:eq:traversal-condition}
\end{align}
Then $a_N=L$.
\end{lemma}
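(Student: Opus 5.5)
We track the schedule through $b_k:=1/a_k$ and split the iterations into two phases, according to which term of $\eta_k=\frac1{3(L+Qa_k)}$ dominates. Until the cap $L$ is hit, the update $a_{k+1}=\frac{a_k}{1-\gamma\eta_ka_k}$ is equivalent to
\begin{align*}
 b_{k+1}=b_k-\gamma\eta_k=b_k-\frac{\gamma}{3(L+Q/b_k)}.
\end{align*}
So each uncapped iteration decreases $b_k$ by $\frac{\gamma}{3(L+Q/b_k)}$. Once the minimum selects $L$, we have $a_{k+1}=L$, and $a_{k+2}=L$ as well because $\frac{a}{1-\gamma\eta a}\ge a$. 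Hence it suffices to show that, while $a_k<L$, $N$ iterations push $a$ up to $L$. We argue by contradiction: assume $a_k<L$ for all $k\le N$, so that no capping occurs.

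To count iterations, we rewrite the decrement as
\begin{align*}
 \frac{3}{\gamma}(b_k-b_{k+1})=\frac{1}{L+Qa_k}.
\end{align*}
We then lower-bound a potential that increases by at least $1$ per step. Set
\begin{align*}
 \Phi(a):=\frac3\gamma\Big(\frac La+Q\log a\Big)\quad\text{(up to sign conventions)},
\end{align*}
and seek a quantity that grows by at least $1$ each iteration. Concretely, we need
\begin{align*}
 \frac{L}{a_k}-\frac{L}{a_{k+1}}+Q\log\frac{a_{k+1}}{a_k}\ge\frac{\gamma}{3}.
\end{align*}
Since $\frac1{a_k}-\frac1{a_{k+1}}=\gamma\eta_k$, the first part equals $L\gamma\eta_k$. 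For the log term, $\log\frac{a_{k+1}}{a_k}=-\log(1-\gamma\eta_ka_k)\ge\gamma\eta_ka_k$. Adding the two gives
\begin{align*}
 \gamma\eta_k(L+Qa_k)=\frac\gamma3,
\end{align*}
which is exactly what is needed. Each uncapped step therefore increases
\begin{align*}
 \Psi(a):=-\frac La+Q\log a
\end{align*}
by at least $\gamma/3$.

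Summing over the $N$ uncapped steps gives
\begin{align*}
 \Psi(a_N)-\Psi(a_0)\ge\frac{\gamma N}{3}.
\end{align*}
On the other hand, $a_N<L$ together with the monotonicity of $\Psi$ gives
\begin{align*}
 \Psi(a_N)-\Psi(a_0)<\Psi(L)-\Psi(a_0)=\frac{L}{a_0}-1+Q\log\frac L{a_0}.
\end{align*}
Combining the two yields
\begin{align*}
 N<\frac3\gamma\Big(\frac L{a_0}-1+Q\log\frac L{a_0}\Big),
\end{align*}
which contradicts the hypothesis \eqref{rr:eq:traversal-condition}. Hence some step is capped, and $a_N=L$ by the persistence observation above.

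The main care point is the boundary step: the contradiction only uses the uncapped steps, and once $L$ is reached it is kept. We should also confirm that $1-\gamma\eta_ka_k>0$, which holds because $\gamma\eta_ka_k\le\frac{\gamma}{3(Q+1)}<1$ by \eqref{rr:eq:stepsize-bounds}. The inequality $-\log(1-x)\ge x$ is the only nontrivial estimate in the argument.
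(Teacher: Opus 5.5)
Your proof is correct and is essentially the paper's argument: you use the same potential $-L/a+Q\log a$, the same bound $-\log(1-x)\ge x$ to get an increase of at least $\gamma/3$ per unclipped step, and the same contradiction from strict monotonicity of the potential. Your explicit persistence observation (once $a_k=L$ it stays at $L$) spells out what the paper compresses into ``assume $a_N<L$; then no clipping occurs,'' and in a write-up you should drop the exploratory $\Phi$ ``up to sign conventions'' and work with $\Psi$ directly.
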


\begin{proof}
Assume $a_N<L$.
Then no clipping occurs and
\begin{align*}
 \frac1{a_{k+1}}=\frac1{a_k}-\gamma\eta_k,
 \qquad
 \frac{a_{k+1}}{a_k}=\frac1{1-\gamma\eta_ka_k}.
\end{align*}
Using $-\log(1-x)\ge x$,
\begin{align*}
 &\frac3\gamma\left[
 L\left(\frac1{a_k}-\frac1{a_{k+1}}\right)
 +Q\log\frac{a_{k+1}}{a_k}
 \right]\\
 &\quad=3L\eta_k
 +\frac{3Q}{\gamma}\bigl[-\log(1-\gamma\eta_ka_k)\bigr]\\
 &\quad\ge3\eta_k(L+Qa_k)=1.
\end{align*}
Summing over $k=0,\ldots,N-1$ gives
\begin{align*}
 \frac3\gamma\left[
 L\left(\frac1{a_0}-\frac1{a_N}\right)
 +Q\log\frac{a_N}{a_0}
 \right]\ge N.
\end{align*}
The expression in brackets is strictly increasing in $a_N$.
Since $a_N<L$, this contradicts \eqref{rr:eq:traversal-condition}.
\end{proof}

\begin{proposition}[Core finite-horizon bound]
\label{rr:prop:power-certificate}
Let $p>2$, $0<\gamma<2/3$, $p\gamma\le4/3$, $Q\ge1$, and $0<a_0<L$.
If \eqref{rr:eq:traversal-condition} holds, define
\begin{align*}
 p_N:=J_{\frac1L A}\left(z_N-\frac1L F(z_N)\right).
\end{align*}
Then
\begin{align}
 L\left(\E\|z_N-p_N\|^2\right)^{1/2}
 \le&LD\left[
 \frac{p}{p-2}\frac{a_0}{L}
 +\left(\sqrt2-\frac{2}{p-2}\right)
 \left(\frac{a_0}{L}\right)^{p/2}
 \right]\notag\\
 &+\sigma\sqrt{\frac{5}{4(2-3\gamma)(Q+1)}}
 \left(\log\frac{L}{a_0}+\sqrt2\right).
 \label{rr:eq:power-core}
\end{align}
\end{proposition}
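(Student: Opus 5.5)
The plan is to combine the forward-backward displacement bound of Lemma~\ref{rr:lem:fb-displacement} at $m=N$ with the tracking bounds of Lemma~\ref{rr:lem:power-tracking}, and then to evaluate the resulting sums along the schedule by integral comparison. By Lemma~\ref{rr:lem:traversal}, condition \eqref{rr:eq:traversal-condition} gives $a_N=L$. Lemma~\ref{rr:lem:fb-displacement} then yields
\begin{align*}
 L\left(\E\|z_N-p_N\|^2\right)^{1/2}
 \le\sqrt2L\left(\E\|z_N-r_N\|^2\right)^{1/2}
 +a_0D
 +\sum_{k=0}^{N-1}\lambda_k
 \left(\E\|z_{k+1}-r_k\|^2\right)^{1/2}.
\end{align*}
Into this inequality I will substitute \eqref{rr:eq:power-tracking} at $k=N$ and \eqref{rr:eq:power-pretransport} for each $k$. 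To separate the terms, I will write $C:=\frac{5}{4(2-3\gamma)(Q+1)}$ and use $\sqrt{u+v}\le\sqrt u+\sqrt v$.

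With $a_N=L$, the first term is bounded by $\sqrt2 L\left[(a_0/L)^{p/2}D+\sqrt C\sigma/L\right]$. The $k$-th summand is bounded by $\lambda_k\left[a_0^{p/2}a_{k+1}^{-p/2}D+\sqrt C\sigma/a_{k+1}\right]$. Hence the deterministic part of the sum is $a_0^{p/2}D\sum_k\lambda_k a_{k+1}^{-p/2}$, and the stochastic part is $\sqrt C\sigma\sum_k\lambda_k/a_{k+1}$.

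The main step is the Riemann-type comparison. Since $\lambda_k=a_{k+1}-a_k\ge0$ and $a\mapsto a^{-p/2}$ and $a\mapsto a^{-1}$ are decreasing, the right-endpoint sums are bounded by the integrals:
\begin{align*}
 \sum_{k=0}^{N-1}\frac{\lambda_k}{a_{k+1}^{p/2}}\le\int_{a_0}^{L}a^{-p/2}\,da,
 \qquad
 \sum_{k=0}^{N-1}\frac{\lambda_k}{a_{k+1}}\le\int_{a_0}^{L}\frac{da}{a}=\log\frac{L}{a_0}.
\end{align*}
The endpoint $L$ is valid because $a_N=L$ and the $a_k$ are nondecreasing. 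Multiplying the first integral by $a_0^{p/2}$ gives, as computed in the text preceding \eqref{rr:eq:power-schedule},
\begin{align*}
 a_0^{p/2}\int_{a_0}^{L}a^{-p/2}\,da=\frac{2L}{p-2}\left[\frac{a_0}{L}-\left(\frac{a_0}{L}\right)^{p/2}\right].
\end{align*}

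It then remains to collect coefficients. The terms in $a_0/L$ are $1+\frac{2}{p-2}=\frac{p}{p-2}$. The terms in $(a_0/L)^{p/2}$ are $\sqrt2-\frac{2}{p-2}$. The $\sigma$ terms give $\sqrt C\sigma(\log(L/a_0)+\sqrt2)$. Together these match \eqref{rr:eq:power-core}.

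Every step is routine, so I expect no real obstacle. The only care needed is to check the hypotheses as used: $a_k\le L$ justifies applying Lemma~\ref{rr:lem:fbf-clean} through \eqref{rr:eq:fbf-tracking}, and the integral comparison requires monotone $a_k$ together with $a_N=L$.
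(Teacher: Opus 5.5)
Your proposal is correct and follows essentially the same route as the paper: obtain $a_N=L$ from Lemma~\ref{rr:lem:traversal}, combine Lemma~\ref{rr:lem:fb-displacement} (through Proposition~\ref{rr:prop:schedule-free}) with the tracking bounds of Lemma~\ref{rr:lem:power-tracking}, and bound the two weighted sums by $\int_{a_0}^{L}a^{-p/2}\,da$ and $\int_{a_0}^{L}a^{-1}\,da$. The only cosmetic difference is that you use the second inequality of \eqref{rr:eq:fb-residual} directly, whereas the paper first bounds $(\E\|F(r_N)+h_N\|^2)^{1/2}$ separately and then substitutes it into the first inequality.
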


\begin{proof}
Lemma~\ref{rr:lem:traversal} gives $a_N=L$.
Lemma~\ref{rr:lem:power-tracking} therefore gives
\begin{align*}
 L\left(\E\|z_N-r_N\|^2\right)^{1/2}
 \le LD\left(\frac{a_0}{L}\right)^{p/2}
 +\sigma\sqrt{\frac{5}{4(2-3\gamma)(Q+1)}}.
\end{align*}
Proposition~\ref{rr:prop:schedule-free} and \eqref{rr:eq:power-pretransport} give
\begin{align*}
 \left(\E\|F(r_N)+h_N\|^2\right)^{1/2}
 \le&a_0D
 +a_0^{p/2}D\sum_{k=0}^{N-1}\frac{\lambda_k}{a_{k+1}^{p/2}}\\
 &+\sigma\sqrt{\frac{5}{4(2-3\gamma)(Q+1)}}
 \sum_{k=0}^{N-1}\frac{\lambda_k}{a_{k+1}}.
\end{align*}
Since both integrands are decreasing,
\begin{align*}
 a_0^{p/2}\sum_{k=0}^{N-1}\frac{\lambda_k}{a_{k+1}^{p/2}}
 &\le\frac{2L}{p-2}\left[
 \frac{a_0}{L}-\left(\frac{a_0}{L}\right)^{p/2}
 \right],\\
 \sum_{k=0}^{N-1}\frac{\lambda_k}{a_{k+1}}
 &\le\log\frac{L}{a_0}.
\end{align*}
Thus
\begin{align*}
 \left(\E\|F(r_N)+h_N\|^2\right)^{1/2}
 \le&LD\left[
 \frac{p}{p-2}\frac{a_0}{L}
 -\frac{2}{p-2}\left(\frac{a_0}{L}\right)^{p/2}
 \right]\\
 &+\sigma\sqrt{\frac{5}{4(2-3\gamma)(Q+1)}}
 \log\frac{L}{a_0}.
\end{align*}
Substituting the last two bounds into Lemma~\ref{rr:lem:fb-displacement} proves \eqref{rr:eq:power-core}.
\end{proof}

\subsection{Choosing the quartic tracking envelope}

The admissible parameters satisfy $p>2$ and $p\gamma\le4/3$.
Solving \eqref{rr:eq:traversal-condition} for the largest admissible $Q$ gives
\begin{align*}
 Q
 =\frac{\frac{\gamma N}{3}-\left(\frac{L}{a_0}-1\right)}
 {\log\left(\frac{L}{a_0}\right)}.
\end{align*}
To compare the leading coefficients, write
\begin{align*}
 a_0=\frac{3\rho L}{\gamma N},
 \qquad \rho>1,
\end{align*}
and hold $\rho$ fixed.
Then
\begin{align*}
 Q
 =\frac{\gamma(\rho-1)}{3\rho}\frac{N}{\log N}\bigl(1+o(1)\bigr).
\end{align*}
The leading deterministic coefficient in Proposition~\ref{rr:prop:power-certificate} is
\begin{align*}
 \frac{3\rho}{N}\frac{p}{\gamma(p-2)}+o(N^{-1}).
\end{align*}
For fixed $\gamma$, $p/(p-2)$ decreases with $p$, so $p\gamma\le4/3$ selects
\begin{align*}
 p=\frac{4}{3\gamma}.
\end{align*}
With this choice, the leading deterministic coefficient and the square of the leading stochastic coefficient both depend on $\gamma$ through
\begin{align*}
 \frac1{\gamma(2-3\gamma)}.
\end{align*}
Indeed,
\begin{align*}
 \frac{p}{\gamma(p-2)}
 &=\frac{2}{\gamma(2-3\gamma)},
\end{align*}
while
\begin{align*}
 \frac{1}{(2-3\gamma)Q}
 =\frac{3\rho}{\gamma(2-3\gamma)(\rho-1)}
 \frac{\log N}{N}\bigl(1+o(1)\bigr).
\end{align*}
Since $\gamma(2-3\gamma)$ is maximized at $\gamma=1/3$, the corresponding value is $p=4$.
We therefore use the quartic deterministic envelope and allow every $0<\gamma\le1/3$ in the finite-horizon theorem; Corollary~\ref{rr:cor:explicit} uses $\gamma=1/3$.

For $p=4$, Proposition~\ref{rr:prop:power-certificate} becomes
\begin{align}
 L\left(\E\|z_N-p_N\|^2\right)^{1/2}
 \le&\left(2a_0+(\sqrt2-1)\frac{a_0^2}{L}\right)D\notag\\
 &+\sigma\sqrt{\frac{5}{4(2-3\gamma)(Q+1)}}
 \left(\log\frac{L}{a_0}+\sqrt2\right).
 \label{rr:eq:fb-final}
\end{align}

\subsection{Equation case}

The equation case uses the same one-step, recentering, tracking, and traversal bounds.
Only the final conversion is simpler.

\begin{corollary}[Equation-case last-iterate bound]
\label{rr:cor:equation}
Suppose $A=0$, $p=4$, $0<\gamma\le1/3$, and the conditions of Lemma~\ref{rr:lem:traversal} hold.
Then
\begin{align}
 \left(\E\|F(z_N)\|^2\right)^{1/2}
 \le2a_0D
 +\sigma\sqrt{\frac{5}{4(2-3\gamma)(Q+1)}}
 \left(\log\frac{L}{a_0}+1\right).
 \label{rr:eq:equation-general}
\end{align}
\end{corollary}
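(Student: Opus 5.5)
When $A=0$ the forward-backward point is $p_N=z_N-\frac1L F(z_N)$, so $L(z_N-p_N)=F(z_N)$. The plan is therefore to redo the final conversion in Proposition~\ref{rr:prop:power-certificate} directly, without the factor $\sqrt2$ that Lemma~\ref{rr:lem:fb-displacement} introduces. That factor is the only source of the constant $\sqrt2$ in \eqref{rr:eq:fb-final}. Replacing it by $1$ gives exactly \eqref{rr:eq:equation-general}, since $2a_0+(1-1)a_0^2/L=2a_0$ and $\log\frac{L}{a_0}+1$ replaces $\log\frac{L}{a_0}+\sqrt2$.

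First, with $A=0$ we have $h_m=0$, and the triangle inequality with $L$-Lipschitz continuity gives
\begin{align*}
 \|F(z_N)\|\le\|F(z_N)-F(r_N)\|+\|F(r_N)\|\le L\|z_N-r_N\|+\|F(r_N)\|.
\end{align*}
Taking $L_2$ norms and applying Minkowski's inequality yields
\begin{align*}
 \left(\E\|F(z_N)\|^2\right)^{1/2}\le L\left(\E\|z_N-r_N\|^2\right)^{1/2}+\left(\E\|F(r_N)\|^2\right)^{1/2}.
\end{align*}

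Second, Lemma~\ref{rr:lem:traversal} gives $a_N=L$. Lemma~\ref{rr:lem:power-tracking} with $p=4$ and $\sqrt{u+v}\le\sqrt u+\sqrt v$ then gives
\begin{align*}
 L\left(\E\|z_N-r_N\|^2\right)^{1/2}\le \frac{a_0^2}{L}D+\sigma\sqrt{\frac{5}{4(2-3\gamma)(Q+1)}}.
\end{align*}
The hypotheses $p\gamma\le4/3$ and $\gamma<2/3$ hold because $\gamma\le1/3$.

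Third, for $\E\|F(r_N)\|^2$ we reuse the bound from the proof of Proposition~\ref{rr:prop:power-certificate} at $p=4$. It combines Proposition~\ref{rr:prop:schedule-free} with \eqref{rr:eq:power-pretransport} and the two integral comparisons:
\begin{align*}
 \left(\E\|F(r_N)\|^2\right)^{1/2}\le LD\left[2\frac{a_0}{L}-\left(\frac{a_0}{L}\right)^2\right]+\sigma\sqrt{\frac{5}{4(2-3\gamma)(Q+1)}}\log\frac{L}{a_0}.
\end{align*}
Adding the two displays, the $\pm a_0^2D/L$ terms cancel, which leaves $2a_0D$ plus the stochastic term with factor $\log\frac{L}{a_0}+1$.

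The only step needing care is checking that the earlier lemmas apply verbatim when $A=0$. In that case the resolvent is the identity, RRSEG reduces to stochastic extragradient, and Lemma~\ref{rr:lem:fbf-clean} still holds with $h_y=0$. The root-selection identity \eqref{rr:eq:root-selection} also holds with $h_k=0$. No new estimate is needed, so I expect no genuine obstacle beyond this bookkeeping.
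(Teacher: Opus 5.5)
Your proposal is correct and takes essentially the same route as the paper: write $F(z_N)=(F(z_N)-F(r_N))+F(r_N)$, bound the first piece by Lipschitz continuity and the tracking bound with $a_N=L$, bound the second by Proposition~\ref{rr:prop:schedule-free} together with \eqref{rr:eq:power-pretransport} and the two integral comparisons at $p=4$, then add so that the $\pm a_0^2D/L$ terms cancel. Your hypothesis check ($p\gamma\le 4/3$, $\gamma<2/3$) and the observation that $A=0$ forces $h_k=0$ are exactly the bookkeeping the paper relies on.
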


\begin{proof}
When $A=0$, $h_N=0$.
Proposition~\ref{rr:prop:schedule-free}, \eqref{rr:eq:power-pretransport}, and the two integral bounds used in the proof of Proposition~\ref{rr:prop:power-certificate} give
\begin{align*}
 \left(\E\|F(r_N)\|^2\right)^{1/2}
 \le\left(2a_0-\frac{a_0^2}{L}\right)D
 +\sigma\sqrt{\frac{5}{4(2-3\gamma)(Q+1)}}
 \log\frac{L}{a_0}.
\end{align*}
Lipschitz continuity and \eqref{rr:eq:power-tracking} give
\begin{align*}
 \left(\E\|F(z_N)-F(r_N)\|^2\right)^{1/2}
 \le\frac{a_0^2}{L}D
 +\sigma\sqrt{\frac{5}{4(2-3\gamma)(Q+1)}}.
\end{align*}
Adding these two inequalities proves \eqref{rr:eq:equation-general}.
\end{proof}

\subsection{Composite output}

Let
\begin{align*}
 m:=\lfloor\min\{Q,N\}\rfloor,
 \qquad
 \overline F_N:=\frac1m\sum_{k=N-m}^{N-1}F_{\xi_k}(z_k),
\end{align*}
and define
\begin{align*}
 \widehat z_N
 :=J_{\frac1L A}\left(z_N-\frac1L\overline F_N\right).
\end{align*}
The average uses oracle evaluations already computed by RRSEG.

\begin{lemma}[Accuracy of the averaged oracle evaluations]
\label{rr:lem:output-average}
Assume $p=4$, $0<\gamma\le1/3$, and $a_N=L$.
Then
\begin{align}
 \left(\E\|\overline F_N-F(z_N)\|^2\right)^{1/2}
 \le&\frac{77}{32}\frac{a_0^2}{L}D
 +\frac{\sigma}{\sqrt m}\notag\\
 &+\frac94\sigma
 \sqrt{\frac{5}{4(2-3\gamma)(Q+1)}}.
 \label{rr:eq:output-average}
\end{align}
\end{lemma}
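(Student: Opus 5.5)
We decompose $\overline F_N-F(z_N)$ into a noise part and a bias part:
\begin{align*}
 \overline F_N-F(z_N)
 =\frac1m\sum_{k=N-m}^{N-1}\delta_{\xi_k}(z_k)
 +\frac1m\sum_{k=N-m}^{N-1}\bigl(F(z_k)-F(z_N)\bigr).
\end{align*}
By Minkowski's inequality in $L_2$, it suffices to bound each part separately.

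\textbf{Noise part.} The terms $\delta_{\xi_k}(z_k)$ form a martingale difference sequence: $z_k$ is measurable with respect to the samples drawn before $\xi_k$, and $\xi_k$ is fresh. Cross terms therefore vanish in expectation, and Assumption~\ref{ass:stochastic-oracle} gives an $L_2$ norm of at most $\sigma/\sqrt m$. This produces the middle term of \eqref{rr:eq:output-average}.

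\textbf{Bias part.} We use $L$-Lipschitz continuity to reduce to $L\cdot\frac1m\sum_k\|z_k-z_N\|_{L_2}$. Each displacement is routed through the regularized zeros:
\begin{align*}
 \|z_k-z_N\|\le\|z_k-r_k\|+\|r_k-r_N\|+\|r_N-z_N\|.
\end{align*}
The tracking bound \eqref{rr:eq:power-tracking} with $p=4$ controls $\|z_k-r_k\|$ and $\|z_N-r_N\|$. The root-motion bound \eqref{rr:eq:root-motion} controls the middle term by the telescoping sum $\sum_{j=k}^{N-1}\frac{\lambda_j}{a_{j+1}}\|z_{j+1}-r_j\|$, and \eqref{rr:eq:power-pretransport} bounds each $\|z_{j+1}-r_j\|$.

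\textbf{Main obstacle.} The key difficulty is that $a_k$ may be well below $L$ for the indices in the window, so the per-iterate bounds scale like $L\cdot(a_0^2/a_k^2)D$ and $L\sigma/(a_k\sqrt{Q})$. These do not directly give constants independent of $L/a_k$. The plan is to exploit the schedule near the end of the horizon. When unclipped, $1/a_{k+1}=1/a_k-\gamma\eta_k$ with $\eta_k\approx 1/(3(L+QL))$. Over at most $m\le Q$ steps, $1/a_k$ can therefore exceed $1/L$ only by about $m\gamma/(3(Q+1)L)\le \gamma/(3L)$. Hence for $k\ge N-m$ we get $a_k\ge L/(1+\gamma/3)$, or more precisely $a_k\ge cL$ for an explicit constant $c$ (about $9/10$ when $\gamma=1/3$).

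With this, every quantity in the window is comparable to its value at $a=L$:
\begin{itemize}
 \item $L\|z_k-r_k\|\lesssim (a_0^2/L)D+\sigma\sqrt{5/(4(2-3\gamma)(Q+1))}$, up to constant factors $(L/a_k)^2\le c^{-2}$ and $L/a_k\le c^{-1}$;
 \item the root-motion sum contributes $\sum_j \lambda_j/a_{j+1}\le\log(L/a_{N-m})\le\log(1/c)$, a small constant.
\end{itemize}

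\textbf{Assembling the constants.} Adding the three contributions (the average of $\|z_k-r_k\|$, the root motion, and $\|z_N-r_N\|$) and tracking the constants $c^{-2}$, $c^{-1}$, $\log(1/c)$ should yield coefficients no larger than $77/32$ on $(a_0^2/L)D$ and $9/4$ on $\sigma\sqrt{5/(4(2-3\gamma)(Q+1))}$. We use $\gamma\le1/3$ and possibly the sharper integral bounds $\int da/a^3$ for the deterministic part. Obtaining exactly these constants requires careful bookkeeping, and it is the step most likely to need adjustment, for example bounding $\sum_j\lambda_j a_0^2/a_{j+1}^3$ by $\int_{a_k}^L a_0^2\,da/a^3$ rather than crudely.
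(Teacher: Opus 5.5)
Your decomposition is the paper's: the martingale noise term gives $\sigma/\sqrt m$, and the bias term is routed through $r_k$ and $r_N$ using the tracking bound, the root-motion bound, and a lower bound on $a_k$ over the window. The gap is in the quantitative step you flagged. Since the lemma's content is exactly the constants $\frac{77}{32}$ and $\frac94$, this gap matters.

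\emph{The window bound.} You obtain the lower bound on $a_k$ by replacing $\eta_k$ with $1/(3(L+QL))$, which already assumes $a_k\approx L$. The resulting claim $a_k\ge L/(1+\gamma/3)$, about $\frac{9}{10}L$ at $\gamma=\frac13$, is not true in general. Near the end of the horizon an unclipped step has $a_k/a_{k+1}=1-\gamma\eta_ka_k\approx1-\gamma/(3(Q+1))$. Over $m\approx Q$ such steps, $a_{N-m}/L$ can therefore approach $e^{-\gamma/3}<1/(1+\gamma/3)$. The rigorous bound uses $a_k\le L$ to get $\eta_ka_k\le\frac1{3(Q+1)}$. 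Hence $1-\frac{a_k}{a_{k+1}}\le\gamma\eta_ka_k\le\frac1{9(Q+1)}$, and $\frac{a_{N-m}}{L}=\prod_{k=N-m}^{N-1}\frac{a_k}{a_{k+1}}\ge1-\frac{m}{9(Q+1)}>\frac89$.

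\emph{The root-motion sum.} With the correct $c=\frac89$, your estimate $\sum_j\lambda_j/a_{j+1}\le\log(1/c)=\log\frac98$, applied to the uniform window bound, is too weak. It gives $(1+\log\frac98)\frac{81}{64}+1\approx2.415>\frac{77}{32}$ and $(1+\log\frac98)\frac98+1\approx2.258>\frac94$, so the stated inequality does not follow.

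\emph{The fix.} Use the identity $\frac{\lambda_j}{a_{j+1}}=1-\frac{a_j}{a_{j+1}}$. The same per-step bound then gives $\sum_{j=k}^{N-1}\frac{\lambda_j}{a_{j+1}}\le\frac{m}{9(Q+1)}<\frac19$, which is sharper than $\log(L/a_{N-m})$. Now bound $\|z_k-r_k\|$ and $\|z_{j+1}-r_j\|$ (in $L_2$) by the window bound $\frac{81}{64}\frac{a_0^2}{L^2}D+\frac98\frac{\sigma}{L}\sqrt{\frac{5}{4(2-3\gamma)(Q+1)}}$, and $\|z_N-r_N\|$ by the terminal bound. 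This yields exactly $(1+\frac19)\frac{81}{64}+1=\frac{77}{32}$ and $(1+\frac19)\frac98+1=\frac94$, which is the paper's computation.

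Your integral idea also works, but only if you apply it to both parts, not just the deterministic one. With $a_k\ge\frac89L$, the bounds $\sum_j\lambda_j/a_{j+1}^3\le\frac12(a_k^{-2}-L^{-2})$ and $\sum_j\lambda_j/a_{j+1}^2\le a_k^{-1}-L^{-1}$ give coefficients $\frac32\cdot\frac{81}{64}+\frac12=\frac{307}{128}\le\frac{77}{32}$ and $2\cdot\frac98=\frac94$.
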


\begin{proof}
From the update of $a_k$ and \eqref{rr:eq:stepsize-bounds},
\begin{align*}
 1-\frac{a_k}{a_{k+1}}
 \le\gamma\eta_ka_k
 \le\frac1{9(Q+1)}.
\end{align*}
Since $m\le Q$ and $a_N=L$,
\begin{align*}
 \frac{a_{N-m}}{L}
 &=\prod_{k=N-m}^{N-1}\frac{a_k}{a_{k+1}}\\
 &\ge1-\sum_{k=N-m}^{N-1}
 \left(1-\frac{a_k}{a_{k+1}}\right)\\
 &\ge1-\frac{m}{9(Q+1)}
 >\frac89.
\end{align*}
Thus, for $N-m\le k\le N$,
\begin{align}
 \left(\E\|z_k-r_k\|^2\right)^{1/2}
 \le\frac{81}{64}\frac{a_0^2}{L^2}D
 +\frac98\frac{\sigma}{L}
 \sqrt{\frac{5}{4(2-3\gamma)(Q+1)}}.
 \label{rr:eq:suffix-tracking}
\end{align}
The same bound holds for
$(\E\|z_{k+1}-r_k\|^2)^{1/2}$ when $N-m\le k\le N-1$.
At the terminal iterate, the sharper bound
\begin{align}
 \left(\E\|z_N-r_N\|^2\right)^{1/2}
 \le\frac{a_0^2}{L^2}D
 +\frac{\sigma}{L}
 \sqrt{\frac{5}{4(2-3\gamma)(Q+1)}}
 \label{rr:eq:terminal-tracking}
\end{align}
follows directly from \eqref{rr:eq:power-tracking} and $a_N=L$.

By \eqref{rr:eq:root-motion}, for every $N-m\le k\le N$,
\begin{align*}
 \left(\E\|r_k-r_N\|^2\right)^{1/2}
 &\le\sum_{j=k}^{N-1}\frac{\lambda_j}{a_{j+1}}
 \left(\E\|z_{j+1}-r_j\|^2\right)^{1/2}\\
 &<\frac19\left[
 \frac{81}{64}\frac{a_0^2}{L^2}D
 +\frac98\frac{\sigma}{L}
 \sqrt{\frac{5}{4(2-3\gamma)(Q+1)}}
 \right].
\end{align*}
Combining this inequality with \eqref{rr:eq:suffix-tracking} and \eqref{rr:eq:terminal-tracking},
\begin{align}
 \left(\E\|z_k-z_N\|^2\right)^{1/2}
 \le\frac{77}{32}\frac{a_0^2}{L^2}D
 +\frac94\frac{\sigma}{L}
 \sqrt{\frac{5}{4(2-3\gamma)(Q+1)}}.
 \label{rr:eq:suffix-clustering}
\end{align}
Indeed, the deterministic coefficient is
$(1+1/9)(81/64)+1=77/32$, and the stochastic coefficient is
$(1+1/9)(9/8)+1=9/4$.

Now decompose
\begin{align*}
 \overline F_N-F(z_N)
 =&\frac1m\sum_{k=N-m}^{N-1}\bigl(F_{\xi_k}(z_k)-F(z_k)\bigr)\\
 &+\frac1m\sum_{k=N-m}^{N-1}\bigl(F(z_k)-F(z_N)\bigr).
\end{align*}
Because $z_k$ is determined before $\xi_k$ is drawn, the errors in the first sum form a martingale difference sequence, so
\begin{align*}
 \E\left\|\frac1m\sum_{k=N-m}^{N-1}\bigl(F_{\xi_k}(z_k)-F(z_k)\bigr)\right\|^2
 \le\frac{\sigma^2}{m}.
\end{align*}
For the second sum, Lipschitz continuity and \eqref{rr:eq:suffix-clustering} give
\begin{align*}
 &\left(\E\left\|\frac1m\sum_{k=N-m}^{N-1}\bigl(F(z_k)-F(z_N)\bigr)\right\|^2\right)^{1/2}\\
 &\quad\le\frac{77}{32}\frac{a_0^2}{L}D
 +\frac94\sigma
 \sqrt{\frac{5}{4(2-3\gamma)(Q+1)}}.
\end{align*}
Minkowski's inequality proves \eqref{rr:eq:output-average}.
\end{proof}

\begin{lemma}[Residual of the final resolvent point]
\label{rr:lem:output-residual}
Let
\begin{align*}
 p_N=J_{\frac1L A}\left(z_N-\frac1L F(z_N)\right).
\end{align*}
Then
\begin{align}
 \mathcal R(\widehat z_N)
 \le\sqrt2L\|z_N-p_N\|
 +\sqrt2\|\overline F_N-F(z_N)\|.
 \label{rr:eq:output-residual}
\end{align}
\end{lemma}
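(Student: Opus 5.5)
We must prove $\mathcal R(\widehat z_N)\le\sqrt2L\|z_N-p_N\|+\sqrt2\|\overline F_N-F(z_N)\|$. The plan is to read off an explicit element of $(F+A)(\widehat z_N)$ from the resolvent step, which bounds $\mathcal R(\widehat z_N)$ by the norm of one vector. We then control that vector using nonexpansiveness of $J_{\frac1LA}$, comparing $\widehat z_N$ with $p_N$.

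Write $e:=\overline F_N-F(z_N)$. The resolvent step $\widehat z_N=J_{\frac1LA}(z_N-\frac1L\overline F_N)$ gives the element
\begin{align*}
 \widehat h:=L(z_N-\widehat z_N)-\overline F_N\in A(\widehat z_N),
\end{align*}
so that
\begin{align*}
 \mathcal R(\widehat z_N)\le\|F(\widehat z_N)+\widehat h\|
 =\|L(z_N-\widehat z_N)-(F(z_N)-F(\widehat z_N))-e\|.
\end{align*}
The key step is to bound the deterministic part $u:=L(z_N-\widehat z_N)-(F(z_N)-F(\widehat z_N))$ through an orthogonality-type estimate. First, $F$ is monotone and $L$-Lipschitz, so
\begin{align*}
 \|L x-(F(z_N)-F(\widehat z_N))\|^2\le L^2\|x\|^2+L^2\|x\|^2-2L\langle x,F(z_N)-F(\widehat z_N)\rangle\le 2L^2\|x\|^2
\end{align*}
with $x=z_N-\widehat z_N$. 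This gives $\|u\|\le\sqrt2L\|z_N-\widehat z_N\|$.

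Second, nonexpansiveness of the resolvent applied to the two arguments $z_N-\frac1LF(z_N)$ and $z_N-\frac1L\overline F_N$ gives $\|\widehat z_N-p_N\|\le\frac1L\|e\|$. Hence
\begin{align*}
 L\|z_N-\widehat z_N\|\le L\|z_N-p_N\|+\|e\|.
\end{align*}

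Combining these would give $\mathcal R\le\sqrt2L\|z_N-p_N\|+(\sqrt2+1)\|e\|$, which is too weak by the extra $\|e\|$. The main obstacle is therefore to absorb the $-e$ term into the $\sqrt2$ factor. To do this, I would instead treat the full vector $Lx-(F(z_N)-F(\widehat z_N))-e$ jointly, using firm nonexpansiveness rather than plain nonexpansiveness.

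Set $s:=\widehat z_N-p_N$. Firm nonexpansiveness gives
\begin{align*}
 L\|s\|^2\le-\langle s,e\rangle,
\end{align*}
and this inequality pairs $s$ against $e$. I would expand $\|u-e\|^2$ with $x=(z_N-p_N)-s$, apply the monotone-Lipschitz inequality above to $u$, and then use $L\|s\|^2+\langle s,e\rangle\le0$ to cancel the cross terms involving $s$. A Cauchy--Schwarz/Young step should then bound the result by $(\sqrt2L\|z_N-p_N\|+\sqrt2\|e\|)^2$, in the same way that the parallelogram identity was used in Lemma~\ref{rr:lem:fb-displacement}. If the cancellation is not exact, I would fall back on writing $\|u-e\|^2\le2\|u'\|^2+2\|\cdot\|^2$ for a suitable splitting, where $u'$ is built from $z_N-p_N$ and the remainder is $\|e\|$. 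This is what produces the two $\sqrt2$ coefficients.
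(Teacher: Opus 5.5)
Your preliminary steps are correct: the element $\widehat h\in A(\widehat z_N)$, the weak bound with $(\sqrt2+1)\|e\|$, and $L\|s\|^2\le-\langle s,e\rangle$ for $s=\widehat z_N-p_N$. The gap is the step that removes the extra $\|e\|$. You only assert it (``should then bound''), and the route you describe for it cannot succeed.

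\emph{Why the main route fails.} If you apply the monotone--Lipschitz estimate to $u$ as a whole, i.e.\ compare $z_N$ with $\widehat z_N$, you retain only $\|u\|\le\sqrt2L\|z_N-\widehat z_N\|$. The cross term $-2\langle u,e\rangle$ in $\|u-e\|^2$ then contains $\langle F(z_N)-F(\widehat z_N),e\rangle$, and nothing you keep controls it beyond Cauchy--Schwarz. These retained facts do not imply the claim. For $e\neq0$, set $\hat e=e/\|e\|$ and take $s=-e/L$, $z_N-p_N=\omega\hat e$ with $\omega\ge0$, and $u=-\sqrt2(L\omega+\|e\|)\hat e$. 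Then $L\|s\|^2=-\langle s,e\rangle$ and $\|u\|=\sqrt2L\|z_N-\widehat z_N\|$, yet $\|u-e\|=\sqrt2L\omega+(\sqrt2+1)\|e\|$.

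\emph{Why the fallback fails.} No splitting with remainder bounded by $\|e\|$ can exist, because the coefficient $\sqrt2$ on $\|e\|$ is attained even when $z_N=p_N$. Take $L=1$, $F=A$ the rotation $(a,b)\mapsto(b,-a)$, $z_N=0$, $\overline F_N=(-1,0)$. Then $p_N=0$, $\widehat z_N=(\tfrac12,\tfrac12)$, and $\mathcal R(\widehat z_N)=\sqrt2=\sqrt2\|e\|$. Likewise, the piece built from $z_N-p_N$ can have norm $\sqrt2L\|z_N-p_N\|$ (take $A=0$, $F$ a rotation). So a bound of the form $2\|u'\|^2+2\|r\|^2$ would give coefficients $2$, not $\sqrt2$.

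\emph{The missing idea.} Compare at $p_N$ rather than at $\widehat z_N$, which is what the paper does. Set $h_p:=L(z_N-p_N)-F(z_N)\in A(p_N)$ and split
\[ F(\widehat z_N)+\widehat h=(F(p_N)+h_p)+r,\qquad r:=F(\widehat z_N)-F(p_N)-Ls-e. \]
Monotonicity and Lipschitz continuity on the pair $(z_N,p_N)$ give $\|F(p_N)+h_p\|\le\sqrt2L\|z_N-p_N\|$; this is your computation with $p_N$ in place of $\widehat z_N$. Firm nonexpansiveness is equivalent to $\|Ls+e\|^2\le\|e\|^2-L^2\|s\|^2$. On the pair $(p_N,\widehat z_N)$ only Lipschitz continuity is needed, giving $\|F(\widehat z_N)-F(p_N)\|\le L\|s\|$. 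Hence $\|r\|\le L\|s\|+\sqrt{\|e\|^2-L^2\|s\|^2}\le\sqrt2\|e\|$. The plain triangle inequality, not a squared splitting, then gives the lemma, with each $\sqrt2$ coming from its own piece.
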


\begin{proof}
The resolvent definitions give
\begin{align*}
 h_p&:=L(z_N-p_N)-F(z_N)\in A(p_N),\\
 h_{\widehat z}&:=L(z_N-\widehat z_N)-\overline F_N\in A(\widehat z_N).
\end{align*}
By monotonicity and Lipschitz continuity of $F$,
\begin{align*}
 \|F(p_N)+h_p\|^2
 &=\|L(z_N-p_N)-(F(z_N)-F(p_N))\|^2\\
 &\le2L^2\|z_N-p_N\|^2.
\end{align*}
Let $e:=\widehat z_N-p_N$ and $\delta:=\overline F_N-F(z_N)$.
Firm nonexpansiveness applied to the two resolvent inputs gives
\begin{align*}
 \|e\|^2\le-\frac1L\langle e,\delta\rangle.
\end{align*}
Therefore
\begin{align*}
 \left\|e+\frac\delta L\right\|^2
 \le\frac{\|\delta\|^2}{L^2}-\|e\|^2.
\end{align*}
Using Lipschitz continuity,
\begin{align*}
 &\|(F(\widehat z_N)+h_{\widehat z})-(F(p_N)+h_p)\|\\
 &\quad\le L\|e\|+L\left\|e+\frac\delta L\right\|\\
 &\quad\le\sqrt2\|\delta\|.
\end{align*}
Combining the last two bounds proves \eqref{rr:eq:output-residual}.
\end{proof}

\subsection{Final parameter choices}

Fix $0<\gamma\le1/3$ and $a_0\in(0,L)$.
For fixed $a_0$ and $\gamma$, increasing $Q$ decreases the stochastic term but slows the increase of $a_k$.
We therefore take the largest $Q$ allowed by \eqref{rr:eq:traversal-condition}:
\begin{align*}
 Q_{N,\gamma}(a_0)
 :=\frac{\frac{\gamma N}{3}-\left(\frac{L}{a_0}-1\right)}
 {\log\left(\frac{L}{a_0}\right)}.
\end{align*}
With $Q=Q_{N,\gamma}(a_0)$, Lemma~\ref{rr:lem:traversal} gives $a_N=L$.
Combining \eqref{rr:eq:fb-final}, Lemma~\ref{rr:lem:output-average}, and \eqref{rr:eq:output-residual} gives
\begin{align*}
 \left(\E[\mathcal R(\widehat z_N)^2]\right)^{1/2}
 \le&\sqrt2LD\left[
 2\frac{a_0}{L}
 +\left(\sqrt2-1+\frac{77}{32}\right)
 \left(\frac{a_0}{L}\right)^2
 \right]
 +\frac{\sqrt2\sigma}{\sqrt m}\\
 &+\sigma\sqrt{\frac{5}{2(2-3\gamma)(Q+1)}}
 \left(\log\frac{L}{a_0}+\sqrt2+\frac94\right),
\end{align*}
where $m=\lfloor\min\{Q,N\}\rfloor$.
This proves Theorem~\ref{rr:thm:main}.

For Corollary~\ref{rr:cor:explicit}, choose
\begin{align*}
 \gamma=\frac13,
 \qquad
 a_0=\frac{18L}{N}.
\end{align*}
Then
\begin{align*}
 Q_N
 =Q_{N,1/3}\left(\frac{18L}{N}\right)
 =\frac{\frac{N}{18}+1}{\log\left(\frac{N}{18}\right)}.
\end{align*}
For $N\ge19$, $N/18>1$ and
$0<\log(N/18)<N/18<N/18+1$, so $Q_N>1$.
Substituting these values into Theorem~\ref{rr:thm:main} gives
\begin{align*}
 \left(\E[\mathcal R(\widehat z_N)^2]\right)^{1/2}
 \le&\frac{36\sqrt2LD}{N}
 +\frac{324\sqrt2LD}{N^2}
 \left(\sqrt2-1+\frac{77}{32}\right)
 +\frac{\sqrt2\sigma}{\sqrt{m_N}}\\
 &+\sigma\sqrt{\frac{5}{2(Q_N+1)}}
 \left(\log\frac{N}{18}+\sqrt2+\frac94\right),
\end{align*}
where
\begin{align*}
 m_N=\lfloor\min\{Q_N,N\}\rfloor.
\end{align*}
Since $Q_N=\Theta(N/\log N)$ and $m_N=\Theta(N/\log N)$,
\begin{align*}
 \E[\mathcal R(\widehat z_N)^2]
 =O\left(\frac{L^2D^2}{N^2}+\frac{\sigma^2\log^3N}{N}\right).
\end{align*}

Finally, applying Corollary~\ref{rr:cor:equation} to the same concrete schedule gives the sharper equation-case last-iterate bound
\begin{align}
 \left(\E\|F(z_N)\|^2\right)^{1/2}
 \le\frac{36LD}{N}
 +\sigma\sqrt{\frac{5}{4(Q_N+1)}}
 \left(\log\frac{N}{18}+1\right).
 \label{rr:eq:equation-bound}
\end{align}

\subsection{Shape of the concrete schedule}\label{app:rrseg-schedule}

For the parameters in Corollary~\ref{rr:cor:explicit},
$a_k$ is nondecreasing and remains at most $L$.
Before it reaches $L$,
\begin{align*}
a_{k+1}-a_k
&= \frac{\frac13\eta_k a_k^2}
{1-\frac13\eta_k a_k}
= \frac{a_k^2}{9L+(9Q_N-1)a_k}.
\end{align*}
The right-hand side increases with $a_k$, so the regularization
parameter grows more rapidly as it approaches $L$.
Figure~\ref{fig:rrseg-schedule} illustrates this behavior for the
horizon used in the experiments.

\begin{figure}[H]
 \centering
 \includegraphics[width=0.8\linewidth]{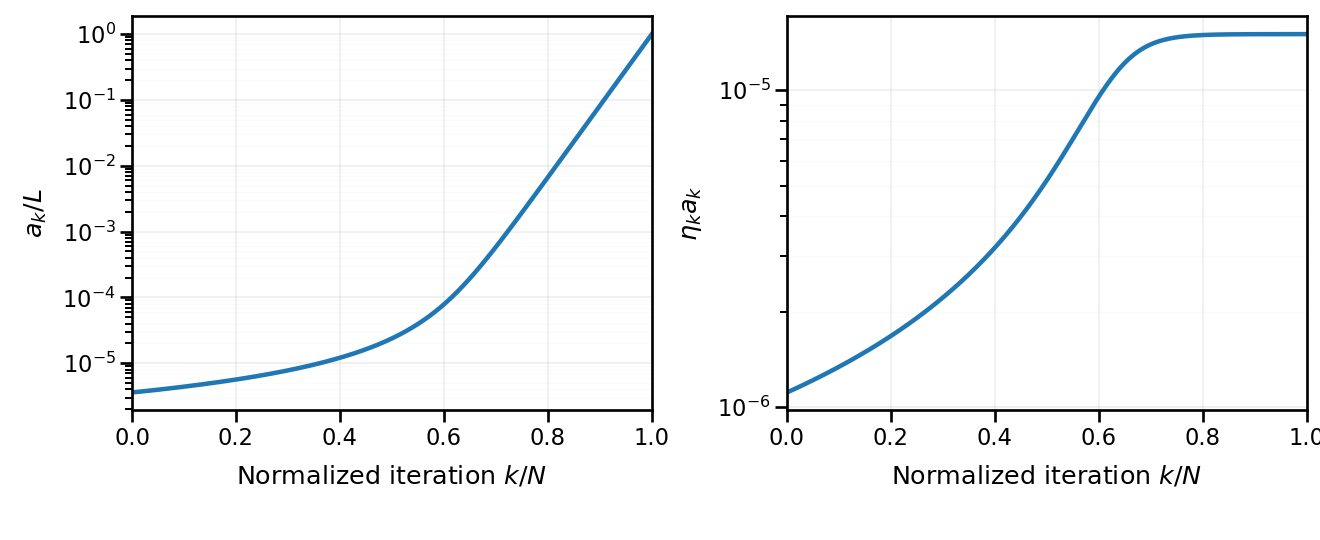}
 \caption{
 RRSEG parameters from Corollary~\ref{rr:cor:explicit} for
$N=5{,}000{,}000$, the horizon used for the solid RRSEG trajectories
in the experiments.
Left: $a_k/L$. Right: $\eta_k a_k$.
 }
 \label{fig:rrseg-schedule}
\end{figure}

%% file: appendices/experiments_full.tex
\subsection{Evaluation protocol and parameter choices}\label{app:experiment-protocol}
Problems~1 and~2 and rock-paper-scissors use $A=0$, while Problem~3 uses a nonzero full-domain nonsmooth operator. Each $F_\xi(z)$ evaluation counts as one oracle query; every method uses $50$ runs and at most $10^7$ oracle queries per run.

RRSEG uses Corollary~\ref{rr:cor:explicit} with the single fixed horizon $N=5{,}000{,}000$ on all four problems. Every solid RRSEG curve records the intermediate iterates $z_k$ from that run.
The stars in Figure~\ref{fig:main-experiments} use separate RRSEG runs with horizon $N=Q/2$ at $Q\in\{10^4, 10^5, 10^6, 10^7\}$ oracle queries. Problems~1 and~2 and rock-paper-scissors use the terminal iterate $z_N$, while Problem~3 uses the corrected output $\widehat{z}_N$.

On Problems~1 and~2 and rock-paper-scissors, BC-SEG+ uses $\gamma=1/(2L)$ and $\alpha_k=1/[18(k/100+1)]$.
On Problem~3, BC-PSEG+ uses $\gamma=1/(2L)$ and $\alpha_k=1/[18\sqrt{k/100+1}]$.
Stochastic GOMA is run as in Theorem~4 of \citet{sohrabi2026accelerated}.
Stochastic GOMA and RAIN-SL apply to equations, while the composite guarantee of E-Halpern is restricted to constrained problems; none is run on Problem~3.

For RAIN-SL, we use Algorithm~9 of \citet{chen2024nearoptimal}, the single-loop experimental variant, and tune $(\eta,\lambda,\gamma)$ over
\begin{align*}
 \eta&\in\{0.005,0.01,0.05,0.1,1,5,10\},\qquad
 \lambda\in\{0.001,0.01,0.1,1\},\qquad
 \gamma\in\{0.001,0.01,0.1,1\}.
\end{align*}
All candidates use the same tuning seed within each problem, disjoint from the evaluation seeds.
No candidate remains finite after $10^6$ or $10^5$ oracle queries, so we select the finite triple with the smallest squared residual after $12{,}000$ oracle queries.
The selected triple is $(0.05,0.001,0.001)$ for Problem~1 and rock-paper-scissors and $(0.1,0.001,0.001)$ for Problem~2.

For E-Halpern, we use Algorithm~2 of \citet{cai2022stochastic} and select $\epsilon$ using $50$ independent calibration runs.
Among $100$ logarithmically spaced values from $0.8\mathcal R(z_0)$ to $\max\{10^{-4}\mathcal R(z_0),10^{-6}\}$, we choose the smallest $\epsilon$ whose prescribed schedule uses at most $10^7$ oracle queries on average.
The selected values are $0.00413019$, $0.00922484$, and $0.02694452$ for Problems~1, 2, and rock-paper-scissors, respectively.
Each evaluation run stops before a batch would make the total exceed $10^7$ oracle queries.

\subsection{Plot construction}\label{app:experiment-plots}
For E-Halpern, the query count of a completed iterate can differ across runs; at each checkpoint we use the last completed iterate within that budget and plot it at the mean actual query count.

All fixed-query solid curves use the same logarithmic checkpoints, include the common initial point, and are obtained from one trajectory.

The dashed curves use Theorem~\ref{thm:inclusion} for VRAF and Theorem~4 of \citet{sohrabi2026accelerated} for stochastic GOMA.
Following the experiments of \citet{pethick2023solving}, the BC-SEG+ solid curves show $z_k$, while the BC-PSEG+ solid curve shows the resolvent point $\bar z_k$. Their dashed curves use Theorems~6.1 and~7.1, respectively, which bound the corresponding $\alpha_k$-weighted randomized outputs. For BC-PSEG+ on Problem~3, with $\alpha_0=1/18$, $\gamma=1/2$, $L_{\widehat F}=\sqrt{1.01}$, and $\rho=0$, the condition of Theorem~7.1 holds with $\eta\simeq8.8995$ and $\mu\simeq0.3133$.
For RRSEG, Problems~1 and~2 use the sharper $A=0$ last-iterate bound \eqref{rr:eq:equation-bound}, while Problem~3 uses Corollary~\ref{rr:cor:explicit}.
For Problems~1 and~2, at each horizon $N\ge19$ the sharper bound is
\begin{align*}
 \left(\E\|F(z_N)\|^2\right)^{1/2}
 \le\frac{36LD}{N}
 +\sigma\sqrt{\frac{5}{4(Q_N+1)}}
 \left(\log\frac{N}{18}+1\right),
 \qquad
 Q_N=\frac{N/18+1}{\log(N/18)}.
\end{align*}
The bounds are instantiated separately at each horizon and squared for the squared-residual axis.
These dashed RRSEG points are horizon-by-horizon theoretical guarantees; every empirical RRSEG solid curve comes from the single run described above.
The displayed BC-SEG+ and stochastic GOMA assumptions hold on Problems~1 and~2 with $L_{\widehat F}=1.2$ and $\kappa=1$, respectively.

\subsection{VRAF stepsize sensitivity}\label{app:vraf-stepsize-sensitivity}
To assess VRAF (practical), we multiply the theorem-prescribed $\alpha_k$ by a common $c\in\{1,2,4,8\}$ and leave all other coefficients unchanged. The same $c$ is used on every problem, and the four curves share the same $50$ seeds within each problem. Thus this is a sensitivity diagnostic rather than per-problem tuning; $c=1$ is VRAF and $c=4$ is VRAF (practical).
Since the VRAF schedule is initialized by $\alpha_0=7/(12\bar L)$ and the subsequent recurrence is multiplicative, multiplying all $\alpha_k$ by $c$ is equivalent to using $\bar L/c$ in the stepsize schedule. The experiment therefore also measures robustness to multiplicative misspecification of the Lipschitz scale used by VRAF.

\begin{figure}[H]
 \centering
 \includegraphics[width=.9\linewidth]{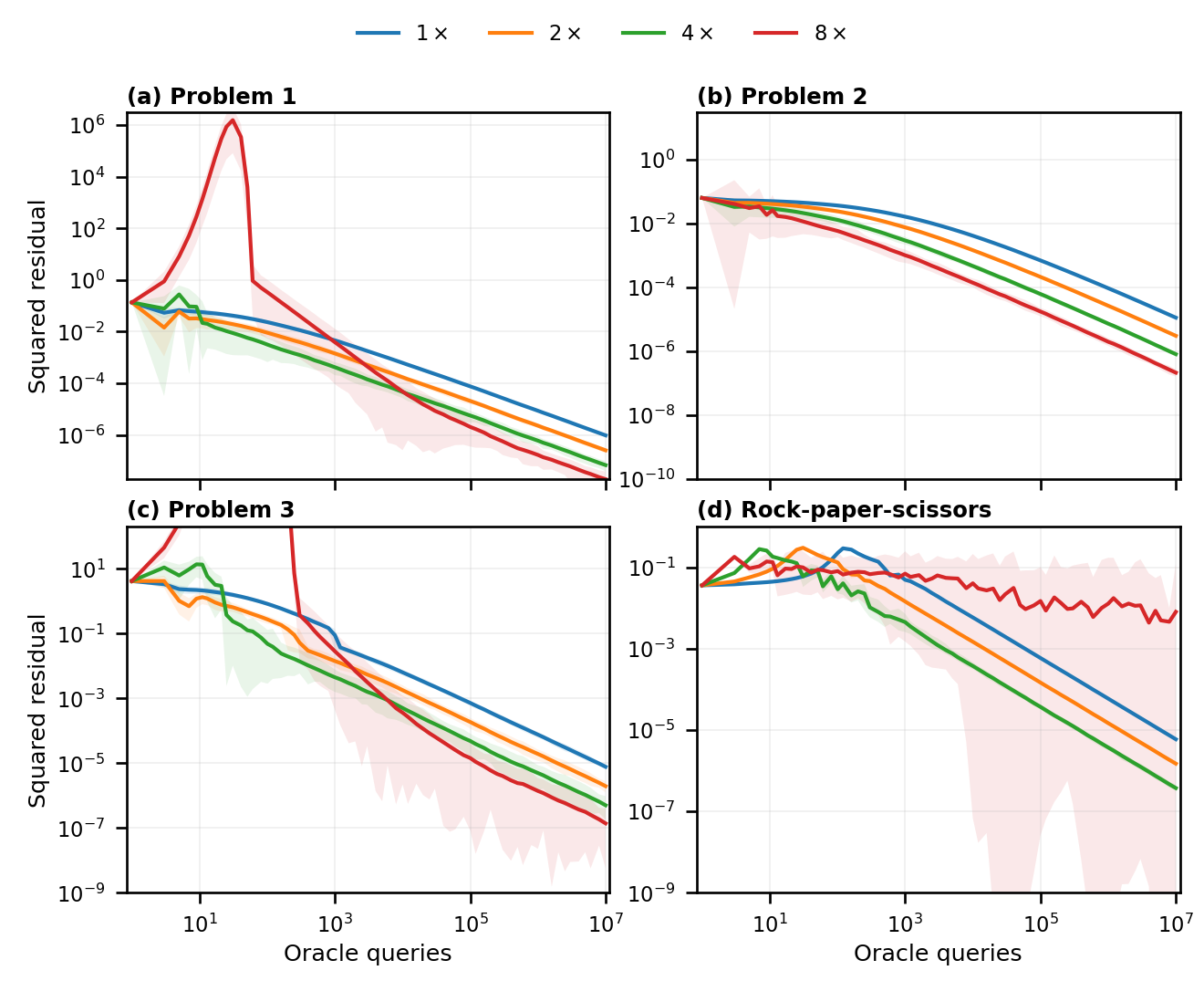}
 \caption{VRAF sensitivity to a common multiplier on $\alpha_k$. Curves show the mean over $50$ runs with pointwise $2.5\%$--$97.5\%$ quantiles.}
 \label{fig:vraf-stepsize-sensitivity}
\end{figure}

\subsection{Exact test problems and stochastic oracles}\label{app:experiment-problems}
For Problems~1 and~2, write $z=(z^{(1)},z^{(2)})$ and let $\phi(t)=t-\arctan t$. Both use $A=0$, are monotone and $1$-Lipschitz, and have $L_\Delta=0.2$ and $\bar L=\sqrt{1.04}$.

\emph{Problem 1.}
We use $z_\star=(0.4,0.6)$, $z_0=(0.95,0.05)$, and
\begin{align*}
 F(z)=\frac23\left(
 \phi\bigl(z^{(1)}-0.4\bigr)+\frac14\phi\bigl(z^{(1)}+z^{(2)}-1\bigr),\;
 z^{(2)}-0.6+\frac14\phi\bigl(z^{(1)}+z^{(2)}-1\bigr)
 \right).
\end{align*}
Here $\nabla F(z_\star)=\operatorname{diag}(0,2/3)$, so the first coordinate is locally flat.
The stochastic oracle is
\begin{align*}
 F_\xi(z)=F(z)
 &+0.1\bigl(\rho_1\tanh(z^{(1)}-0.4),\rho_2\tanh(z^{(2)}-0.6)\bigr)\\
 &+0.05\rho_3\tanh\bigl(z^{(1)}+z^{(2)}-1\bigr)(1,1)
 +0.08(\rho_4,\rho_5),
\end{align*}
where $\rho_1,\ldots,\rho_5$ are independent Rademacher variables; we use $\sigma^2=0.0378$.

\emph{Problem 2.}
We use
\begin{align*}
 F(z)&=\frac15\left(
 \phi(z^{(1)})+2\phi(z^{(1)}+z^{(2)}),\;
 \phi(z^{(2)})+2\phi(z^{(1)}+z^{(2)})
 \right),\\
 z_\star&=(0,0),
 & z_0&=(2,-2).
\end{align*}
Here $\nabla F(z_\star)=0$.
The stochastic oracle is
\begin{align*}
 F_\xi(z)=F(z)
 &+0.1\bigl(\rho_1\tanh z^{(1)},\rho_2\tanh z^{(2)}\bigr)\\
 &+0.05\rho_3\tanh(z^{(1)}+z^{(2)})(1,1)
 +0.08(\rho_4,\rho_5),
\end{align*}
where $\rho_1,\ldots,\rho_5$ are independent Rademacher variables; we use $\sigma^2=0.0378$.

\emph{Problem 3.}
Let $g(z)=\frac13|z^{(1)}|$, $A=\partial g$, and $\phi(t)=t-\arctan t$. We use
\begin{align*}
 F(z)
 &=\frac13\left(
 1+\phi(z^{(1)})+2z^{(2)},\;
 z^{(2)}-2z^{(1)}
 \right),\\
 z_\star&=(0,0),
 &z_0&=(2,-2).
\end{align*}
Since $F(z_\star)=(1/3,0)$ and $A(z_\star)=[-1/3,1/3]\times\{0\}$, $z_\star$ solves the inclusion. Moreover,
\begin{align*}
 \frac{\nabla F(z)+\nabla F(z)^\top}{2}
 &=\frac13\begin{pmatrix}
 \phi'(z^{(1)})&0\\
 0&1
 \end{pmatrix}\succeq0,
\end{align*}
Since $0\le\phi'(t)\le1$, $F$ is monotone and $\|\nabla F(z)\|\le(1+2)/3=1$. The inclusion is not locally strongly monotone: for $t>0$, the common selection $(1/3,0)\in A(t,0)\cap A(0,0)$ gives the monotonicity inner product $t\phi(t)/3=o(t^2)$. The resolvent is
\begin{align*}
 J_{\alpha A}(x,y)
 =\left(\operatorname{sign}(x)\max\{|x|-\alpha/3,0\},y\right),
\end{align*}
so $A$ has full domain and the RRSEG intermediate iterates have finite residuals. The stochastic oracle is
\begin{align*}
 F_\xi(z)=F(z)
 +0.1\bigl(\rho_1\tanh z^{(1)},\rho_2\tanh z^{(2)}\bigr)
 +0.35(\rho_3,\rho_4).
\end{align*}
Here $\rho_1,\ldots,\rho_4$ are independent Rademacher variables, $L_\Delta=0.1$, $\bar L=\sqrt{1.01}$, and $\sigma^2=0.265$. The residual is
\begin{align*}
 \mathcal R(z)^2
 =\operatorname{dist}\!\left(-F_1(z),\tfrac13\partial|z^{(1)}|\right)^2+F_2(z)^2.
\end{align*}

\emph{Rock-paper-scissors.}
Let
\begin{align*}
 M_{\rm RPS}
 &=
 \begin{pmatrix}
  0&-1&1\\
  1&0&-1\\
  -1&1&0
 \end{pmatrix},
 & p&=\operatorname{softmax}(x),
 & q&=\operatorname{softmax}(y),
\end{align*}
and define $F(x,y)=(\nabla_x\Psi(x,y),-\nabla_y\Psi(x,y))$ for $\Psi(x,y)=p^\top M_{\rm RPS}q$.
We use $x_\star=y_\star=0$, $x_0=(2,-1,-1)$, and $y_0=(-1,2,-1)$.
The stochastic oracle replaces $M_{\rm RPS}$ by
\begin{align*}
 M_\xi=M_{\rm RPS}+0.3\rho\,\operatorname{diag}(1,-1,0),
\end{align*}
where $\rho$ is Rademacher, and otherwise uses the same operator.
We use $L=0.5$, $L_\Delta=0.075$, and $\sigma^2=0.045$; the logit-space operator is nonmonotone.

\subsection{Rock-paper-scissors gap diagnostic}\label{app:rps-gap}
Softmax saturation can make the logit-space residual small even when the mixed strategies remain exploitable.
We therefore also report
\begin{align*}
 \operatorname{Gap}(p,q)
 &=\max_{q'\in\Delta_3}p^\top M_{\rm RPS}q'
 -\min_{p'\in\Delta_3}{p'}^\top M_{\rm RPS}q\\
 &=-\min_i(M_{\rm RPS}p)_i-\min_i(M_{\rm RPS}q)_i,
\end{align*}
which vanishes at the uniform Nash equilibrium.

\begin{figure}[H]
 \centering
 \includegraphics[width=0.62\linewidth]{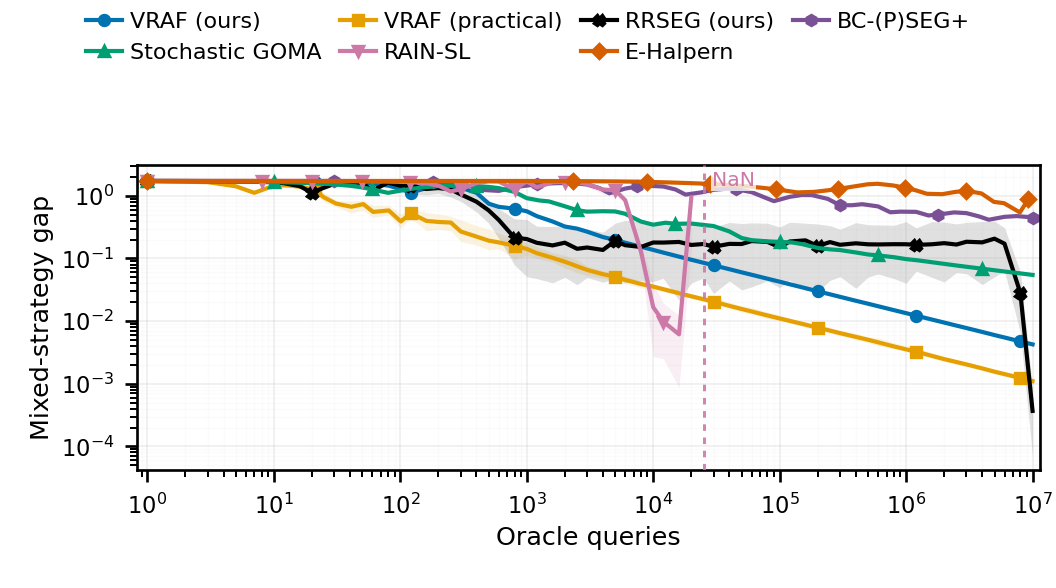}
 \caption{Mixed-strategy saddle-point gap for rock-paper-scissors versus the number of oracle queries. Solid curves and shaded regions use the same $50$ runs and quantile convention as Figure~\ref{fig:main-experiments}.}
 \label{fig:rps-gap}
\end{figure}

%% file: appendices/discussion_full.tex
\subsection{Anytime wrapper for RRSEG}\label{app:rrseg-anytime}
A standard doubling wrapper removes the need to specify a final horizon in advance.
Fix $N_0\ge 19$ and, for $s=0,1,\ldots$, run Corollary~\ref{rr:cor:explicit}
from $z_0$ with horizon
\[
N_s=2^sN_0.
\]
The runs are executed sequentially, and while the run with horizon $N_{s+1}$
is in progress, the method returns the terminal output of the completed run
with horizon $N_s$.

Suppose that a total of $T$ RRSEG iterations have been performed and that
the run with horizon $N_s$ is the most recently completed one. Then
\[
\sum_{j=0}^s N_j
\le T
<
\sum_{j=0}^{s+1}N_j
<
4N_s.
\]
Hence $N_s>T/4$. Applying Corollary~\ref{rr:cor:explicit} to the returned output gives
\[
\E[\mathcal R(\widehat z_{N_s})^2]
=
O\!\left(
\frac{L^2D^2}{T^2}
+
\frac{\sigma^2\log^3 T}{T}
\right).
\]
Each RRSEG iteration still uses two stochastic oracle queries, so the wrapper
remains fixed-query.

\subsection{VRAF need not converge pointwise}\label{app:discussion-vraf-pointwise}
The residual guarantee of VRAF does not imply convergence of its iterates.
The failure already occurs for a state-independent Gaussian oracle when every sampled operator is monotone.

\begin{proposition}[Pointwise failure of VRAF]\label{prop:vraf-pointwise-failure}
Let $\cH=\mathbb R$, $A=0$, $F\equiv0$, and $z_0=0$.
Use the valid Lipschitz upper bound $L=1$ and the state-independent Gaussian oracle
\begin{align*}
 F_\xi(z) = \xi,
 \qquad \xi\sim\mathcal N(0,\sigma^2),
 \qquad \sigma>0.
\end{align*}
Then Assumptions~\ref{ass:problem}, \ref{ass:same-sample}, and \ref{ass:noise-lipschitz} hold with $L_\Delta=0$,
and every sampled operator $F_\xi$ is monotone.
VRAF with the schedule of Theorem~\ref{thm:inclusion} satisfies
\begin{align*}
 \mathcal R(z_k)=0
 \qquad \text{for every }k,
\end{align*}
but $(z_k)$ does not converge in probability.
In particular, it does not converge strongly almost surely.
\end{proposition}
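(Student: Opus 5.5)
The plan is to verify the assumptions directly, observe that the residual vanishes identically, and then show that the VRAF iterates reduce to a linear Gaussian recursion that keeps injecting fresh noise of non-vanishing size over each dyadic-type window $[k,4k]$.

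\textbf{Assumptions and residual.} With $F\equiv0$ and $A=0$, the operator $F$ is monotone and $1$-Lipschitz, every point solves $0=F(z_\star)$, and we may take $z_\star=z_0=0$ with any $D>0$. The oracle noise $\delta_\xi(z)=\xi$ satisfies $\E\xi=0$ and $\E\xi^2=\sigma^2$. It does not depend on $z$, so Assumption~\ref{ass:noise-lipschitz} holds with $L_\Delta=0$, and same-sample access is trivially available. Each $F_\xi$ is constant, hence monotone. Since $\mathcal R(z)=|F(z)|=0$ everywhere, $\mathcal R(z_k)=0$ for all $k$.

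\textbf{Linear recursion.} Here $\barL=1$ and $z_0=0$. Writing $\xi_k$ for the $k$-th sample, VRAF becomes
\begin{align*}
 v_{k+1}=(1-\gamma_{k+1})v_k+\gamma_{k+1}\xi_{k+1},
 \qquad
 z_{k+1}=(1-\beta_k)z_k-\alpha_kv_k,
\end{align*}
with $v_0=\xi_0$. So $(z_k)$ is a jointly centered Gaussian sequence, and we can write $z_m=\sum_{j\le m-1}c_{j,m}\xi_j$ with deterministic coefficients.

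Unrolling the recursion, $\xi_j$ first enters $v_j$ with weight $\gamma_j$. This weight is then multiplied by $\prod_{i=j+1}^{l}(1-\gamma_i)$ to give its weight in $v_l$. Finally it enters $z_m$ through
\begin{align*}
 c_{j,m}=-\sum_{l=j}^{m-1}\alpha_l\,\gamma_j\prod_{i=j+1}^{l}(1-\gamma_i)\prod_{i=l+1}^{m-1}(1-\beta_i).
\end{align*}
Every term in this sum has the same sign, which gives a clean lower bound. The asymptotics are $\gamma_i\sim1/i$, $\alpha_l\asymp l^{-1/2}$ (Remark~\ref{rem:vraf-stepsize-decay}) and $\beta_i=3/(i+3)$. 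Hence $\prod_{i=j+1}^{l}(1-\gamma_i)\asymp j/l$ and $\prod_{i=l+1}^{m-1}(1-\beta_i)\asymp(l/m)^3$. Restricting to $k\le j\le 2k$, $m=4k$, and $l\in[2k,4k]$, every term has size $\gtrsim k^{-1/2}\cdot k^{-1}\cdot 1\cdot 1$, and there are about $2k$ of them. So $|c_{j,4k}|\ge c_0k^{-1/2}$ for some constant $c_0>0$ independent of $k$ for large $k$, and therefore
\begin{align*}
 \sum_{j=k+1}^{2k}c_{j,4k}^2\ge c_0^2>0.
\end{align*}

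\textbf{Non-convergence.} Split $z_{4k}-z_k=a_k+G_k$. Here $a_k$ is $\sigma(\xi_0,\dots,\xi_k)$-measurable. The term $G_k=\sum_{j=k+1}^{4k-1}c_{j,4k}\xi_j$ is independent of $a_k$ and is centered Gaussian with variance at least $c_0^2\sigma^2$.

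If $z_k$ converged in probability, then $z_{4k}-z_k\to0$ in probability. But conditioning on $a_k$ gives
\begin{align*}
 \Pp(|z_{4k}-z_k|<\varepsilon)\le\sup_a\Pp(|a+G_k|<\varepsilon)\le\frac{2\varepsilon}{\sqrt{2\pi}\,c_0\sigma},
\end{align*}
which is bounded away from $1$ for small $\varepsilon$. This is a contradiction. Almost sure convergence implies convergence in probability, so the last claim follows.

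\textbf{Main obstacle.} The main obstacle is the uniform lower bound on $|c_{j,4k}|$. It requires explicit two-sided product estimates for $\prod(1-\gamma_i)$ and $\prod(1-\beta_i)$. I would obtain these from $\log(1-x)\ge -x-x^2$ together with the exact forms $\gamma_{i}=\frac{4i+9}{4(i+2)(i+3)}$ and $\beta_i=3/(i+3)$, and combine them with the lower bound on $\alpha_l$ from Remark~\ref{rem:vraf-stepsize-decay}. This is tedious but elementary.
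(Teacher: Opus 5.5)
Your proposal is correct and follows essentially the same route as the paper. Both arguments expand $z_n$ as a same-sign linear combination of the independent Gaussians $\xi_i$. Both show that each $\xi_i$ in an index window of length $\Theta(k)$ enters a later iterate with coefficient of order $k^{-1/2}$, so the increment keeps variance bounded below given the past, and Gaussian anti-concentration rules out convergence in probability. The paper works with $X_{2k}-X_k$, taking $i\in[k,5k/4]$ and $j\in[3k/2,2k-1]$, while you work with $z_{4k}-z_k$.

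The product estimates you flag as the main obstacle are actually immediate, and this is how the paper handles them:
\begin{itemize}
\item The $\beta$-product telescopes exactly: $\prod_{i=l+1}^{m-1}(1-\beta_i)=\frac{(l+1)(l+2)(l+3)}{m(m+1)(m+2)}$.
\item The bound $1-\gamma_i\ge\frac{i}{i+1}$ gives $\prod_{i=j+1}^{l}(1-\gamma_i)\ge\frac{j+1}{l+1}$ by telescoping.
\end{itemize}
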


\begin{proof}
Write $X_k:=z_k$ and $W_k:=v_k$.
Since the same realization is used at $z_k$ and $z_{k+1}$,
the VRAF recursion becomes
\begin{align}
 X_{k+1} &= \frac{k}{k+3}X_k-\alpha_kW_k,
 &
 W_{k+1} &= (1-\gamma_{k+1})W_k+\gamma_{k+1}\xi_{k+1},
 \label{eq:vraf-pointwise-scalar}
\end{align}
with $W_0=\xi_0$.
Iterating the first recursion gives, for $n\ge1$,
\begin{align}
 X_n = - \frac{1}{n(n+1)(n+2)} \sum_{j=0}^{n-1}(j+1)(j+2)(j+3)\alpha_jW_j.
 \label{eq:vraf-pointwise-X-expansion}
\end{align}

For $m\ge1$, the schedule gives
\begin{align*}
 \gamma_m &= \frac{4m+9}{4(m+2)(m+3)},
 &
 1-\gamma_m &= \frac{(2m+3)(2m+5)}{4(m+2)(m+3)}.
\end{align*}
Hence
\begin{align}
 \gamma_m &\ge \frac{1}{2(m+3)},
 &
 1-\gamma_m &\ge \frac{m}{m+1}.
 \label{eq:vraf-pointwise-gamma-bounds}
\end{align}
For $1\le i\le j$, let $r_{j,i}$ be the coefficient of $\xi_i$ in $W_j$.
Equation~\eqref{eq:vraf-pointwise-scalar} and \eqref{eq:vraf-pointwise-gamma-bounds} give
\begin{align}
 r_{j,i}
 = \gamma_i\prod_{m=i+1}^{j}(1-\gamma_m)
 \ge \frac{i+1}{2(i+3)(j+1)}.
 \label{eq:vraf-pointwise-r-lower}
\end{align}

Let $q_{n,i}$ denote the coefficient of $\xi_i$ in $X_n$.  By \eqref{eq:vraf-pointwise-X-expansion},
\begin{align}
 |q_{n,i}|
 = \frac{1}{n(n+1)(n+2)} \sum_{j=i}^{n-1}(j+1)(j+2)(j+3)\alpha_jr_{j,i}.
 \label{eq:vraf-pointwise-q}
\end{align}
The Wallis estimate \eqref{vh:eq:scale-lower} and $\beta_j=3/(j+3)$ imply
\begin{align*}
 \alpha_j^2 \ge \frac{225\pi}{768(j+3)}.
\end{align*}
Set $c_\alpha:=\sqrt{75\pi/768}$.  For $k\ge6$, take
\begin{align*}
 k\le i\le\left\lfloor\frac{5k}{4}\right\rfloor,
 \qquad \left\lceil\frac{3k}{2}\right\rceil\le j\le2k-1.
\end{align*}
Then $j\ge i$, $\alpha_j\ge c_\alpha/\sqrt{k}$, and \eqref{eq:vraf-pointwise-r-lower} gives $r_{j,i}\ge1/(8k)$.
Moreover,
\begin{align*}
 \frac{(j+1)(j+2)(j+3)}{2k(2k+1)(2k+2)}\ge\frac18.
\end{align*}
There are at least $k/3$ admissible indices $j$.
Therefore \eqref{eq:vraf-pointwise-q} yields
\begin{align}
 |q_{2k,i}|\ge\frac{c_\alpha}{192\sqrt{k}}
 \qquad \left(k\le i\le\left\lfloor\frac{5k}{4}\right\rfloor\right).
 \label{eq:vraf-pointwise-q-lower}
\end{align}

Let $\mathcal F_k:=\sigma(\xi_0,\ldots,\xi_{k-1})$.
The random variable $X_k$ is $\mathcal F_k$-measurable, whereas the noises $\xi_i$ with $i\ge k$ are independent of $\mathcal F_k$.
Since all noises are Gaussian, $X_{2k}-X_k$ conditioned on $\mathcal F_k$ is Gaussian.
Equation~\eqref{eq:vraf-pointwise-q-lower} gives
\begin{align*}
 \operatorname{Var}(X_{2k}-X_k\mid\mathcal F_k)
 &\ge \sigma^2 \sum_{i=k}^{\lfloor5k/4\rfloor}q_{2k,i}^2\\
 &\ge \frac{c_\alpha^2}{4\cdot192^2}\sigma^2 =: \nu^2 > 0.
\end{align*}
For a Gaussian variable with variance at least $\nu^2$,
the probability of lying in an interval of length $\nu$ is maximized when its mean is the center of the interval.
Thus
\begin{align*}
 \mathbb P\left( |X_{2k}-X_k|\le\frac\nu2 \middle|\mathcal F_k \right)
 \le 2\Phi(1/2)-1<1,
\end{align*}
where $\Phi$ is the standard normal distribution function.
Hence $X_{2k}-X_k$ does not converge to zero in probability.
The sequence $(X_k)$ is not Cauchy in probability and therefore cannot converge in probability.

Finally, $F\equiv0$ and $A=0$, so every point is a solution and $\mathcal R(X_k)=0$ for every $k$.
\end{proof}

The obstruction is tangential stochastic motion along the solution set.
In the counterexample, the residual is identically zero, so no residual estimate can detect this motion.
Any pointwise-convergence result for VRAF therefore requires an additional mechanism or stronger assumptions.

%% file: references.bib
@inproceedings{alacaoglu2022stochastic,
  title = {Stochastic Variance Reduction for Variational Inequality Methods},
  author = {Alacaoglu, Ahmet and Malitsky, Yura},
  booktitle = {Conference on {{Learning Theory}}},
  pages = {778--816},
  year = 2022,
  publisher = {PMLR}
}

@article{alcala2023moving,
  title = {Moving Anchor Extragradient Methods for Smooth Structured Minimax Problems},
  author = {Alcala, James K and Chow, Yat Tin and Sunkula, Mahesh},
  journal = {arXiv preprint arXiv:2308.12359},
  year = 2023
}

@article{alcala2025stochastic,
  title = {Stochastic {{Moving Anchor Algorithms}} and a {{Popov}}'s {{Scheme}} with {{Moving Anchor}}},
  author = {Alcala, James and Chow, Yat Tin and Sunkula, Mahesh},
  journal = {arXiv preprint arXiv:2506.07290},
  year = 2025
}

@article{allen-zhu2018how,
  title = {How to Make the Gradients Small Stochastically: {{Even}} Faster Convex and Nonconvex Sgd},
  author = {{Allen-Zhu}, Zeyuan},
  journal = {Advances in Neural Information Processing Systems},
  volume = {31},
  year = 2018
}

@inproceedings{arjovsky2017wasserstein,
  title = {Wasserstein Generative Adversarial Networks},
  author = {Arjovsky, Martin and Chintala, Soumith and Bottou, L{\'e}on},
  booktitle = {International Conference on Machine Learning},
  pages = {214--223},
  year = 2017,
  publisher = {Pmlr}
}

@article{bassily2024private,
  title = {Private Algorithms for Stochastic Saddle Points and Variational Inequalities: {{Beyond}} Euclidean Geometry},
  author = {Bassily, Raef and Guzm{\'a}n, Crist{\'o}bal and Menart, Michael},
  journal = {Advances in Neural Information Processing Systems},
  volume = {37},
  pages = {128603--128635},
  year = 2024
}

@article{bot2026extragradient,
  title = {Extragradient {{Method}} with {{Flexible Anchoring}}: {{Strong Convergence}} and {{Fast Residual Decay}}},
  author = {Bo{\c t}, Radu I. and Chenchene, Enis},
  journal = {SIAM Journal on Optimization},
  volume = {36},
  number = {3},
  pages = {1420--1445},
  year = 2026
}

@article{cai2022stochastic,
  title = {Stochastic {{Halpern}} Iteration with Variance Reduction for Stochastic Monotone Inclusions},
  author = {Cai, Xufeng and Song, Chaobing and Guzm{\'a}n, Crist{\'o}bal and Diakonikolas, Jelena},
  journal = {Advances in Neural Information Processing Systems},
  volume = {35},
  pages = {24766--24779},
  year = 2022
}

@inproceedings{cai2023doubly,
  title = {Doubly Optimal No-Regret Learning in Monotone Games},
  author = {Cai, Yang and Zheng, Weiqiang},
  booktitle = {International {{Conference}} on {{Machine Learning}}},
  pages = {3507--3524},
  year = 2023,
  publisher = {PMLR}
}

@inproceedings{cai2024accelerated,
  title = {Accelerated {{Algorithms}} for {{Constrained Nonconvex-Nonconcave Min-Max Optimization}} and {{Comonotone Inclusion}}},
  author = {Cai, Yang and Oikonomou, Argyris and Zheng, Weiqiang},
  booktitle = {Proceedings of the 41st {{International Conference}} on {{Machine Learning}}},
  pages = {5312--5347},
  year = 2024,
  publisher = {PMLR}
}

@article{cai2026lastiterate,
  title = {Last-{{Iterate Convergence}} of {{Anchored Gradient Descent}}},
  author = {Cai, Yang and Zheng, Weiqiang},
  journal = {arXiv preprint arXiv:2604.12235},
  year = 2026
}

@article{chen2024nearoptimal,
  title = {Near-Optimal Algorithms for Making the Gradient Small in Stochastic Minimax Optimization},
  author = {Chen, Lesi and Luo, Luo},
  journal = {Journal of Machine Learning Research},
  volume = {25},
  number = {387},
  pages = {1--44},
  year = 2024
}

@article{chen2026unifying,
  title = {A {{Unifying View}} of {{Anchoring}} via {{Operator-Side Tikhonov Regularization}}},
  author = {Chen, Zihao},
  journal = {arXiv preprint arXiv:2605.30905},
  year = 2026
}

@article{cutkosky2019momentumbased,
  title = {Momentum-Based Variance Reduction in Non-Convex Sgd},
  author = {Cutkosky, Ashok and Orabona, Francesco},
  journal = {Advances in neural information processing systems},
  volume = {32},
  year = 2019
}

@inproceedings{diakonikolas2020halpern,
  title = {Halpern Iteration for Near-Optimal and Parameter-Free Monotone Inclusion and Strong Solutions to Variational Inequalities},
  author = {Diakonikolas, Jelena},
  booktitle = {Conference on Learning Theory},
  pages = {1428--1451},
  year = 2020,
  publisher = {PMLR}
}

@inproceedings{diakonikolas2021efficient,
  title = {Efficient {{Methods}} for {{Structured Nonconvex-Nonconcave Min-Max Optimization}}},
  author = {Diakonikolas, Jelena and Daskalakis, Constantinos and Jordan, Michael I.},
  booktitle = {Proceedings of {{The}} 24th {{International Conference}} on {{Artificial Intelligence}} and {{Statistics}}},
  pages = {2746--2754},
  year = 2021,
  publisher = {PMLR}
}

@inproceedings{foster2019complexity,
  title = {The Complexity of Making the Gradient Small in Stochastic Convex Optimization},
  author = {Foster, Dylan J. and Sekhari, Ayush and Shamir, Ohad and Srebro, Nathan and Sridharan, Karthik and Woodworth, Blake},
  booktitle = {Conference on {{Learning Theory}}},
  pages = {1319--1345},
  year = 2019,
  publisher = {PMLR}
}

@inproceedings{golowich2020last,
  title = {Last Iterate Is Slower than Averaged Iterate in Smooth Convex-Concave Saddle Point Problems},
  author = {Golowich, Noah and Pattathil, Sarath and Daskalakis, Constantinos and Ozdaglar, Asuman},
  booktitle = {Conference on {{Learning Theory}}},
  pages = {1758--1784},
  year = 2020,
  publisher = {PMLR}
}

@article{goodfellow2014generative,
  title = {Generative Adversarial Nets},
  author = {Goodfellow, Ian J. and {Pouget-Abadie}, Jean and Mirza, Mehdi and Xu, Bing and {Warde-Farley}, David and Ozair, Sherjil and Courville, Aaron and Bengio, Yoshua},
  journal = {Advances in neural information processing systems},
  volume = {27},
  year = 2014
}

@inproceedings{gorbunov2022extragradient,
  title = {Extragradient Method: {{O}} (1/k) Last-Iterate Convergence for Monotone Variational Inequalities and Connections with Cocoercivity},
  author = {Gorbunov, Eduard and Loizou, Nicolas and Gidel, Gauthier},
  booktitle = {International {{Conference}} on {{Artificial Intelligence}} and {{Statistics}}},
  pages = {366--402},
  year = 2022,
  publisher = {PMLR}
}

@article{gorbunov2022lastiterate,
  title = {Last-Iterate Convergence of Optimistic Gradient Method for Monotone Variational Inequalities},
  author = {Gorbunov, Eduard and Taylor, Adrien and Gidel, Gauthier},
  journal = {Advances in neural information processing systems},
  volume = {35},
  pages = {21858--21870},
  year = 2022
}

@inproceedings{gorbunov2023convergence,
  title = {Convergence of Proximal Point and Extragradient-Based Methods beyond Monotonicity: The Case of Negative Comonotonicity},
  author = {Gorbunov, Eduard and Taylor, Adrien and Horv{\'a}th, Samuel and Gidel, Gauthier},
  booktitle = {International {{Conference}} on {{Machine Learning}}},
  pages = {11614--11641},
  year = 2023,
  publisher = {PMLR}
}

@article{juditsky2011solving,
  title = {Solving {{Variational Inequalities}} with {{Stochastic Mirror-Prox Algorithm}}},
  author = {Juditsky, Anatoli and Nemirovski, Arkadi and Tauvel, Claire},
  journal = {Stochastic Systems},
  volume = {1},
  number = {1},
  pages = {17--58},
  year = 2011
}

@article{kim2021accelerated,
  title = {Accelerated Proximal Point Method for Maximally Monotone Operators},
  author = {Kim, Donghwan},
  journal = {Mathematical Programming},
  volume = {190},
  number = {1-2},
  pages = {57--87},
  year = 2021
}

@article{kim2026improving,
  title = {Improving the {{Last-Iterate Guarantees}} of {{Anytime Algorithms}} for {{Stochastic Monotone Variational Inequalities}}},
  author = {Kim, Jun-Hyun and Alacaoglu, Ahmet},
  journal = {arXiv preprint arXiv:2609.15257},
  year = 2026
}

@article{korpelevich1976extragradient,
  title = {The Extragradient Method for Finding Saddle Points and Other Problems},
  author = {Korpelevich, Galina M.},
  journal = {Matecon},
  volume = {12},
  pages = {747--756},
  year = 1976
}

@article{lan2012optimal,
  title = {An Optimal Method for Stochastic Composite Optimization},
  author = {Lan, Guanghui},
  journal = {Mathematical Programming},
  volume = {133},
  number = {1},
  pages = {365--397},
  year = 2012
}

@article{lee2021fast,
  title = {Fast Extra Gradient Methods for Smooth Structured Nonconvex-Nonconcave Minimax Problems},
  author = {Lee, Sucheol and Kim, Donghwan},
  journal = {Advances in Neural Information Processing Systems},
  volume = {34},
  pages = {22588--22600},
  year = 2021
}

@article{lee2022semianchored,
  title = {Semi-{{Anchored Multi-Step Gradient Descent Ascent Method}} for {{Structured Nonconvex-Nonconcave Composite Minimax Problems}}},
  author = {Lee, Sucheol and Kim, Donghwan},
  journal = {arXiv preprint arXiv:2105.15042},
  year = 2022
}

@inproceedings{li2021page,
  title = {{{PAGE}}: {{A Simple}} and {{Optimal Probabilistic Gradient Estimator}} for {{Nonconvex Optimization}}},
  author = {Li, Zhize and Bao, Hongyan and Zhang, Xiangliang and Richtarik, Peter},
  booktitle = {Proceedings of the 38th {{International Conference}} on {{Machine Learning}}},
  pages = {6286--6295},
  year = 2021,
  publisher = {PMLR}
}

@inproceedings{li2022rootsgd,
  title = {Root-Sgd: {{Sharp}} Nonasymptotics and Asymptotic Efficiency in a Single Algorithm},
  author = {Li, Chris Junchi and Mou, Wenlong and Wainwright, Martin and Jordan, Michael},
  booktitle = {Conference on {{Learning Theory}}},
  pages = {909--981},
  year = 2022,
  publisher = {PMLR}
}

@incollection{littman1994markov,
  title = {Markov Games as a Framework for Multi-Agent Reinforcement Learning},
  author = {Littman, Michael L.},
  booktitle = {Machine Learning Proceedings 1994},
  pages = {157--163},
  year = 1994,
  publisher = {Elsevier}
}

@inproceedings{mazumdar2025tractable,
  title = {Tractable Multi-Agent Reinforcement Learning through Behavioral Economics},
  author = {Mazumdar, Eric and Panaganti, Kishan and Shi, Laixi},
  booktitle = {International {{Conference}} on {{Learning Representations}}},
  volume = {2025},
  pages = {18058--18072},
  year = 2025
}

@inproceedings{munos2024nash,
  title = {Nash {{Learning}} from {{Human Feedback}}},
  author = {Munos, Remi and Valko, Michal and Calandriello, Daniele and Azar, Mohammad Gheshlaghi and Rowland, Mark and Guo, Zhaohan Daniel and Tang, Yunhao and Geist, Matthieu and Mesnard, Thomas and Fiegel, C{\^o}me and Michi, Andrea and Selvi, Marco and Girgin, Sertan and Momchev, Nikola and Bachem, Olivier and Mankowitz, Daniel J. and Precup, Doina and Piot, Bilal},
  booktitle = {Proceedings of the 41st {{International Conference}} on {{Machine Learning}}},
  pages = {36743--36768},
  year = 2024,
  publisher = {PMLR}
}

@article{ouyang2021lower,
  title = {Lower Complexity Bounds of First-Order Methods for Convex-Concave Bilinear Saddle-Point Problems},
  author = {Ouyang, Yuyuan and Xu, Yangyang},
  journal = {Mathematical Programming},
  volume = {185},
  number = {1-2},
  pages = {1--35},
  year = 2021
}

@inproceedings{park2022exact,
  title = {Exact Optimal Accelerated Complexity for Fixed-Point Iterations},
  author = {Park, Jisun and Ryu, Ernest K.},
  booktitle = {International {{Conference}} on {{Machine Learning}}},
  pages = {17420--17457},
  year = 2022,
  publisher = {PMLR}
}

@inproceedings{pethick2022escaping,
  title = {Escaping Limit Cycles: {{Global}} Convergence for Constrained Nonconvex-Nonconcave Minimax Problems},
  author = {Pethick, Thomas and Latafat, Puya and Patrinos, Panos and Fercoq, Olivier and Cevher, Volkan},
  booktitle = {International Conference on Learning Representations},
  year = 2022
}

@inproceedings{pethick2023solving,
  title = {Solving Stochastic Weak {{Minty}} Variational Inequalities without Increasing Batch Size},
  author = {Pethick, Thomas and Fercoq, Olivier and Latafat, Puya and Patrinos, Panagiotis and Cevher, Volkan},
  booktitle = {The Eleventh International Conference on Learning Representations},
  year = 2023
}

@article{popov1980modification,
  title = {A Modification of the {{Arrow-Hurwicz}} Method for Search of Saddle Points},
  author = {Popov, L. D.},
  journal = {Mathematical Notes of the Academy of Sciences of the USSR},
  volume = {28},
  number = {5},
  pages = {845--848},
  year = 1980
}

@article{rosen1965existence,
  title = {Existence and Uniqueness of Equilibrium Points for Concave N-Person Games},
  author = {Rosen, J. Ben},
  journal = {Econometrica: Journal of the Econometric Society},
  pages = {520--534},
  year = 1965,
  publisher = {JSTOR}
}

@article{ryu2020ode,
  title = {{{ODE Analysis}} of {{Stochastic Gradient Methods}} with {{Optimism}} and {{Anchoring}} for {{Minimax Problems}}},
  author = {Ryu, Ernest K. and Yuan, Kun and Yin, Wotao},
  journal = {arXiv preprint arXiv:1905.10899},
  year = 2020
}

@inproceedings{sohrabi2026accelerated,
  title = {Accelerated and {{Stable Convergence}} with {{Anchored Generalized Optimistic Method}}},
  author = {Sohrabi, Motahareh and You, Jianxin and {Lacoste-Julien}, Simon and Gorbunov, Eduard and Gidel, Gauthier},
  booktitle = {Forty-Third {{International Conference}} on {{Machine Learning}}},
  year = 2026
}

@article{song2020optimistic,
  title = {Optimistic {{Dual Extrapolation}} for {{Coherent Non-monotone Variational Inequalities}}},
  author = {Song, Chaobing and Zhou, Zhengyuan and Zhou, Yichao and Jiang, Yong and Ma, Yi},
  journal = {Advances in Neural Information Processing Systems},
  volume = {33},
  pages = {14303--14314},
  year = 2020
}

@article{suh2023continuoustime,
  title = {Continuous-Time Analysis of Anchor Acceleration},
  author = {Suh, Jaewook and Park, Jisun and Ryu, Ernest},
  journal = {Advances in Neural Information Processing Systems},
  volume = {36},
  pages = {32782--32866},
  year = 2023
}

@article{surina2026improved,
  title = {An {{Improved Last-Iterate Convergence Rate}} for {{Anchored Gradient Descent Ascent}}},
  author = {Surina, Anja and Suggala, Arun and Tsoukalas, George and Kovsharov, Anton and Shirobokov, Sergey and Ruiz, Francisco J. R. and Kohli, Pushmeet and Chaudhuri, Swarat},
  journal = {arXiv preprint arXiv:2604.03782},
  year = 2026
}

@article{tran-dinh2021halperntype,
  title = {Halpern-{{Type Accelerated}} and {{Splitting Algorithms For Monotone Inclusions}}},
  author = {{Tran-Dinh}, Quoc and Luo, Yang},
  journal = {arXiv preprint arXiv:2110.08150},
  year = 2021
}

@article{tseng2000modified,
  title = {A {{Modified Forward-Backward Splitting Method}} for {{Maximal Monotone Mappings}}},
  author = {Tseng, Paul},
  journal = {SIAM Journal on Control and Optimization},
  volume = {38},
  number = {2},
  pages = {431--446},
  year = 2000
}

@inproceedings{xu2025robust,
  title = {Robust {{LLM}} Alignment via Distributionally Robust Direct Preference Optimization},
  author = {Xu, Zaiyan and Vemuri, Sushil and Panaganti, Kishan and Kalathil, Dileep and Jain, Rahul and Ramachandran, Deepak},
  booktitle = {The Thirty-Ninth Annual Conference on Neural Information Processing Systems},
  year = 2025
}

@inproceedings{yoon2021accelerated,
  title = {Accelerated Algorithms for Smooth Convex-Concave Minimax Problems with {{O}} (1/K\textasciicircum{} 2) Rate on Squared Gradient Norm},
  author = {Yoon, TaeHo and Ryu, Ernest K.},
  booktitle = {International Conference on Machine Learning},
  pages = {12098--12109},
  year = 2021,
  publisher = {PMLR}
}

@article{yoon2025accelerated,
  title = {Accelerated {{Minimax Algorithms Flock Together}}},
  author = {Yoon, TaeHo and Ryu, Ernest K.},
  journal = {SIAM Journal on Optimization},
  volume = {35},
  number = {1},
  pages = {180--209},
  year = 2025
}

@article{yoon2026direct,
  title = {Direct {{Acceleration}} of {{Stochastic Root-Finding Without Variance Reduction}} and {{Regularization}}},
  author = {Yoon, TaeHo and Loizou, Nicolas},
  journal = {arXiv preprint arXiv:2608.12043},
  year = 2026
}
